\numberwithin{equation}{section}
\newtheorem{theorem}{Theorem}[section]
\newtheorem{lemma}[theorem]{Lemma}
\newtheorem{corollary}[theorem]{Corollary}
\newtheorem{proposition}[theorem]{Proposition}
\newtheorem{Definition}[theorem]{Definition}
\newtheorem{Rem}[theorem]{Remark}
\newenvironment{remark}{\begin{Rem}\rm}{\end{Rem}}
\newcommand\R{\mathbb{R}}
\newcommand\N{\mathbb{N}}
\newcommand\be{\begin{equation}}
\newcommand\ee{\end{equation}}
\newcommand\bea{\begin{eqnarray}}
\newcommand\eea{\end{eqnarray}}
\newcommand\beaa{\begin{eqnarray*}}
\newcommand\eeaa{\end{eqnarray*}}
\newcommand{\baa}{\begin{array}}
\newcommand{\eaa}{\end{array}}
\newcommand\beba{\begin{equation}\left\{\begin{array}{rcl}}
\newcommand\eeba{\end{array}\right.\end{equation}}
\newcommand\bebaa{\begin{equation*}\left\{\begin{array}{rcl}}
\newcommand\eebaa{\end{array}\right.\end{equation*}}
\newcommand\beca{\begin{equation}\left\{\begin{array}{rcll}}
\newcommand\eeca{\end{array}\right.\end{equation}}
\newcommand\becaa{\begin{equation*}\left\{\begin{array}{rcll}}
\newcommand\eecaa{\end{array}\right.\end{equation*}}
\newcommand\beda{\begin{equation}\left\{\begin{array}{l}}
\newcommand\eeda{\end{array}\right.\end{equation}}
\newcommand{\ep}{\varepsilon}
\def\epsilon{\varepsilon}
\def\tilde{\widetilde}
\begin{document}
\title[Positive entire solutions]{Localized and expanding entire solutions of reaction-diffusion equations}

\author{F. Hamel}
\address[F. Hamel]{Aix Marseille Univ, CNRS, Centrale Marseille, I2M, Marseille, France}
\email{francois.hamel@univ-amu.fr}

\author{H. Ninomiya}
\address[H. Ninomiya]{School of Interdisciplinary Mathematical Sciences, Meiji University
4-21-1 Nakano, Nakano-ku, Tokyo 164-8525, Japan}
\email{hirokazu.ninomiya@gmail.com}

\begin{abstract}
This paper is concerned with the spatio-temporal dynamics of nonnegative bounded entire solutions of some reaction-diffusion equations in $\R^N$ in any space dimension~$N$. The solutions are assumed to be localized in the past. Under certain conditions on the reaction term, the solutions are then proved to be time-independent or heteroclinic connections between different steady states. Furthermore, either they are localized uniformly in time, or they converge to a constant steady state and spread at large time. This result is then applied to some specific bistable-type reactions.
\end{abstract}


\maketitle

\begin{center}
{\it In memory of Genevi\`eve Raugel, with admiration and respect}
\end{center}

\vskip 0.4cm
{\small {\em Keywords:} Reaction-diffusion equations, entire solutions, extinction, propagation}


\section{Introduction and the main result}\label{sec:1}
\setcounter{equation}{0}

In this paper we are concerned with nonnegative bounded entire solutions of the following reaction-diffusion equation:
\be\label{eq:ACN}
u_t=\Delta u+f(u),\ \ t\in\R,\ \ x\in\R^N,
\ee
where $f:[0,+\infty)\to\R$ is a $C^1$ function such that
\be\label{hypf}
f(0)=0\ \hbox{ and }\ f'(0)<0.
\ee

The solutions are always understood in the classical sense $C^{1,2}_{t,x}(\R\times\R^N)$, from the parabolic regularity theory. Notice immediately that, for a nonnegative bounded solution $u$ of~\eqref{eq:ACN}, either $u(t,x)=0$ for all $(t,x)\in\R\times\R^N$, or $u(t,x)>0$ for all $(t,x)\in\R\times\R^N$, from the strong parabolic maximum principle and the uniqueness of the bounded solutions for the associated Cauchy problem.

The solutions $u$ are called {\it entire} as they are defined for all $t\in\R$ and $x\in\R^N$. The solution~$u$ of \eqref{eq:ACN} is called {\it bounded} if $u$ is an entire solution of~\eqref{eq:ACN} and 
\[
\sup_{(t,x)\in\R^{N+1}}|u(t,x)|<\infty.
\]
We are especially interested in the description of their limit profiles as $t\to\pm\infty$. If a solution~$u$ converges, in some sense to be made precise, to some limit states $\phi_\pm$ as $t\to\pm\infty$, then~$u$ is a heteroclinic connection between $\phi_-$ and $\phi_+$ if $\phi_-\neq\phi_+$, while it is homoclinic to $\phi_\pm$ if~$\phi_-=\phi_+$ (we will actually prove that the homoclinic connections reduce to time-independent solutions under the assumptions in this paper). The description and the pro\-perties of the entire solutions of~\eqref{eq:ACN} are of particular importance such as, for any element~$\varphi$ of the $\omega$-limit set of any nonnegative initial condition of the associated Cauchy problem giving rise to a bounded global solution, and for any $t_0\in\R$, there is a bounded entire solution $u$ of~\eqref{eq:ACN} such that~$u(t_0,\cdot)=\varphi$.


\subsection{Localized solutions in the past and localized steady states}

We are interested in solutions that are localized in the past, in the sense that
\be\label{localized}
u(t,x)\to0\ \hbox{ as }|x|\to+\infty,\ \hbox{ uniformly in }t\le0.
\ee
Throughout the paper, $x\mapsto|x|$ denotes the Euclidean norm in $\R^N$, $(x,y)\mapsto x\cdot y$ denotes the Euclidean inner product, $B(x,R)$ denotes the open Euclidean ball of center $x\in\R^N$ and radius $R>0$, and $B_R=B(0,R)$. Notice that the condition $f(0)=0$ is then forced by~\eqref{localized}. Furthermore, from standard parabolic estimates and the boundedness of $u$, condition~\eqref{localized} is equivalent to $\lim_{|x|\to+\infty}u(t,x)=0$ uniformly in $t\le t_0$ for some (or equivalently for all)~$t_0\in\R$. This, however, does not necessarily mean that $\lim_{|x|\to+\infty}u(t,x)=0$ uniformly in~$t\in\R$ (such solutions are called {\it uniformly localized}), and one of the main features of the paper is to show a dichotomy between the solutions that are uniformly localized and those that spread as $t\to+\infty$.

The description of the positive bounded solutions of~\eqref{eq:ACN} satisfying~\eqref{localized} is closely related to the study of the positive bounded localized steady states $\phi\in C^2(\R^N)$, solving
\be\label{steady}\left\{\baa{l}
\Delta\phi+f(\phi)=0\hbox{ and }\phi>0\hbox{ in }\R^N,\vspace{3pt}\\
\phi(x)\to0\hbox{ as }|x|\to+\infty.\eaa\right.
\ee
Under the condition $f'(0)<0$, it is known~\cite{GNN2,L,LN} that any solution $\phi$ of~\eqref{steady} is radially symmetric and decreasing with respect to its center, namely there exist a point $x_0\in\R^N$ and a~$C^2([0,+\infty))$ function $\Phi$ such that $\Phi'<0$ in $(0,+\infty)$ and
\be\label{profile}
\phi(x)=\Phi(|x-x_0|)\ \hbox{ for all }x\in\R^N.
\ee
It then follows from the strong maximum principle applied to $\phi$ that
\be\label{fPhi}
f(\Phi(0))=f\Big(\max_{\R^N}\phi\Big)>0,
\ee
hence, together with~\eqref{hypf}, there is a unique real number $m_\phi$ such that
\be\label{defmphi}
0<m_\phi<\max_{\R^N}\phi,\ \ f(m_\phi)=0\ \hbox{ and }\ f>0\hbox{ in }\Big(m_\phi,\max_{\R^N}\phi\Big].
\ee
Lastly, since
\be\label{eqradial}
\Phi''(r)+\frac{N-1}{r}\,\Phi'(r)+f(\Phi(r))=0\ \hbox{ for all }r\in(0,+\infty)
\ee
and $\Phi'(0)=\Phi'(+\infty)=0$ (the limit $\Phi'(+\infty)=0$ coming from~\eqref{steady}-\eqref{profile} and standard elliptic estimates), integrating the above equation against $\Phi'$ over $(0,+\infty)$ yields $F(\max_{\R^N}\!\phi)=F(\Phi(0))=0$ if $N=1$ and $F(\max_{\R^N}\!\phi)=F(\Phi(0))>0$ if $N\ge2$, where
$$F(s)=\int_0^sf(\sigma)\,d\sigma\ \hbox{ for $s\ge0$}.$$
Since $f(\max_{\R^N}\!\phi)>0$, there is then $\eta>0$ such that $f>0$ in $[\max_{\R^N}\!\phi-\eta,\max_{\R^N}\!\phi+\eta]$ and~$F>0$ in $(\max_{\R^N}\!\phi,\max_{\R^N}\!\phi+\eta]$ if $N=1$ (resp. in $[\max_{\R^N}\!\phi,\max_{\R^N}\!\phi+\eta]$ if $N\ge2$). In the sequel, we also set
\be\label{defMphi}
M_\phi=\inf\Big\{s\ge\max_{\R^N}\phi:f(s)=0\Big\}\in\Big(\!\max_{\R^N}\phi,+\infty\Big].
\ee
Notice that
\be\label{mMphi}
m_\phi<\max_{\R^N}\phi<M_\phi\ \hbox{ and }\ f>0\hbox{ in }(m_\phi,M_\phi),
\ee
and that $M_\phi$ may be equal to $+\infty$ (we refer to some specific examples in Section~\ref{sec:2}).

Furthermore, not only the steady states of~\eqref{steady} are radially symmetric and decreasing with respect to some center, but so are the bounded entire solutions of~\eqref{eq:ACN} which are localized in the past. Namely, it follows from~\cite{P1} that, for any positive bounded solution $u$ of~\eqref{eq:ACN}-\eqref{localized}, there is a point $x_0\in\R^N$ such that
$$\left\{\baa{ll}
u(t,x)=u(t,y) & \hbox{for all }(t,x,y)\in\R\times\R^N\times\R^N\hbox{ with }|x-x_0|=|y-x_0|,\vspace{3pt}\\
\nabla u(t,x)\cdot(x-x_0)<0 & \hbox{for all }(t,x)\in\R\times\R^N\hbox{ with }x\neq x_0.\eaa\right.$$


\subsection{The main result}

In the following theorem, which is the main result of the paper, we call $\mathcal E$ the set of $C^2(\R^N)$ solutions of~\eqref{steady} and, for any continuous bounded function $\varphi:\R^N\to\R$ and any set $\mathcal A$ of continuous bounded functions, we denote
$${\rm{dist}}(\varphi,\mathcal A)=\inf_{\psi\in\mathcal A}\|\varphi-\psi\|_{L^\infty(\R^N)}.$$

\begin{theorem}\label{th1}
Assume that $f$ satisfies~\eqref{hypf} and
\be\label{hypf2}
F<0\hbox{ in }(0,m_\phi]\ \hbox{ for all }\phi\in \mathcal E.
\ee
If there exists a positive bounded solution $u$ of~\eqref{eq:ACN} satisfying~\eqref{localized},
then $\mathcal E\neq\emptyset$ and
\be\label{conv1}
{\rm{dist}}(u(t,\cdot),\mathcal E)\to0\hbox{ as }t\to-\infty.
\ee
Furthermore,
\begin{enumerate}
\item either $u(t,\cdot)\to0$ uniformly in $\R^N$ as $t\to+\infty$,
\item or there is $\phi\in\mathcal E$ such that $u(t,\cdot)\to\phi$ uniformly in $\R^N$ as $t\to+\infty$,
\item or else there is a continuous function $\xi:\R\to\R$ depending on $u$ and some positive constants $M$ and $c$ only depending on $f$ such that
\be\label{defxit}\left\{\baa{l}
\displaystyle\limsup_{t\to+\infty}\Big(\max_{|x|\le\xi(t)-A}|u(t,x)-M|\Big)\to0\vspace{3pt}\\
\displaystyle\limsup_{t\to+\infty}\Big(\max_{|x|\ge\xi(t)+A}u(t,x)\Big)\to0\eaa\right.\hbox{ as }A\to+\infty
\ee
and
\be\label{spreading}
\lim_{t\to+\infty}\frac{\xi(t)}{t}=c,
\ee
with $f(M)=0$, $f'(M)\le0$ and $c$ characterized by the existence of a function $\varphi\in C^2(\R)$ solving
\be\label{eqvarphi}
\varphi''+c\varphi'+f(\varphi)=0\hbox{ in }\R,\ \ \varphi'<0\hbox{ in }\R,\ \ \varphi(-\infty)=M,\ \ \varphi(+\infty)=0.
\ee
\end{enumerate}
\end{theorem}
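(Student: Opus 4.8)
The plan is to combine a zero-number (intersection-counting) argument in the radial variable with a sliding/limiting analysis of the profiles, organized around the monotone structure that the hypotheses force on $u(t,\cdot)$. By the result of \cite{P1} quoted above, after translating we may assume $x_0=0$, so $u(t,x)=U(t,|x|)$ with $U_r(t,r)<0$ for all $t\in\R$ and $r>0$; thus the whole dynamics is governed by the one-dimensional radial equation $U_t=U_{rr}+\frac{N-1}{r}U_r+f(U)$. Since $u$ is positive, bounded, and localized in the past, parabolic estimates give compactness of the time-translates $u(t_n+\cdot,\cdot)$ in $C^{1,2}_{loc}$, and the localization \eqref{localized} is inherited uniformly along any backward sequence $t_n\to-\infty$. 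First I would show that every limit of such a backward sequence is a bounded entire solution that is localized \emph{for all} times, and then use an energy/Lyapunov functional argument (the functional $\int_{\R^N}\big(\tfrac12|\nabla v|^2-F(v)\big)\,dx$, finite because $v$ is localized and $F(0)=0$, $F'(0)=0$) to force any $\alpha$- or $\omega$-limit of such a limit solution to be a steady state; combined with \eqref{hypf2}, which rules out the trivial state as a backward limit by a strict-energy-sign argument (the localized profile has $F(\Phi(0))\ge 0$ whereas $F<0$ on $(0,m_\phi]$ prevents the solution from decaying to $0$ in the past), this yields $\mathcal E\neq\emptyset$ and the convergence \eqref{conv1}.

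For the trichotomy as $t\to+\infty$, I would track the height $h(t)=\max_{\R^N}u(t,\cdot)=U(t,0)$ and the ``radius'' at a fixed small level. The key monotonicity input is a zero-number argument: for any steady state $\psi\in\mathcal E$, the number of sign changes in $r$ of $U(t,r)-\Psi(r)$ is nonincreasing in $t$ and drops whenever there is a degenerate zero; since both $U(t,\cdot)$ and $\Psi$ are decreasing and tend to $0$, this zero number is finite and eventually constant, which confines the possible limit configurations. I expect three mutually exclusive scenarios depending on the behavior of $h(t)$: (i) if $\liminf_{t\to\infty}h(t)$ is below the lowest positive zero of $f$ in a suitable sense, an energy-dissipation estimate combined with the strict sign $F<0$ near $0$ forces $u(t,\cdot)\to0$ uniformly, giving case (1); (ii) if the support stays bounded, compactness plus the Lyapunov functional forces convergence to an element of $\mathcal E$, giving case (2); (iii) otherwise the support is unbounded and $h(t)$ stays bounded away from $0$, and one shows $h(t)\to M$ for some zero $M$ of $f$ with $f'(M)\le0$, by passing to the limit in translates centered at the ``interface'' $\xi(t)$ (defined, say, as the radius at level $M/2$) and using that the limit of $u(t,\cdot+ \xi(t)e)$ along the interface is a one-dimensional entire solution connecting $M$ to $0$, which by a standard classification must be the traveling front \eqref{eqvarphi}; the existence and uniqueness of the front speed $c$, together with the monotone front profile $\varphi$, then come from the classical theory for scalar equations with $f>0$ on $(m_\phi,M)$ and the sign conditions \eqref{mMphi}. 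The spreading law \eqref{spreading} follows from comparing $u$ with shifted sub- and super-fronts of speed $c\pm\epsilon$ once $h(t)$ is close to $M$, which also yields \eqref{defxit}.

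The main obstacle, I expect, is case (3): showing both that the interface has a well-defined asymptotic speed and that the ``plateau'' value $M$ is attained uniformly on $|x|\le\xi(t)-A$. The difficulty is that, a priori, the radial geometry means the curvature term $\frac{N-1}{r}U_r$ is not negligible until $\xi(t)\to+\infty$ fast enough, and one must rule out oscillation of $h(t)$ among several zeros of $f$ as well as ``stacked-front'' limits (terraces). The zero-number argument against a fixed steady state controls behavior near $0$ but not directly near a large plateau; the fix is to run the zero-number argument also against spatially shifted traveling fronts and against constants, and to use the strict monotonicity $U_r<0$ to prevent internal layers from forming. Quantifying the approach of $\xi(t)/t$ to $c$ — as opposed to a mere $\limsup/\liminf$ sandwich — will require the front uniqueness together with a careful trapping between translated fronts, which is where most of the technical work lies; the other two cases are comparatively soft, relying mainly on the energy functional and parabolic compactness.
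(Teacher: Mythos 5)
Your high-level scaffolding is on the right track: the reduction to a radially symmetric decreasing profile via Pol\'a\v cik's theorem, the use of a Lyapunov functional to force the $\alpha$-limit set to consist of steady states, and the sub/super-front comparison to pin down the spreading speed $c$ are all genuinely part of the paper's proof. But the middle portion of your plan — the part that sets up the trichotomy and proves~\eqref{defxit} — diverges from what the paper actually does, and as written has real gaps.

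First, your argument for why $0$ cannot lie in the $\alpha$-limit set (``strict-energy-sign argument'' from $F(\Phi(0))\ge 0$ versus $F<0$ on $(0,m_\phi]$) is not a proof; the $\alpha$-limit set consisting of steady states of a given Lyapunov level does not by itself preclude the level $0$. The paper rules out $0$ by a much more elementary ODE comparison: if $u(t_n,\cdot)\to 0$ uniformly for some $t_n\to-\infty$, then $u(t_n,\cdot)\le\eta$ for large $n$ with $f<0$ on $(0,\eta]$, so $u(t,\cdot)\le\zeta(t-t_n)$ with $\zeta'=f(\zeta)$, $\zeta(0)=\eta$, and letting $n\to\infty$ forces $u\equiv 0$, contradicting positivity.

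Second, and more seriously, your trichotomy based on $h(t)=\max_{\R^N}u(t,\cdot)$ and on whether ``the support stays bounded'' is not set up so as to be manifestly exhaustive and exclusive, and the proposed machinery — a zero-number argument in the radial variable against a fixed $\Psi\in\mathcal E$, plus passage to a one-dimensional limit at the interface and a ``standard classification'' of entire 1D connections as the traveling front — is not what the paper uses and is not obviously viable. Zero-number comparison with a localized $\Psi$ controls intersections near the origin but not the behavior on the expanding plateau, and the classification of bounded entire 1D solutions connecting $0$ to $M$ as being exactly the traveling front is itself a hard statement that the paper carefully avoids. What you are missing is the key structural ingredient that makes the paper's argument work: compactly supported radial steady states in large balls, obtained variationally (Lemma~\ref{lem2} via~\cite{BL1}). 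A small such state $\varphi$ in $B_{R_1}$ with $\varphi=m$ on $\partial B_{R_1}$ and $\min\varphi<\eta$ gives a clean dichotomy by sliding: either $u$ stays strictly below a translate of $\varphi$ for all time (leading to uniform localization and, via~\cite{BJP}, to cases~(i) or~(ii)), or it first touches a translate at some $t_0$, and then the Hopf-type estimate at the touching point produces a lower bound $u(t_0,\cdot)\ge m+\delta^2/(4M_2)$ on a ball $\overline{B(x_0,R_2)}$. A second such state $\psi$ in a large ball with $\max\psi>m$ then triggers invasion to $M$ (Lemma~\ref{lem3}) and yields the quantitative climbing estimate (Lemma~\ref{lem4}). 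The boundedness of the radial transition width~\eqref{defxit} is then proved by direct estimates on the level-set radius $\xi(t)$ (Lemmas~\ref{lem5} and~\ref{lem6}: eventual strict increase in steps of size $\tau_2$, and control of the forward oscillation over finite time windows), not by interface limits or Sturm theory. Your Step~5 idea — trapping $\xi(t)$ between sub/super-fronts moving at speeds close to $c$ — is correct in outline, but the construction needs the approximated one-dimensional fronts with speeds $\underline c_n\uparrow c$ and $\overline c_n\downarrow c$ built from perturbed nonlinearities (Lemma~\ref{lem1} and Remark~\ref{rem32}), not just the single front $\varphi$, because the curvature term $(N-1)/|x|$ forces a strict margin in speed.

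In short: your plan identifies the correct global ingredients and correctly locates the hard step, but it replaces the paper's core mechanism (variationally-built compactly supported steady states plus direct $\xi(t)$ estimates) by a zero-number/interface-limit strategy that is neither spelled out nor clearly sufficient, and the argument for excluding $0$ from the $\alpha$-limit set is wrong as stated.
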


Property~\eqref{conv1} means that the $\alpha$-limit sets, with respect to the uniform convergence in~$\R^N$, of the positive bounded solutions $u$ of \eqref{eq:ACN} consist of steady states solving~\eqref{steady}. As far as the behavior of a solution $u$ as $t\to+\infty$ is concerned, it turns out that, in both cases~(i) and~(ii), $u(t,x)\to0$ as $|x|\to+\infty$ uniformly in $t\in\R$, namely $u$ is then called {\it uniformly localized}. As a consequence, the conclusion of Theorem~\ref{th1} means there is a dichotomy between the uniformly localized solutions and the ones which converge locally uniformly to a positive constant, with a positive spreading rate. Notice that in all cases~(i),~(ii) and~(iii), the solution $u$ converges locally uniformly in $\R^N$ as $t\to+\infty$ to a steady state (either a necessarily non-constant solution of~\eqref{steady}, or the constants $0$ or $M$) and its $\omega$-limit set (with respect to the uniform convergence in cases~(i) and~(ii) and to the locally uniform convergence in case (iii)) is a singleton.

This situation is in contrast with some non-convergence and even non-quasiconvergence results of some positive bounded solutions of the Cauchy problems of the Fujita equation
\be\label{eqfujita}
u_t=\Delta u+u^p,
\ee
for which the $\omega$-limit set (with respect to the locally uniform convergence) of some initial conditions may not be reduced to a single steady state (non-convergence) or may even contain other elements than steady states (non-quasiconvergence). Such results have been proved in~\cite{P4,PY1,PY3} for~\eqref{eqfujita} in high dimensions $N$ for some ranges of values of $p$, even for solutions which are localized at large time (see also~\cite{ER,P2,P3} for further non-quasiconvergence results with non-localized oscillating initial conditions and bistable nonlinearities of the type~\eqref{cond-f} below). On the other hand, convergence or quasiconvergence hold for all functions~$f$ in dimension $N=1$ with compactly supported initial conditions~\cite{DM1} or for generic functions~$f$ in any dimension $N\ge1$ with initial conditions converging to $0$ at infinity~\cite{MP2,MP3}, while the existence of at least one steady state in bounded trajectories has been shown in dimensions~$N\le 2$~\cite{GS}. We refer to~\cite{MZ1,MZ2} for further convergence or quasiconvergence results for some bistable, ignition or monostable nonlinearities $f$ in any dimension $N\ge1$ with radially decreasing initial conditions, and to~\cite{HR1} for a general overview on convergence results for gradient-like parabolic or hyperbolic equations.

\begin{remark}
It actually follows from the proof of Theorem~\ref{th1}, in particular from Steps~4 and~5 in Section~\ref{sec42}, that a similar result as~\eqref{defxit}-\eqref{spreading} holds for the spreading solutions of the associated Cauchy problem with localized initial conditions. More precisely, let~$0<m<M$ be given, and let $f:[0,M]\to\R$ be a given $C^1([0,M])$ function such that~$f(0)=f(m)=f(M)=0$, $f'(0)<0$, $f>0$ in $(m,M)$, $F<0$ in $(0,m]$ and $F(M)>0$. Let $u_0:\R^N\to[0,M]$ be a continuous function such that $\lim_{|x|\to+\infty}u_0(x)=0$ and $u_0\not\equiv0$ in~$\R^N$. Now, if the bounded solution $u$ of the Cauchy problem associated with~\eqref{eq:ACN} with initial condition $u_0$ is assumed to be such that
$$u(t,\cdot)\to M\hbox{ as $t\to+\infty$ locally uniformly in $\R^N$},$$
then properties~\eqref{defxit}-\eqref{spreading} still hold for some continuous function $\xi:[0,+\infty)\to\R$, where~$c$ is characterized by the existence of a solution $\varphi\in C^2(\R)$ of~\eqref{eqvarphi}. Notice that pro\-perty~\eqref{defxit} implies in particular that, for each $0<\epsilon\le M/2$ and each unit vector~$e$, the diameter of the set $\{r\ge0:\epsilon\le u(t,re)\le M-\epsilon\}$ is bounded as~$t\to+\infty$ (see also the second paragraph after Remark~\ref{rem13}). Furthermore, the same conclusion holds if, instead of~$\lim_{|x|\to+\infty}u_0(x)=0$, one assumes that $\limsup_{|x|\to+\infty}u_0(x)\le\eta$, with $\eta>0$ such that~$f<0$ in $(0,\eta]$ (in that case, one has $\limsup_{|x|\to+\infty}u(t,x)\to0$ as~$t\to+\infty$).
\end{remark}


\subsection{Comments about the assumptions~\eqref{hypf} and~\eqref{hypf2} on $f$, and~\eqref{localized} on~$u$}

Let us make in the following paragraphs some comments about the role and necessity of the various assumptions on $f$ and $u$ used in Theorem~\ref{th1}.

Let us first discuss the linear stability assumption~\eqref{hypf} on $f$. Firstly, as already emphasized, the equality $f(0)=0$ is necessary for~\eqref{localized} to hold. Secondly, if $f'(0)=0$, then Theorem~\ref{th1} does not hold in general. For instance, in dimensions~$N\ge3$ and for $(N+2)/(N-2)<p<p_L$ with $p_L=(N-4)/(N-10)$ if $N\ge11$ and $p_L=\infty$ if $N\le 10$, the Fujita equation~\eqref{eqfujita} admits positive bounded entire solutions $u$ which are uniformly localized and are homoclinic to $0$, in the sense that~$\|u(t,\cdot)\|_{L^\infty(\R^N)}\to0$ as~$t\to\pm\infty$, see~\cite{FY}. Thirdly, if $f'(0)>0$, then Theorem~\ref{th1} does not make sense in general. Consider for instance a $C^2$ concave function~$f:[0,+\infty)\to\R$ such that $f(0)=0$, $f'(0)>0$ and~$f(b)=0$ for some $b>0$ (then, $f(s)<0$ for all $s>b$). Any nonnegative bounded entire solution $u$ of~\eqref{eq:ACN} and~\eqref{localized} necessarily satisfies $0\le u<b$ in $\R\times\R^N$ from the maximum principle. Furthermore, $\max_{\R^N}u(t,\cdot)<b$ for all $t$ negative enough (and then for all $t\in\R$), and it then follows from~\cite{HN} that $u(t,x)$ is a function of $t$ alone and~\eqref{localized} then yields $u\equiv 0$ in $\R\times\R^N$.

Let us now focus on condition~\eqref{hypf2} on $f$. Notice first that it implies that the map~$\phi\mapsto m_\phi$ is constant in $\mathcal E$. Indeed, for any $\phi,\phi'\in\mathcal E$, one has $F<0$ in $(0,m_{\phi'}]$ by~\eqref{hypf2}, while $F(\max_{\R^N}\!\phi)\ge0$ and $f(\max_{\R^N}\!\phi)>0$. Hence, owing to the definition~\eqref{defmphi} of $m_{\phi}$ and the fact that $f(m_{\phi'})=0$, one infers that $m_{\phi'}\le m_{\phi}$. Otherwise $m_\phi<\max_{\R^N}\!\phi<m_{\phi'}$ and thus~$F(\max_{\R^N}\!\phi)<0$, which is a contradiction. Finally $m_\phi=m_{\phi'}$ since $\phi$ and $\phi'$ are arbitrary in~$\mathcal E$.  As a consequence, one also gets that the function $\phi\mapsto M_\phi$ is constant in~$\mathcal{E}$, where the quantity $M_\phi$ has been defined in~\eqref{defMphi}.

\begin{remark}\label{rem13}
From the proof of Theorem~\ref{th1}, it turns out that, in case~(iii) of Theorem~\ref{th1}, one necessarily has $M_\phi<+\infty$ for any (and all) $\phi\in\mathcal E$ and
\be\label{MMphi}
M=M_\phi.
\ee
In particular, if the solution $u$ spreads, it can not converge to an intermediate state smaller than $M$ and the limit state $M$ does not depend on the solution $u$ itself. As a matter of fact, since $F<0$ in $(0,m_\phi]$, $m_\phi<\max_{\R^N}\!\phi<M_\phi$, $F(\max_{\R^N}\!\phi)\ge0$, $f>0$ in $(m_\phi,M_\phi)$ and~$f(M_\phi)=0$ for all $\phi\in\mathcal E$, one then infers that $M=M_\phi$ is the smallest zero of $f$ for which $F(M)>0$, that is,
\be\label{defM}
M=\min\big\{s\ge0: f(s)=0\hbox{ and }F(s)>0\big\}.
\ee
Therefore, $M$ only depends on the function $f$. Notice that this also implies that case~(iii) is ruled out if $f>0$ in $(m_\phi,+\infty)$ for any (and all) $\phi\in\mathcal E$, hence only cases~(i) or~(ii) may occur in this case.
\end{remark}

Together with~\eqref{hypf}, assumption~\eqref{hypf2} plays a key-role in the dichotomy results between the uniformly localized solutions and the solutions converging as $t\to+\infty$ to a positive constant with a positive spreading rate. Without the assumption~\eqref{hypf2}, the solutions $u$ of~\eqref{eq:ACN} may well converge locally uniformly in~$\R^N$ as~$t\to+\infty$ to a steady state $\phi$ such that~$\lim_{|x|\to+\infty}\phi(x)>0$ (such behaviors are known for the solutions of the associated Cauchy problem with localized initial conditions in dimension $N=1$~\cite{MP2,MP3} or with compactly supported initial conditions in dimensions $N\ge1$~\cite{DP}).

The assumption~\eqref{hypf2} is also essential in the proof of formula~\eqref{defxit} saying that, for spreading solutions, the transition between $M-\epsilon$ and $\epsilon$ (for any $0<\epsilon\le M/2$) has bounded width in any radial direction as $t\to+\infty$. This property refers to the notion of transition fronts (here, as $t\to+\infty$) introduced in~\cite{BH}. In other words, the assumption~\eqref{hypf2} prevents the existence of terraces made of stacked propagating fronts between the top value $M$ and the zero state. The existence and attractivity of radial terraces with radial positions $(0<)\ \xi_1(t)<\cdots<\xi_m(t)$ has been proved in~\cite{DM2} under an additional non-degeneracy assumption of all zeroes of $f$ in~$[0,M]$ (see also~\cite{DGM,FM,GM1,MP3} for further results on terraces for homogeneous or spatially periodic equations in dimension $N=1$,~\cite{R1,R2} for the existence of one-dimensional and radially symmetric terraces for gradient multistable systems,~\cite{P5,P6} for the existence of planar terraces for solutions with front-like initial data in~$\R^N$, and~\cite{GR} for the existence of terraces in spatially periodic equations in $\R^N$). The limit values of the ratios of $\xi_i(t)/t$ are also explicit, see~\cite{DM2}. Here, especially thanks to~\eqref{hypf2}, only a single radial layer can exist, and the asymptotic position $\xi(t)$ of that layer at large times is given in terms of the unique speed of a traveling front $\varphi$ connecting $0$ and $M$ for problem~\eqref{eqvarphi}. We point out that $M$ is asymptotically stable from below since $f$ is positive in a left neighborhood of $M$, but $M$ may not be linearly stable, in the sense that $f'(M)$ may vanish. Actually, formula~\eqref{spreading} is proved even if~$f'(M)=0$ (notice in particular that $f$ may not be monotone in a left neighborhood of~$M$)\footnote{On the other hand, if $f$ were assumed to be monotone, namely nonincreasing, in a left neighborhood of~$M$, then it would follow from~\cite{AW,R3} that $\max_{|x|\le c't}|u(t,x)-M|\to0$ and $\max_{|x|\ge c''t}u(t,x)\to0$ as $t\to+\infty$ for every $0\le c'<c<c''$. This in particular yields~\eqref{spreading}, namely $\lim_{t\to+\infty}\xi(t)/t=c$, but this does not show property~\eqref{defxit} on the boundedness of the radial width of the transition between~$0$ and~$M$ at large times.} and, as such, up to our knowledge, the spreading properties~\eqref{defxit}-\eqref{spreading} are new even in dimension $N=1$. The exact position of the layer~$\xi(t)$ and a quantitative estimate on the attractivity of the radial front with speed $c$ are not clear without the assumption~$f'(M)<0$ (see Remark~\ref{remf'M} below for the case $f'(M)<0$). However, if $x_0$ denotes the point with respect to which the considered solution $u$ is radially symmetric and decreasing, and if $a$ is any fixed real number in $(0,M)$, then one knows from~\eqref{spreading} that, for all $t$ large enough, there is a unique $\xi_a(t)\in\R$ such that $u(t,x_0+\xi_a(t)e)=a$ for all unit vectors $e$, and
$$\limsup_{t\to+\infty}|\xi_a(t)-\xi(t)|<+\infty.$$
It is reasonable to conjecture that $u(t,x_0+\xi_a(t)e+x)\to\varphi(x\cdot e+\varphi^{-1}(a))$ as $t\to+\infty$ locally uniformly in $x\in\R^N$ for any unit vector $e$, albeit the proof of this property would require different arguments from the ones used here.

\begin{remark}\label{remf'M}
In alternative~(iii) of Theorem~\ref{th1}, if $M$ is further assumed to be nondegenerate, meaning here that $f'(M)<0$, then there are $x_0\in\R^N$ and $\tau\in\R$ depending on $u$, such that
\be\label{convfront}
\sup_{x\in\R^N}\Big|u(t,x)-\varphi\Big(|x-x_0|-ct+\frac{N-1}{c}\ln t+\tau\Big)\Big|\to0\ \hbox{ as }t\to+\infty.
\ee
In particular, property~\eqref{defxit} then holds with $\xi(t)=ct-((N-1)/c)\ln t$ (say for $t\ge1$) if~$f'(M)<0$. Property~\eqref{convfront} makes the position $\xi(t)$ and the limit profile of $u$ in all directions exactly known up to an~$o(1)$ term as $t\to+\infty$. When $0$ and $M$ are nondegenerate and~$f$ has a single zero in the interval~$(0,M)$, formula~\eqref{convfront} for the solutions converging locally to~$M$ follows from~\cite[Corollary~2]{U} (see also~\cite{R4} for more precise estimates on the position of the front at large times, and~\cite{J} for earlier but less precise estimates). It is easily seen from~\cite{U} that the proof extends to the case when $0$ and $M$ are still nondegenerate and~$f$ has more than one zero in $(0,M)$, since the unique profile $\varphi$ given in~\eqref{eqvarphi}, with unique speed $c>0$, still satisfies $\varphi'<0$ in $\R$ and converges exponentially to $0$ and $M$ at $\pm\infty$.
\end{remark}

Finally, let us comment the assumption~\eqref{localized} on $u$. It is essential in the derivation of~\eqref{conv1} saying that the $\alpha$-limit set of the considered solutions is included in~$\mathcal E$. If, instead of~\eqref{localized}, one only assumes that~$u(t,x)\to 0$  as $|x|\to+\infty$ for each $t\le 0$ (and then, equivalently, for each $t\in\R$), but without any uniformity with respect to $t\le 0$, then the conclusion does not hold in general. For instance, consider any $b>0$ and a function $f$ which is of the bistable type on $[0,b]$, namely there is $a\in(0,b)$ such that
\be\label{cond-f}\left\{\baa{l}
f(0)=f(a)=f(b)=0,\ \ f'(0)<0,\ \ f'(b)<0,\ \ f'(a)>0,\vspace{3pt}\\
f<0\hbox{ in }(0,a),\ \ f>0\hbox{ in }(a,b).\eaa\right.
\ee
The equation~\eqref{eq:ACN} with $f$ satisfying~\eqref{cond-f} was originally proposed in~\cite{AC,NYA} and is accordingly often called the Allen-Cahn equation or the Nagumo equation. It arises in a wide variety of contexts such as phase transition, combustion, ecology and many models of biology. If $f$ satisfies~\eqref{cond-f} and if
\be\label{cond-f2}
\int_0^bf(s)ds>0,
\ee
then there are entire solutions $u:\R\times\R\to(0,b)$ of~\eqref{eq:ACN} in dimension $N=1$ that satisfy $\lim_{x\to\pm\infty}u(t,x)=0$ for each $t\in\R$ and behave as two further and further pulses as $t\to-\infty$, see~\cite{MP1}. Namely, there is a solution $\phi:\R\to(0,a']$ of~\eqref{steady}, where $a'\in(a,b)$ is such that~$F(a')=0$ and $\phi$ is the unique, up to shifts, solution of~\eqref{steady} ranging in $[0,b]$. The solutions $u$ constructed in~\cite{MP1} are such that
$$u(t,x)-\phi(-\xi(t))-\phi(\xi(t))\to0\ \hbox{ as }t\to-\infty\ \hbox{ uniformly in }x\in\R,$$
with $\lim_{t\to-\infty}\xi(t)=+\infty$. Therefore, these solutions $u$ do not satisfy~\eqref{conv1}. On the other hand, if $f$ satisfies~\eqref{cond-f} and
\be\label{cond-f3}
\int_0^bf(s)ds<0,
\ee
then there are entire solutions $u:\R\times\R\to(0,b)$ of~\eqref{eq:ACN} that satisfy $\lim_{x\to\pm\infty}u(t,x)=0$ for each $t\in\R$ and behave as two far fronts as $t\to-\infty$, see~\cite{GM2}. Namely, under~\eqref{cond-f} and~\eqref{cond-f3}, equation~\eqref{eq:ACN} admits a traveling front $\varphi(x-ct)$ with $c<0$ and such that~$\varphi:\R\to(0,b)$ is decreasing with $\varphi(-\infty)=b$ and $\varphi(+\infty)=0$, that is, $\varphi$ obeys~\eqref{eqvarphi} with $M=b$. The solutions $u$ constructed in~\cite{GM2} satisfy
$$u(t,x)-\varphi(x-ct)-\varphi(-x-ct)+b\to0\ \hbox{ as }t\to-\infty\hbox{ uniformly in }x\in\R,$$
and also $\|u(t,\cdot)\|_{L^\infty(\R)}\to0$ as $t\to+\infty$. Since in this case there is $b'\in(b,+\infty)$ such that~$F<0$ in $(0,b']$, equation~\eqref{steady} does not admit any solution $\phi:\R\to[0,b']$, and the solutions~$u$ then do not satisfy~\eqref{conv1}.

\begin{remark}\label{rem15}{\rm In Theorem~\ref{th1}, the assumption~\eqref{localized} can nevertheless be relaxed, still keeping the uniformity with respect to $t\le0$. More precisely, let any $\eta>0$ be such that $f<0$ in~$(0,\eta]$. Notice that such a real number $\eta$ exists by~\eqref{hypf}. It then turns out that Theorem~\ref{th1} holds if~\eqref{localized} is replaced by the assumption
\be\label{localized2}
\limsup_{|x|\to+\infty}\Big(\sup_{t\le0}u(t,x)\Big)\le\eta,
\ee
or $\limsup_{|x|\to+\infty}\big(\sup_{t\le t_0}u(t,x)\big)\le\eta$ for some $t_0\in\R$. As a matter of fact,~\eqref{localized2} implies (and is then equivalent to)~\eqref{localized}. Indeed, assume by way of contradiction that~\eqref{localized2} holds, but not~\eqref{localized}. Then there is a sequence $(t_n,x_n)_{n\in\N}$ in~$(-\infty,0]\times\R^N$ such that
$$\lim_{n\to+\infty}|x_n|=+\infty\ \hbox{ and }\ 0<\liminf_{n\to+\infty}u(t_n,x_n)\le\limsup_{n\to+\infty}u(t_n,x_n)\le\eta.$$
Furthermore, it follows from standard parabolic estimates that, up to extraction of a subsequence, the functions $u_n:(t,x)\mapsto u_n(t,x)=u(t+t_n,x+x_n)$ converge in $C^{1,2}_{t,x}$ locally in~$\R\times\R^N$ to a nonnegative bounded solution~$u_\infty$ of~\eqref{eq:ACN} such that $u_\infty(0,0)>0$ and~$0\le u_\infty\le\eta$ in~$(-\infty,0]\times\R^N$. Therefore, the maximum principle yields $u_\infty(t,x)\le\zeta(t+t_0)$ for all~$t_0>0$ and $(t,x)\in[-t_0,+\infty)\times\R^N$, where $\zeta$ obeys $\zeta(0)=\eta$ and $\zeta'(t)=f(\zeta(t))$ for all~$t\ge0$. In particular, $0<u_\infty(0,0)\le\zeta(t_0)$ for all $t_0>0$. But $\zeta(+\infty)=0$ since $f<0$ in $(0,\eta]$ and~$f(0)=0$, leading to a contradiction. Therefore,~\eqref{localized} could equivalently be replaced by~\eqref{localized2} in Theo\-rem~\ref{th1}, but we preferred to state Theo\-rem~\ref{th1} with~\eqref{localized} since this assumption is simpler to write and does not involve the  additional introduction of a quantity~$\eta$.}
\end{remark}


\subsection{Existence of monotone heteroclinic connections}

Let us now discuss the existence of heteroclinic connections between a steady state $\phi$ of~\eqref{steady} and the constant states~$0$ or~$M$.  The results described in the following paragraphs are quite standard and inspired by similar ones in bounded domains~\cite{M}. See also \cite[Theorem 1.2]{FMN}. We just sketch here the main steps of the proof of the existence of heteroclininc connections for the sake of completeness. So, consider any solution~$\phi$ of~\eqref{steady}. Since $\phi$ decays exponentially to $0$ at infinity (see also Section~\ref{sec41}), so do its first- and second-order partial derivatives, from standard elliptic estimates. In particular,~$\phi\in H^1(\R^N)$ and~$\phi_{x_i}\in H^1(\R^N)$ for each $1\le i\le N$. Since each first-order partial derivative $\phi_{x_i}$ changes sign and satisfies the equation $\Delta\phi_{x_i}+f'(\phi)\phi_{x_i}=0$ in $\R^N$, it follows that $\phi$ is a strictly unstable solution of~\eqref{steady}, in the sense that, for all~$R>0$ large enough, the principal eigenvalue $\lambda_R$ of the operator $-\Delta-f'(\phi)$ in the open Euclidean ball $B_R$ with center $0$ and radius $R$, with Dirichlet boundary condition on $\partial B_R$, is such that~$\lambda_R<0$. Let~$\varphi_R\in C^2(\overline{B_R})$ be a principal eigenfunction associated to this operator, namely
\be\label{eigenfunction}
-\Delta\varphi_R-f'(\phi)\varphi_R=\lambda_R\varphi_R\hbox{ in }B_R,\ \ \varphi_R>0\hbox{ in }B_R,\ \hbox{ and }\ \varphi_R=0\hbox{ on }\partial B_R.
\ee
Fix any $R>0$ large enough such that $\lambda_R<0$. There exists then $\epsilon^*>0$ such that, for all~$\epsilon\in(0,\epsilon^*)$, the $C^2(\overline{B_R})$ function $\phi_{R,\epsilon}:=\phi-\epsilon\varphi_R$ satisfies $0<\phi_{R,\epsilon}\le\phi$ in $\overline{B_R}$, $\phi_{R,\epsilon}=\phi$ on~$\partial B_R$ and
$$\Delta\phi_{R,\epsilon}+f(\phi_{R,\epsilon})<0\ \hbox{ in }B_R.$$
Denote $\widetilde{\phi}_{R,\epsilon}(x)=\phi_{R,\epsilon}(x)$ if $x\in\overline{B_R}$ and $\widetilde{\phi}_{R,\epsilon}(x)=\phi(x)$ if $x\in\R^N\!\setminus\!\overline{B_R}$. Thus, for any $\epsilon\in(0,\epsilon^*)$, the bounded and uniformly continuous function $\widetilde{\phi}_{R,\epsilon}$ is a generalized strict supersolution of~\eqref{steady}, and the solution $u^\epsilon$ of the Cauchy problem
\be\label{defueps}\left\{\baa{rcll}
u^\epsilon_t & \!\!=\!\! & \Delta u^\epsilon+f(u^\epsilon) & \hbox{in }(0,+\infty)\times\R^N,\vspace{3pt}\\
u^\epsilon(0,\cdot) & \!\!=\!\! & \widetilde{\phi}_{R,\epsilon} & \hbox{in }\R^N,\eaa\right.
\ee
is strictly decreasing in $t$. Furthermore, it is truly globally defined (in $[0,+\infty)\times\R^N$) and satisfies $0<u^\epsilon<\tilde{\phi}_{R,\epsilon}\le\phi$ in $(0,+\infty)\times\R^N$ from the parabolic maximum principle. The monotonicity in time and standard parabolic estimates yield the existence of a $C^2(\R^N)$ solution~$\phi'$ of $\Delta\phi'+f(\phi')=0$ in $\R^N$ such that $0\le\phi'<\phi$ in $\R^N$. Since by~\cite{BJP} any two solutions of~\eqref{steady} can not be ordered, it follows from the strong elliptic maximum principle that~$\phi'\equiv0$ in $\R^N$. Without loss of generality, one can assume that $\epsilon^*\varphi_R(0)<\phi(0)/2$. Hence, for any $\epsilon\in(0,\epsilon^*)$, one has $u^\epsilon(0,0)>\phi(0)/2$, and there is a time $t^\epsilon>0$ such that
$$u^\epsilon(t^\epsilon,0)=\frac{\phi(0)}{2}.$$
Notice that $t^\epsilon$ is unique since $u^\epsilon$ is continuous and decreasing in $t$. Furthermore, from the continuous dependence of the solutions of the Cauchy problem associated to~\eqref{eq:ACN} with respect to the initial data, and since $\phi$ is a steady state, one infers that $t^\epsilon\to+\infty$ as $\epsilon\to0$. Consider now a decreasing sequence $(\epsilon_n)_{n\in\N}$ in $(0,\epsilon^*)$ and converging to $0$, and define
$$u_n(t,x)=u^{\epsilon_n}(t+t^{\epsilon_n},x)\ \hbox{ for }(t,x)\in[-t^{\epsilon_n},+\infty)\times\R^N.$$
From the previous observations and standard parabolic estimates, the functions $u_n$ converge up to extraction of a subsequence in $C^{1,2}_{t,x}$ locally in $\R\times\R^N$ to a solution $u_{\infty}$ of~\eqref{eq:ACN}, such that $0\le u_\infty\le\phi$ and $(u_\infty)_t\le0$ in $\R\times\R^N$, together with $u_\infty(0,0)=\phi(0)/2$. Hence,
$$0<u_\infty<\phi\ \hbox{ in }\R\times\R^N$$
from the strong maximum principle, and there are two steady states $\phi_\pm\in C^2(\R^N)$ such that~$u_\infty(t,\cdot)\to\phi_\pm$ as $t\to\pm\infty$ locally uniformly in~$\R^N$ (and then uniformly since $0<u_\infty<\phi$ in~$\R\times\R^N$ and $\phi(x)\to0$ as $|x|\to+\infty$), with $0\le\phi_+\le\phi_-\le\phi$ and $\phi_+(0)\le\phi(0)/2\le\phi_-(0)$. The strong maximum principle and the non-existence of ordered solutions of~\eqref{steady} imply that~$\phi_-\equiv\phi$ and $\phi_+\equiv0$ in $\R^N$. Lastly, $(u_\infty)_t<0$ from the strong parabolic maximum principle applied to this function. In other words, the solution $u_\infty$ of~\eqref{eq:ACN} is a time-decreasing heteroclinic connection from $\phi$ to $0$.

Similarly, the function $\phi+\epsilon\varphi_R\in C^2(\overline{B_R})$ satisfies $\Delta(\phi+\epsilon\varphi_R)+f(\phi+\epsilon\varphi_R)>0$ in~$B_R$ for all $\epsilon>0$ small enough, with $R>0$ fixed large enough as above. Assume here that~$M_\phi$, defined in~\eqref{defMphi}, is a real number. Notice that the arguments of~\cite{BJP} based on the maximum principle and the sliding method imply that, for any classical positive solution $\phi'$ of $\Delta\phi'+f(\phi')=0$ in $\R^N$ such that $\phi'\ge\phi$, one has either $\phi'\equiv\phi$ in $\R^N$, or $\phi'\ge\max_{\R^N}\phi$ and then $\phi'\ge M_\phi$ in~$\R^N$ (remember that $f>0$ in $[\max_{\R^N}\!\phi,M_\phi)$). Therefore, it follows that, for all $\epsilon>0$ small enough, the solutions~$u^\epsilon$ of~\eqref{defueps}, with this time
\be\label{deftildephi}
\widetilde{\phi}_{R,\epsilon}=\left\{\baa{ll}
\phi+\epsilon\varphi_R & \hbox{in }\overline{B_R},\vspace{3pt}\\
\phi & \hbox{in }\R^N\!\setminus\!\overline{B_R},\eaa\right.
\ee
are increasing in time in $(0,+\infty)\times\R^N$, and satisfy $u^\epsilon(t,\cdot)\to M_\phi$ as $t\to+\infty$ locally uniformly in $\R^N$. Furthermore, since $\phi$ is radially symme\-tric (with respect to, say, the origin without loss of generality) and decreasing in $|x|$, with $\Delta\phi(0)=-f(\phi(0))=-f(\max_{\R^N}\!\phi)<0$, and since the principal eigenfunction $\varphi_R$ of~\eqref{eigenfunction} is itself radially symmetric by uniqueness, it follows that the functions $\widetilde{\phi}_{R,\epsilon}$ given in~\eqref{deftildephi} are also radially symmetric and decreasing in~$|x|$, for all~$\epsilon>0$ small enough. So are the functions $u^\epsilon(t,\cdot)$ for all $t>0$. With the same arguments as in the previous paragraph, by defining a time $t^\epsilon>0$ such that
$$u^\epsilon(t^\epsilon,0)=\frac{\phi(0)+M_\phi}{2},$$
one infers the existence of a time-increasing heteroclinic connection between~$\phi$ and~$M_\phi$ (the convergence to $M_\phi$ being this time only locally uniform in $\R^N$ as $t\to+\infty$).

For bistable functions $f$ of the type~\eqref{cond-f}-\eqref{cond-f2}, we also refer to~\cite{HR2,MN} for the existence of other time-increasing heteroclinic connections $u(t,x_1,x_2)=U(x_1,x_2-\gamma t)$ of~\eqref{eq:ACN}, for any $\gamma>0$ large enough, between the extended one-dimensional solution $\phi(x_1,x_2)=\phi(x_1)$ of~\eqref{steady} and the constant $b=M_\phi$, in the sense that $u(t,x_1,x_2)\to\phi(x_1)$ as $t\to-\infty$ uniformly in $x_1$ and locally uniformly in $x_2$, and $u(t,x_1,x_2)\to b$ as $t\to+\infty$ locally uniformly in $(x_1,x_2)$. These connections are however not localized, that is, they do not satisfy~\eqref{localized}: as a matter of fact, one has $\limsup_{|(x_1,x_2)|\to+\infty}u(t,x_1,x_2)=b>0$ for each $t\in\R$.

Assume now that $M_\phi=+\infty$. In that case, the solutions $u^\epsilon$ of the previous paragraphs can not stay bounded and therefore blow up in finite or infinite time, according to the behavior of $f(s)$ as $s\to+\infty$. As above, without loss of generality, for all $\epsilon>0$ small enough, the functions $u^\epsilon(t,\cdot)$ are radially symmetric and decreasing in $|x|$ for all $t$ in their interval $(0,T^\epsilon)$ of existence, hence~$u^\epsilon(t,0)\to+\infty$ as $t\to T^\epsilon$. For all $\epsilon>0$ small enough, there is then a time $t^\epsilon\in(0,T^\epsilon)$ such that
$$u^\epsilon(t^\epsilon,0)=\phi(0)+1,$$
and $t^\epsilon\to+\infty$ as $\epsilon\to0$. Therefore, using again the strong parabolic maximum principle, there is a time-increasing solution $u_\infty$ of~\eqref{eq:ACN}, defined now in $(-\infty,T)\times\R^N$ with $T\in(0,+\infty]$, such that $u_\infty(0,0)=\phi(0)+1$,~$u_\infty$ is radially symmetric and decreasing with respect to $|x|$, $u_\infty(t,\cdot)\to\phi$ uniformly in $\R^N$ as $t\to-\infty$, and $u_\infty(t,0)\to+\infty$ as $t\to T$. In other words, $u_\infty$ blows up at time $T$ (which may be finite or infinite, according to the function $f$). 

Lastly, we point out that, since any two solutions of~\eqref{steady} can not be ordered~\cite{BJP}, it follows that~\eqref{eq:ACN} can not have any time-monotone heteroclinic connection between two different solutions~$\phi_\pm$ of~\eqref{steady}. However, the existence of non-time-monotone heteroclinic connections is not a priori ruled out.

\begin{remark}
For any positive bounded solution $u$ of~\eqref{eq:ACN} satisfying~\eqref{localized}, the action
$$E[u(t,\cdot)]=\int_{\R^N}\Big(\frac{|\nabla u(t,x)|^2}{2}-F(u(t,x)\Big)\,dx$$
is well defined and it is a Lyapunov functional, that is, $t\mapsto E[u(t,\cdot)]$ is non-increasing in~$\R$ and even decreasing unless $u$ does not depend on $t$. We refer to Section~\ref{sec41} for more details. Notice that $E[\phi]>0=E[0]$ for any solution $\phi$ of~\eqref{steady} (this can be viewed as a consequence of the aforementioned existence of heteroclinic connections between~$\phi$ and~$0$). If $M_\phi$ is a real number and $u$ is a heteroclinic connection between $\phi$ and $M_\phi$, or more generally speaking in case~(iii) of Theorem~\ref{th1}, then $E[u(t,\cdot)]\to-\infty$ as $t\to+\infty$. If $u$ is a heteroclinic connection between two different solutions $\phi_\pm$ of~\eqref{steady} in the sense that $\|u(t,\cdot)-\phi_\pm\|_{L^\infty(\R^N)}\to0$ as~$t\to\pm\infty$ (this is a particular case of alternative~(ii) of Theorem~\ref{th1}), then
$$E[\phi_-]>E[\phi_+].$$
Lastly, if a positive bounded solution $u$ of~\eqref{eq:ACN} satisfying~\eqref{localized} does not converge to a single solution of~\eqref{steady} as $t\to-\infty$, in the sense that there are at least two different solutions $\phi$ and $\phi'$ of~\eqref{steady} such that
$$\|u(t_n,\cdot)-\phi\|_{L^\infty(\R^N)}\to0\ \hbox{ and }\ \|u(t'_n,\cdot)-\phi'\|_{L^\infty(\R^N)}\to0\ \hbox{ as }\ n\to+\infty$$
with $\lim_{n\to+\infty}t_n=\lim_{n\to+\infty}t'_n=-\infty$, then $E[\phi]=E[\phi']$. Notice that in that situation,~$\phi$ and~$\phi'$ are necessarily radially symmetric with respect to the same origin, since so is $u$, and by connectedness of the $\alpha$-limit set of $u$ there is then a continuum of such limit steady states as~$t\to-\infty$ in the $\alpha$-limit set of $u$, all having the same Lagrangian (we also refer to the discussion before Corollary~\ref{cor3} below). As a consequence, if the Lagrangian $E$ is one-to-one of the set the solutions of~\eqref{steady} which are symmetric with respect to the same point, then there is a single~$\phi\in\mathcal E$ such that~$\|u(t,\cdot)-\phi\|_{L^\infty(\R^N)}\to0$ as $t\to-\infty$.
\end{remark}


\section{Some corollaries and particular cases}\label{sec:2}
\setcounter{equation}{0}

In this section, we list some corollaries of Theorem~\ref{th1} which correspond to further assumptions or to some special cases. In particular, the conclusion~\eqref{conv1} will be made more precise under further assumptions.


\subsection{Dimension $N=1$}

The first corollary is concerned with the dimension $N=1$. In this case, the solutions of~\eqref{steady} are unique, up to shifts. Indeed, for any such solution~$\phi$, it follows from~\eqref{profile} and~\eqref{eqradial} that $F<0$ in $(0,\max_\R\phi)=(0,\Phi(0))$ and $\Phi'(r)=-\sqrt{-2F(\Phi(r))}$ for all $r\ge0$, hence the radial profile $\Phi$ is unique from the Cauchy-Lipshitz theorem. Furthermore,~$F(\Phi(0))=0$ and $f(\Phi(0))>0$. One can also infer from~\eqref{eqradial} that, if there is~$\beta\in(0,+\infty)$ such that $F<0$ in $(0,\beta)$ with $F(\beta)=0$ and $f(\beta)>0$ (notice that the hypotheses $F<0$ in~$(0,\beta)$ and $F(\beta)=0$ imply that $f(\beta)\ge0$), then~\eqref{steady} has a (unique up to shifts) solution.

\begin{corollary}\label{cor1}
Assume that $N=1$, that $f$ satisfies~\eqref{hypf} and that there is $\beta\in(0,+\infty)$ such that $F<0$ in $(0,\beta)$ with $F(\beta)=0$ and $f(\beta)>0$. Then $\mathcal E\neq\emptyset$ and, for any positive bounded solution~$u$ of~\eqref{eq:ACN} satisfying~\eqref{localized}, there is $\phi\in\mathcal E$ such that $\|u(t,\cdot)-\phi\|_{L^\infty(\R)}\to0$ as $t\to-\infty$. Furthermore, either $\|u(t,\cdot)\|_{L^\infty(\R)}\to0$ as $t\to+\infty$, or $u(t,x)\equiv\phi(x)$ in $\R\times\R$, or else the alternative~{\rm{(iii)}} of Theorem~$\ref{th1}$ holds.
\end{corollary}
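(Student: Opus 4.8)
The plan is to deduce Corollary~\ref{cor1} from Theorem~\ref{th1} by first checking that its hypotheses are satisfied in the present setting, and then using the one-dimensional rigidity of~\eqref{steady} to sharpen the conclusions~\eqref{conv1} and (ii). First I would record that $\mathcal E\neq\emptyset$: as noted in the paragraph preceding the statement, the conditions $F<0$ in $(0,\beta)$, $F(\beta)=0$, $f(\beta)>0$ produce, via the one-dimensional form of~\eqref{eqradial} and the Cauchy--Lipschitz theorem, a (unique up to translations) solution of~\eqref{steady}. Next I would verify~\eqref{hypf2}. For any $\phi\in\mathcal E$, writing $\phi(x)=\Phi(|x-x_0|)$ as in~\eqref{profile} and integrating $\Phi''+f(\Phi)=0$ against $\Phi'$ over $(r,+\infty)$, using $\Phi(+\infty)=\Phi'(+\infty)=0$, gives $\frac12\,\Phi'(r)^2=-F(\Phi(r))$ for all $r\ge0$; since $\Phi'<0$ on $(0,+\infty)$ this forces $F<0$ on $(0,\Phi(0))=(0,\max_\R\phi)$, hence on $(0,m_\phi]$ because $m_\phi<\max_\R\phi$ by~\eqref{defmphi}. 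Thus~\eqref{hypf2} holds and Theorem~\ref{th1} applies to any positive bounded solution $u$ of~\eqref{eq:ACN} satisfying~\eqref{localized}, yielding $\mathcal E\neq\emptyset$, property~\eqref{conv1}, and the trichotomy (i)--(iii) as $t\to+\infty$.

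The next step is to upgrade~\eqref{conv1} into convergence to a single element of $\mathcal E$. The two extra ingredients available in dimension one are that solutions of~\eqref{steady} are unique up to translation, and that by~\cite{P1} the solution $u(t,\cdot)$ is even about a $t$-independent point $x_0$ and strictly decreasing in $|x-x_0|$ for every $t$. I would show that the $\alpha$-limit set $\Omega_-$ of $u$ (nonempty, since $\{u(t,\cdot):t\le0\}$ is precompact in $L^\infty(\R)$ by parabolic estimates together with~\eqref{localized}) equals $\{\phi\}$ with $\phi(\cdot)=\Phi(|\cdot-x_0|)$: any $v\in\Omega_-$ is even about $x_0$, nonincreasing in $|x-x_0|$, and satisfies $\dist(v,\mathcal E)=0$ by~\eqref{conv1}; approximating $v$ in $L^\infty(\R)$ by translates $\Phi(|\cdot-y_n|)$ of the unique profile and comparing the maxima (both equal to $\Phi(0)$, attained respectively at $x_0$ and at $y_n$) forces $y_n\to x_0$, whence $v=\phi$. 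Since every subsequential limit is $\phi$, one gets $\|u(t,\cdot)-\phi\|_{L^\infty(\R)}\to0$ as $t\to-\infty$.

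For the behavior as $t\to+\infty$, alternative~(i) of Theorem~\ref{th1} is exactly $\|u(t,\cdot)\|_{L^\infty(\R)}\to0$ and alternative~(iii) is carried over verbatim, so it remains to treat alternative~(ii). There, $u(t,\cdot)\to\psi$ uniformly for some $\psi\in\mathcal E$; since $\psi$ is again even about $x_0$, the argument of the previous paragraph gives $\psi=\phi$, so $u$ is uniformly localized (as holds in cases (i)--(ii)) and homoclinic to $\phi$. I would then invoke the gradient structure recalled in Section~\ref{sec41} and the accompanying remark: $t\mapsto E[u(t,\cdot)]$ is nonincreasing and strictly decreasing unless $u$ is time-independent, yet $E[u(t,\cdot)]\to E[\phi]$ as $t\to\pm\infty$ by dominated convergence (uniform decay of $u(t,\cdot)$ and $\nabla u(t,\cdot)$), so $E$ is constant, $u_t\equiv0$, and hence $u(t,x)\equiv\phi(x)$ on $\R\times\R$ — the middle alternative of the corollary.

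The main obstacle is the step upgrading~\eqref{conv1} (distance to the whole set $\mathcal E$ tending to $0$) to genuine convergence to a single steady state; this is precisely where the one-dimensional uniqueness of~\eqref{steady} and the $t$-independent center of symmetry provided by~\cite{P1} are indispensable. By comparison, the verification of~\eqref{hypf2} is a short ODE computation, and the reduction of case~(ii) to a steady state uses only the Lyapunov functional already introduced in Section~\ref{sec:1}.
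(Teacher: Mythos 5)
Your proof is correct and follows essentially the same route as the paper's own argument in Section~5: verify~\eqref{hypf2} via the one-dimensional ODE identity $\tfrac12\Phi'(r)^2=-F(\Phi(r))$, invoke Theorem~\ref{th1}, use the one-dimensional uniqueness (up to shifts) of solutions of~\eqref{steady} together with the $t$-independent center of symmetry from~\cite{P1} to pin down a single $\phi\in\mathcal E$ as the $\alpha$-limit (and $\omega$-limit in case~(ii)), and finally apply the Lyapunov functional~\eqref{defE}--\eqref{lyapounov} to conclude $u_t\equiv0$ in case~(ii). The only difference is cosmetic: you spell out the verification of~\eqref{hypf2} and the identification of the $\alpha$-limit slightly more explicitly than the paper, which simply cites the paragraph preceding the corollary and the symmetry of $u$.
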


Corollary~\ref{cor1} easily follows from Theorem~\ref{th1}, the previous observations and the fact that the solutions $u$ are necessarily even in $x$ with respect to some real number. The fact that case~(ii) reduces to $u\equiv\phi$ is a consequence of the existence of a Lyapunov functional and the uniqueness of the solutions of~\eqref{steady} up to shifts. We refer to Section~\ref{sec5} for more details.


\subsection{Non-existence of positive bounded solutions}

In dimension $N=1$, with~\eqref{hypf}, the existence of a smallest positive root $\beta$ of $F$ with $f(\beta)>0$ is a necessary and sufficient condition for the existence of a solution of~\eqref{steady}. In dimension $N\ge 2$, any solution~$\phi$ of~\eqref{steady} satisfies $F(\max_{\R^N}\phi)>0$. Therefore, the next result immediately follows from Theorem~\ref{th1} and the strong maximum principle.

\begin{corollary}\label{cor2}
Assume that $f$ satisfies~\eqref{hypf}. In dimension $N=1$, if $F<0$ in $(0,+\infty)$ or if $F<0$ in $(0,\beta)$ with $F(\beta)=0$ and $f(\beta)=0$, then the only nonnegative bounded solution~$u$ of~\eqref{eq:ACN} satisfying~\eqref{localized} is the trivial solution $u\equiv 0$ in $\R\times\R$. In dimension $N\ge 2$, if $F\le 0$ in $[0,+\infty)$, then the same conclusion holds.
\end{corollary}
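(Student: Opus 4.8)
The plan is to apply Theorem~\ref{th1} directly, after checking that its hypothesis~\eqref{hypf2} is automatically satisfied (or vacuously true) under the assumptions on $F$, and then to rule out all three alternatives~(i)--(iii) except the trivial one, so that $u\equiv0$. I would split the argument according to dimension.

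First, consider $N\ge2$ with $F\le0$ in $[0,+\infty)$. I claim $\mathcal E=\emptyset$. Indeed, as recalled in the excerpt (just after~\eqref{eqradial}), any solution $\phi$ of~\eqref{steady} in dimension $N\ge2$ satisfies $F(\max_{\R^N}\phi)>0$, which contradicts $F\le0$ in $[0,+\infty)$. Since $\mathcal E=\emptyset$, hypothesis~\eqref{hypf2} holds vacuously. If there were a positive bounded solution $u$ of~\eqref{eq:ACN}-\eqref{localized}, Theorem~\ref{th1} would force $\mathcal E\neq\emptyset$, a contradiction. Hence the only nonnegative bounded solution satisfying~\eqref{localized} is $u\equiv0$, using the dichotomy from the strong parabolic maximum principle recalled in the introduction (either $u\equiv0$ or $u>0$ everywhere).

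Next, consider $N=1$. In the first sub-case $F<0$ in $(0,+\infty)$: for any solution $\phi$ of~\eqref{steady}, the one-dimensional analysis recalled before Corollary~\ref{cor1} gives $F(\Phi(0))=0$ with $\Phi(0)=\max_\R\phi>0$, contradicting $F<0$ on $(0,+\infty)$; so $\mathcal E=\emptyset$ and the same argument as above applies. In the second sub-case, $F<0$ in $(0,\beta)$ with $F(\beta)=0$ and $f(\beta)=0$: again I claim $\mathcal E=\emptyset$. Suppose $\phi\in\mathcal E$; then $\Phi(0)=\max_\R\phi$ is a positive zero of $F$ with $f(\Phi(0))=f(\max_\R\phi)>0$ by~\eqref{fPhi}. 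But any zero of $F$ in $(0,\beta)$ is excluded since $F<0$ there, and $\Phi(0)=\beta$ is excluded since $f(\beta)=0$ while $f(\Phi(0))>0$; and $\Phi(0)>\beta$ is excluded because, integrating~\eqref{eqradial} (with $N=1$) against $\Phi'$ on $(0,r)$, one gets $\frac12\Phi'(r)^2=F(\Phi(0))-F(\Phi(r))$, forcing $F<0$ on the whole range $(0,\Phi(0))$, in particular $F(\beta)<0$, contradicting $F(\beta)=0$. Hence $\mathcal E=\emptyset$, hypothesis~\eqref{hypf2} is vacuous, Theorem~\ref{th1} gives a contradiction to the existence of a positive solution, and so $u\equiv0$.

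The argument is essentially a bookkeeping verification rather than a deep one; the only point requiring a little care is the ``$\Phi(0)>\beta$'' exclusion in the one-dimensional case with $f(\beta)=0$, where one must invoke the first integral of the radial ODE to see that $F$ must be strictly negative on the entire interval $(0,\max_\R\phi)$, so that no interior zero of $F$ (such as $\beta$) can occur strictly below $\max_\R\phi$. Alternatively, and perhaps more cleanly, I would note that the hypotheses in each case make it impossible for a real number $\beta$ with $F<0$ on $(0,\beta)$, $F(\beta)=0$ and $f(\beta)>0$ to exist, so that by the converse statement recorded just before Corollary~\ref{cor1} (existence of such a $\beta$ is necessary and sufficient for $\mathcal E\neq\emptyset$ when $N=1$), one concludes $\mathcal E=\emptyset$ immediately. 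Either way, once $\mathcal E=\emptyset$ is established in every case, Theorem~\ref{th1} closes the argument with no further work.
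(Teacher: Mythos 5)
Your proposal is correct and follows essentially the same route as the paper: in each case you show $\mathcal E=\emptyset$ (using the one-dimensional first integral in $N=1$ and the sign of $F(\max\phi)$ in $N\ge2$) and then invoke Theorem~\ref{th1} together with the strong maximum principle to force $u\equiv0$. The paper is terser, appealing directly to the necessary-and-sufficient criterion for $N=1$ recorded just before Corollary~\ref{cor1}, which you also note as the cleaner alternative.
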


The assumptions of Corollary~\ref{cor2} are simple conditions ruling out the existence of solutions to~\eqref{steady}. These assumptions are however not optimal in dimensions $N\ge2$. For instance, if $N\ge3$, for any positive real numbers $\gamma$ and $\delta$ and for any $p\ge(N+2)/(N-2)$, the equation~\eqref{steady} with
\be\label{deff1}
f(s)=-\gamma s+\delta s^p
\ee
does not admit any solution~\cite{KP,P}. The same property holds with
\be\label{deff2}
f(s)=-\gamma s-\delta s^p+\eta s^q
\ee
with $N\ge3$, $\gamma>0$, $\delta>0$, $\eta>0$, and $1<p\le(N+2)/(N-2)\le q$ or $(N+2)/(N-2)<p<q$, see~\cite{KP,NS,P}. In these two examples, Theorem~\ref{th1} implies that the only nonnegative bounded solution $u$ of~\eqref{eq:ACN} satisfying~\eqref{localized} is then the trivial solution $u\equiv 0$ in $\R\times\R^N$.

On the other hand, much work has been devoted to the existence of solutions to~\eqref{steady} for some classes of functions $f$ satisfying~\eqref{hypf}, see the book~\cite{KP}. For instance, if $N\ge3$ and $\sup_{[0,+\infty)}F>0$ together with $\max(f(s),0)=o(s^{(N+2)/(N-2)})$ as $s\to+\infty$, then~\eqref{steady} admits solutions, see~\cite{BL1,BL2,S}. The existence holds in dimension $N=2$ if for instance $f$ satisfies~\eqref{cond-f}-\eqref{cond-f2}, see~\cite{BLP}, or if $\sup_{[0,+\infty)}F>0$ and $\max(f(s),0)=o(e^{\alpha\,s^2})$ as $s\to+\infty$ for all $\alpha>0$, see~\cite{BGK}.


\subsection{Discreteness or uniqueness up to shifts of the localized steady states}

In Theorem~\ref{th1}, property~\eqref{conv1} says that the solution $u$ is close to the family of steady states of~\eqref{steady} as $t\to-\infty$, that is, the $\alpha$-limit set of $u$ (with respect to the uniform convergence in~$\R^N$) consists of solutions of~\eqref{steady}, which turn out to be all symmetric with respect to a same point in $\R^N$, since so is~$u$. Any two different solutions of~\eqref{steady} can not be ordered by~\cite{BJP}, but it is not clear in general to know whether $u$ emanates from a single steady state or from a continuum of them (the possible existence of continua of solutions of~\eqref{steady} which are symmetric with respect to the same point is a difficult issue in general dimensions~$N\ge2$). However, under some further assumptions on the set of steady states, combined with the connectedness of the $\alpha$-limit sets of the solutions, one can be sure that a single state is selected.

\begin{corollary}\label{cor3}
Assume that $f$ satisfies~\eqref{hypf} and~\eqref{hypf2}, and that the set of solutions of~\eqref{steady} which are radially symmetric with respect to the origin is discrete.\footnote{This means that, for any radially symmetric solution $\phi$ of~\eqref{steady}, there is $\epsilon>0$ such that $\|\psi-\phi\|_{L^\infty(\R^N)}\ge\epsilon$ for every radially symmetric solution $\psi$ of~\eqref{steady}.} Let $u$ be a positive bounded solution of~\eqref{eq:ACN} satisfying~\eqref{localized}. Then there is $\phi\in\mathcal E$ such that $\|u(t,\cdot)-\phi\|_{L^\infty(\R^N)}\to0$ as~$t\to-\infty$. Furthermore,
\begin{enumerate}
\item either $u(t,\cdot)\to0$ uniformly in $\R^N$ as $t\to+\infty$,
\item or there is $\phi'\in\mathcal E$ such that $u(t,\cdot)\to\phi'$ uniformly in $\R^N$ as $t\to+\infty$, and, if $\phi=\phi'$, then $u(t,x)\equiv\phi(x)\equiv\phi'(x)$ in $\R\times\R^N$, 
\item or else the alternative~{\rm{(iii)}} of Theorem~$\ref{th1}$ holds.
\end{enumerate}
\end{corollary}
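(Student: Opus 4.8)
The plan is to derive everything from Theorem~\ref{th1} by upgrading the conclusion~\eqref{conv1} (convergence of $u(t,\cdot)$ to the \emph{set} $\mathcal E$ as $t\to-\infty$) into convergence to a \emph{single} element of $\mathcal E$, using the discreteness hypothesis together with the connectedness of the $\alpha$-limit set. First I would recall from the excerpt that $u$ is radially symmetric and strictly decreasing with respect to some fixed center $x_0\in\R^N$, by~\cite{P1}; without loss of generality we may translate so that $x_0$ is the origin. Consequently every element of the $\alpha$-limit set $\alpha(u)$ (with respect to uniform convergence in $\R^N$, which is the relevant topology here because $u$ is uniformly localized in the past by~\eqref{localized}) is itself radially symmetric with respect to the origin, and by~\eqref{conv1} every such element solves~\eqref{steady}. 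Hence $\alpha(u)$ is a subset of the set $\mathcal S$ of radially-symmetric-about-$0$ solutions of~\eqref{steady}, which is discrete by hypothesis.

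Next I would invoke the standard fact that $\alpha(u)$ is nonempty, compact and \emph{connected} in the uniform topology. Nonemptiness and compactness follow from the boundedness of $u$, the uniform localization~\eqref{localized}, and parabolic estimates (the orbit $\{u(t,\cdot):t\le0\}$ is precompact in $C(\R^N)$ with the sup norm, precisely because the decay at spatial infinity is uniform in $t\le0$ and interior parabolic estimates give equicontinuity on compacts). Connectedness is the usual argument: if $\alpha(u)$ were the disjoint union of two nonempty relatively open pieces, one could find a sequence of times $t_n\to-\infty$ along which $u(t_n,\cdot)$ oscillates between neighborhoods of the two pieces, and by continuity of the flow in $t$ it would have to pass through the gap, producing a limit point outside both pieces — a contradiction. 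A connected subset of a discrete set is a single point, so $\alpha(u)=\{\phi\}$ for some $\phi\in\mathcal S\subset\mathcal E$, which gives $\|u(t,\cdot)-\phi\|_{L^\infty(\R^N)}\to0$ as $t\to-\infty$ and in particular $\mathcal E\neq\emptyset$.

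For the trichotomy as $t\to+\infty$, cases~(i) and~(iii) are copied verbatim from Theorem~\ref{th1}. The only thing to add is the rigidity statement in case~(ii): if $u(t,\cdot)\to\phi'$ uniformly as $t\to+\infty$ with $\phi'\in\mathcal E$ and $\phi'=\phi$, then $u$ must be stationary. Here I would use the Lyapunov functional $E$ from the remark preceding Section~\ref{sec:2} (well defined and finite because $u$ is uniformly localized, by Section~\ref{sec41}): $t\mapsto E[u(t,\cdot)]$ is nonincreasing, and strictly decreasing unless $u$ is time-independent. Since $u(t,\cdot)\to\phi$ uniformly both as $t\to-\infty$ and as $t\to+\infty$, and $E$ is continuous with respect to uniform convergence on uniformly localized functions with uniformly bounded gradients (again via parabolic estimates), we get $E[u(t,\cdot)]\to E[\phi]$ at both ends, so $t\mapsto E[u(t,\cdot)]$ is constant, forcing $u$ to be independent of $t$; then $u(t,\cdot)\equiv\phi$ for all $t$.

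The main obstacle I anticipate is not any single step but making the topological bookkeeping airtight: one must be careful that the precompactness and connectedness of $\alpha(u)$ are established in the \emph{uniform} topology on $\R^N$ (not merely locally uniform), which is exactly where~\eqref{localized} and the uniform localization for $t\le0$ are used, and one must check that the elements of $\alpha(u)$ inherit radial symmetry about the \emph{same} origin as $u$ (immediate from~\cite{P1}, since the center $x_0$ is fixed along the whole trajectory). Everything else is a direct transcription of Theorem~\ref{th1} and the Lyapunov-functional remark.
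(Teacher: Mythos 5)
Your proposal is correct and follows essentially the same route as the paper's proof: use the fixed center of radial symmetry from~\cite{P1}, note that the $\alpha$-limit set of $u$ (in the uniform topology) is nonempty, connected, and contained in the discrete set of steady states radially symmetric about that center, hence a singleton $\{\phi\}$; then for the rigidity in case~(ii) when $\phi=\phi'$, use the Lyapunov functional $E$ and the equality of its limits at $t\to\pm\infty$ to force $u_t\equiv0$. The only cosmetic difference is that the paper cites its Section~\ref{sec41} (in particular~\eqref{Eutl},~\eqref{Ephil},~\eqref{Ephi}) for the precompactness and the limits of $E$, whereas you rederive these facts in passing, but the substance is identical.
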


In the particular case when the solutions $\phi$ of~\eqref{steady} are unique up to shifts, then $F\le0$ in~$[0,m_\phi]$ since otherwise, by applying the results of~\cite{BL2,BLP} to the function $f$ extended by~$0$ in~$(m_\phi,+\infty)$, there would exist other solutions $\phi'$ of~\eqref{steady} such that
$$\max_{\R^N}\phi'<m_\phi<\max_{\R^N}\phi.$$
Hence, condition~\eqref{hypf2} is necessarily fulfilled if the solutions of~\eqref{steady} are unique up to shifts. Therefore, since the bounded positive solutions $u$ of~\eqref{eq:ACN} satisfying~\eqref{localized} are necessarily radially symmetric and decreasing with respect to a single point in $\R^N$, the following corollary immediately holds.

\begin{corollary}\label{cor4}
Assume that $f$ satisfies~\eqref{hypf} and that the solutions of~\eqref{steady} exist and are unique up to shifts. Let $u$ be a positive bounded solution of~\eqref{eq:ACN} satisfying~\eqref{localized}. Then there is $\phi\in\mathcal E$ such that $\|u(t,\cdot)-\phi\|_{L^\infty(\R^N)}\to0$ as $t\to-\infty$. Moreover, either $\|u(t,\cdot)\|_{L^\infty(\R^N)}\to0$ as $t\to+\infty$, or $u(t,x)\equiv\phi(x)$ in $\R\times\R^N$, or else the alternative~{\rm{(iii)}} of Theorem~$\ref{th1}$ holds.
\end{corollary}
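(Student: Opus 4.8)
The proof of Corollary~\ref{cor4} is essentially a specialization of Corollary~\ref{cor3}, so the plan is to first reduce to the setting of Corollary~\ref{cor3} and then explain why case~(ii) collapses to $u\equiv\phi$. The first step is to verify that assumption~\eqref{hypf2} holds automatically when solutions of~\eqref{steady} are unique up to shifts. As indicated in the paragraph preceding the statement, if $F(s_0)\ge0$ for some $s_0\in(0,m_\phi]$, then one can extend $f$ by $0$ on $(m_\phi,+\infty)$ and apply the existence results of~\cite{BL2,BLP} (valid since $\sup F>0$ on the relevant range and the growth condition is trivially met for the truncated nonlinearity) to produce a solution $\phi'$ of~\eqref{steady} for the modified $f$ with $\max_{\R^N}\phi'\le m_\phi<\max_{\R^N}\phi$; since $f$ and its extension agree on $[0,m_\phi]$, $\phi'$ is also a genuine solution of~\eqref{steady}, and it cannot be a shift of $\phi$ because $\max_{\R^N}\phi'<\max_{\R^N}\phi$. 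This contradicts uniqueness up to shifts, so $F<0$ on $(0,m_\phi]$, i.e.~\eqref{hypf2} holds. (Some care is needed to state precisely the hypotheses under which~\cite{BL2,BLP} apply; this is the one genuinely delicate point, though it is entirely routine given those references.)

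Next, I would invoke the symmetry result from~\cite{P1} quoted in the introduction: any positive bounded solution $u$ of~\eqref{eq:ACN}--\eqref{localized} is radially symmetric and strictly radially decreasing about a single point $x_0\in\R^N$, for all $t\in\R$. In particular the same holds for every element of the $\alpha$-limit set of $u$, and uniqueness up to shifts of the solutions of~\eqref{steady} means that the set of solutions of~\eqref{steady} symmetric about the fixed point $x_0$ is a singleton, hence in particular discrete in the sense of the footnote to Corollary~\ref{cor3}. Thus all the hypotheses of Corollary~\ref{cor3} are satisfied, and that corollary yields a single $\phi\in\mathcal E$ with $\|u(t,\cdot)-\phi\|_{L^\infty(\R^N)}\to0$ as $t\to-\infty$, together with the trichotomy: either $u(t,\cdot)\to0$ uniformly, or $u(t,\cdot)\to\phi'$ uniformly for some $\phi'\in\mathcal E$, or alternative~(iii) of Theorem~\ref{th1} holds.

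It remains to treat case~(ii) and show $u\equiv\phi$. Here uniqueness up to shifts forces $\phi'$ to be a shift of $\phi$, and since both $\phi$ and $\phi'$ are the limits (as $t\to-\infty$ and $t\to+\infty$ respectively) of the solution $u$ which is symmetric about the single point $x_0$, both $\phi$ and $\phi'$ are symmetric about $x_0$, hence $\phi'=\phi$. Now one uses the Lyapunov functional $E$ described in the remark at the end of Section~1 (details in Section~\ref{sec41}): $t\mapsto E[u(t,\cdot)]$ is nonincreasing on $\R$, and strictly decreasing unless $u$ is time-independent. Since $u(t,\cdot)\to\phi$ uniformly both as $t\to-\infty$ and as $t\to+\infty$, and since $E$ is continuous with respect to the relevant convergence (using the uniform exponential decay of $u$ and its gradient, which makes $E$ well defined and continuous along the trajectory), one gets $E[u(t,\cdot)]\to E[\phi]$ at both ends, whence $E$ is constant along the trajectory; the strict monotonicity then forces $u_t\equiv0$, i.e.~$u$ is a steady state, and by~\eqref{localized} and uniqueness it must be $\phi$ itself. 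This gives $u(t,x)\equiv\phi(x)$ in $\R\times\R^N$, completing the proof. The main obstacle, such as it is, is the bookkeeping in the first step — correctly citing~\cite{BL2,BLP} to rule out small-amplitude solutions below $m_\phi$ — since everything else is a direct transcription of Corollary~\ref{cor3} plus the standard Lyapunov argument for gradient flows.
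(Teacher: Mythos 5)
Your proposal follows exactly the paper's route: the paragraph preceding Corollary~\ref{cor4} verifies~\eqref{hypf2} by the same truncate-and-apply-\cite{BL2,BLP} argument, and the paper then declares the corollary "immediate" from Corollary~\ref{cor3} once one notes that the solutions of~\eqref{eq:ACN}--\eqref{localized} are radially symmetric about a single point, so that uniqueness up to shifts makes the set of steady states symmetric about that point a singleton (hence discrete). Your handling of case~(ii) — identifying $\phi'$ with $\phi$ and then invoking the Lyapunov functional to force $u_t\equiv 0$ — is exactly the mechanism already built into case~(ii) of Corollary~\ref{cor3}, so you are re-deriving a step you could have simply cited, but this is harmless. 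One small point worth noticing (shared with the paper's own prose and hence not a defect of your write-up): the \cite{BL2,BLP} argument rules out $F(s_0)>0$ for $s_0\in(0,m_\phi]$, giving $F\le 0$ on $(0,m_\phi]$, whereas~\eqref{hypf2} asks for strict negativity; passing from $F\le 0$ to $F<0$ requires a brief extra remark (e.g.\ if $F(s_0)=0$ at an interior point then $f(s_0)=0$ and one must still exclude a ground state at level $s_0$), which neither the paper nor your proposal spells out, so your argument is faithful to the paper as written.
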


The existence and uniqueness up to shifts of the solutions of~\eqref{steady} is known for some classes of functions $f$. For instance, if $f$ satisfies~\eqref{hypf} and if there are $0<a<a'<b\le+\infty$ such that $f<0$ in $(0,a)$, $f>0$ in $(a,b)$, $F(a')=0$, $f\le0$ in $[b,+\infty)$ and
\be\label{fmono}
s\mapsto\frac{f(s)}{s-a'}\hbox{ is nonincreasing in }(a',b),
\ee
then there exists a unique up to shifts solution $\phi$ of~\eqref{steady}, see~\cite{FLS,PS} (in this case, $m_\phi=a$ and $M_\phi=b$). Notice that the monotonicity condition~\eqref{fmono} is especially fulfilled if $f$ is nonincreasing in $[a',b)$. The condition~\eqref{fmono} is not optimal for the uniqueness, since there are bistable functions $f$ satisfying the above conditions but~\eqref{fmono} for which the uniqueness up to shifts holds for~\eqref{steady} in any dimension $N\ge1$ (see especially the cubic functions~$f$ of the type~\eqref{example-f} below used in Corollary~\ref{cor7}). The uniqueness up to shifts of the solutions of~\eqref{steady} also holds if $f$ satisfies~\eqref{hypf} and if there is $a\in(0,+\infty)$ such that $f\le0$ in $[0,a]$,~$f>0$ in~$(a,+\infty)$ and $s\mapsto sf'(s)/f(s)$ is nonincreasing in $(a,+\infty)$, see~\cite{ASW,ST}. An example is the function $f$ given in~\eqref{deff1}, namely
$$f(s)=-\gamma s+\delta s^p,$$
with $\gamma>0$, $\delta>0$ and $p>1$. As a matter of fact, for that function, the existence and uniqueness up to shifts of a solution $\phi$ of~\eqref{steady} holds if and only if $N\le2$, or $N\ge3$ and~$1<p<(N+2)/(N-2)$, see also~\cite{BL1,BL2,BLP,CL,C,CEF,KP,K,Mc,MS,P,Y}. In this case, one has $m_\phi=(\gamma/\delta)^{1/(p-1)}$ and $M_\phi=+\infty$, and it follows from Corollary~\ref{cor4} that any positive bounded solution of~\eqref{eq:ACN} satisfying~\eqref{localized} is either independent of $t$ or converges to~$0$ uniformly in $\R^N$ as $t\to+\infty$. Another important example is that of functions $f$ of the type~\eqref{deff2}, namely
$$f(s)=-\gamma s-\delta s^p+\eta s^q,$$
with $\gamma>0$, $\delta>0$, $\eta>0$, $p\neq q$, and $\min(p,q)>1$. The uniqueness up to shifts of the solutions~$\phi$ of~\eqref{steady} holds in that case, and the existence holds if and only if $N\le 2$, or~$p<q<(N+2)/(N-2)$ with $N\ge3$ (in this case, $M_\phi=+\infty$ and the bounded solutions of~\eqref{eq:ACN} satisfying~\eqref{localized} are either independent of $t$ or converge to $0$ uniformly in~$\R^N$ as~$t\to+\infty$), or $p>q$ with $N\ge3$ and $\beta$ is small enough (in this case, $M_\phi<+\infty$), see~\cite{ST}.

On the other hand, without~\eqref{fmono} or the aforementioned conditions listed in the previous paragraph, some examples of non-uniqueness up to shifts in $\R^N$ with $N\ge2$ are known, for functions $f$ of the bistable type~\eqref{cond-f} with $f<0$ in $(b,+\infty)$, see~\cite{PS}, or for some functions $f$ having one single positive zero, see~\cite{P0} (notice that conditions~\eqref{hypf} and~\eqref{hypf2} are automatically fulfilled for the functions considered in~\cite{PS,P0}). In~\cite{DDG}, for functions~$f$ of the type $f(s)=-s+s^p+\lambda s^q$ with $\lambda>0$ large, $1<q<3$ and $p<5$ close to $5$ (conditions~\eqref{hypf} and~\eqref{hypf2} then hold), it was shown that~\eqref{steady} in dimension $N=3$ admits at least three solutions which are radially symmetric with respect to the origin. Furthermore, it is reasonable to conjecture from the proof given in~\cite{DDG} that the set of all such solutions is discrete, in which case Corollary~\ref{cor3} can be applied.


\subsection{Bistable and cubic functions $f$}

We complete this section by considering the class of bistable functions $f$ satisfying~\eqref{cond-f} for some $0<a<b$, namely
\be\label{cond-fbis}\left\{\baa{l}
f(0)=f(a)=f(b)=0,\ \ f'(0)<0,\ \ f'(b)<0,\ \ f'(a)>0,\vspace{3pt}\\
f<0\hbox{ in }(0,a),\ \ f>0\hbox{ in }(a,b),\eaa\right.
\ee
together with
\be\label{cond-f4}
f<0\ \hbox{ in }(b,+\infty).
\ee

On the one hand, if
\be\label{cond-f5}
\int_0^bf(s)\,ds\le0,
\ee
then $F<0$ in $(0,b)\cup(b,+\infty)$, hence Corollary~\ref{cor2} immediately yields the following result.

\begin{corollary}\label{cor5}
If $f$ satisfies~\eqref{cond-fbis}-\eqref{cond-f5}, then, in any dimension $N\ge1$, the only nonnegative bounded solution $u$ of~\eqref{eq:ACN} satisfying~\eqref{localized} is the trivial solution $u\equiv 0$ in $\R\times\R^N$.
\end{corollary}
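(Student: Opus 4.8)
The plan is to reduce Corollary~\ref{cor5} to Corollary~\ref{cor2}, which requires only verifying the sign conditions on $F$ and invoking the already-established non-existence result. So the first step is to record that a function $f$ satisfying~\eqref{cond-fbis}-\eqref{cond-f4} has exactly the structure needed: $f(0)=0$ and $f'(0)<0$, so~\eqref{hypf} holds and the hypotheses of Corollary~\ref{cor2} are at least partly met.

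The substantive point is to show that~\eqref{cond-f5} forces $F<0$ on $(0,+\infty)\setminus\{b\}$, i.e. $F<0$ on $(0,b)$ and on $(b,+\infty)$. For the interval $(0,b)$: since $f<0$ in $(0,a)$, the function $F(s)=\int_0^s f$ is strictly decreasing on $[0,a]$, hence $F<0$ there; since $f>0$ in $(a,b)$, $F$ is strictly increasing on $[a,b]$, so on $(a,b)$ one has $F(s)<F(b)=\int_0^b f\le0$ by~\eqref{cond-f5}, giving $F<0$ on all of $(0,b)$. For the interval $(b,+\infty)$: by~\eqref{cond-f4}, $f<0$ on $(b,+\infty)$, so $F$ is strictly decreasing there, whence $F(s)<F(b)\le0$ for all $s>b$. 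Thus $F<0$ on $(0,b)\cup(b,+\infty)$. One should also note the edge case: if $F(b)<0$ strictly, then $F<0$ on all of $(0,+\infty)$; if $F(b)=0$, then $b$ is the smallest positive zero of $F$ with $f(b)=f'(b)<0<0$, so $f(b)=0$ — in either sub-case we land precisely in one of the alternatives of Corollary~\ref{cor2} in dimension $N=1$ ($F<0$ in $(0,+\infty)$, or $F<0$ in $(0,\beta)$ with $F(\beta)=0$ and $f(\beta)=0$, taking $\beta=b$), and in dimension $N\ge2$ we have $F\le0$ in $[0,+\infty)$ as required there.

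With these sign facts in hand, the conclusion is immediate: in dimension $N=1$, Corollary~\ref{cor2} (first statement) applies and yields $u\equiv0$; in dimension $N\ge2$, since $F\le0$ on $[0,+\infty)$, the second statement of Corollary~\ref{cor2} applies and again yields $u\equiv0$. Hence in any dimension $N\ge1$ the only nonnegative bounded solution of~\eqref{eq:ACN} satisfying~\eqref{localized} is the trivial one. I do not anticipate any real obstacle here — the corollary is a direct specialization — the only thing requiring a moment's care is the bookkeeping of whether $F(b)<0$ or $F(b)=0$ so as to match the exact phrasing of the two alternatives in the $N=1$ case of Corollary~\ref{cor2}; in the borderline $F(b)=0$ case one must observe that $f(b)=0$ holds automatically (it is part of~\eqref{cond-fbis}, indeed $f'(b)<0$), so $\beta=b$ is a legitimate choice.
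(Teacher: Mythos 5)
Your proof is correct and follows the paper's approach exactly: the paper derives Corollary~\ref{cor5} as an immediate consequence of Corollary~\ref{cor2} after observing that~\eqref{cond-fbis}-\eqref{cond-f5} force $F<0$ on $(0,b)\cup(b,+\infty)$, which is precisely what you verify via the sign of $f$ on $(0,a)$, $(a,b)$, and $(b,+\infty)$. The only cosmetic slip is the garbled phrase ``$f(b)=f'(b)<0<0$'' --- you presumably meant $f(b)=0$ and $f'(b)<0$, which is indeed what~\eqref{cond-fbis} gives and is what your final parenthetical correctly states.
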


On the other hand, if
\be\label{cond-f6}
\int_0^bf(s)\,ds>0,
\ee
then there are solutions $\phi$ of~\eqref{steady} \cite{BL2,BLP}, and
$$a=m_\phi<\max_{\R^N}\phi<M_\phi=b.$$
It is also well known~\cite{AW,FM} that there is a unique $c\in\R$, which is positive, and a unique up to shift function $\varphi\in C^2(\R)$ such that
\be\label{eqvarphi2}
\varphi''+c\varphi'+f(\varphi)=0\hbox{ in }\R,\ \ \varphi'<0\hbox{ in }\R,\ \ \varphi(-\infty)=b,\ \ \varphi(+\infty)=0.
\ee

An immediate corollary of Corollary~\ref{cor4}, Theorem~\ref{th1} and property~\eqref{convfront} in Remark~\ref{remf'M} is the following result.

\begin{corollary}\label{cor6}
Assume that $f$ satisfies~\eqref{cond-fbis} and \eqref{cond-f6} and that the solutions of~\eqref{steady}  are unique up to shifts. Let $u$ be a positive bounded solution of~\eqref{eq:ACN} satisfying~\eqref{localized}. Then there is $\phi\in\mathcal E$ such that $\|u(t,\cdot)-\phi\|_{L^\infty(\R^N)}\to0$ as $t\to-\infty$. Moreover, either $\|u(t,\cdot)\|_{L^\infty(\R^N)}\to0$ as $t\to+\infty$, or $u(t,x)\equiv\phi(x)$ in $\R\times\R^N$, or else there are $x_0\in\R^N$ and $\tau\in\R$ such that
$$\sup_{x\in\R^N}\Big|u(t,x)-\varphi\Big(|x-x_0|-ct+\frac{N-1}{c}\ln t+\tau\Big)\Big|\to0\ \hbox{ as }t\to+\infty,$$
where $c>0$ and $\varphi\in C^2(\R^N)$ are given in~\eqref{eqvarphi2}.
\end{corollary}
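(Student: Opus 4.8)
The plan is to derive Corollary~\ref{cor6} as an essentially immediate consequence of Corollary~\ref{cor4} together with the refinement provided by Remark~\ref{remf'M}. First I would check that the hypotheses of Corollary~\ref{cor4} are met: assumption~\eqref{hypf} holds because $f'(0)<0$ by~\eqref{cond-fbis}, and the assumption that solutions of~\eqref{steady} exist and are unique up to shifts is exactly what we posit (existence being also guaranteed, as recalled, by~\cite{BL2,BLP} under~\eqref{cond-f6}). Corollary~\ref{cor4} then delivers a $\phi\in\mathcal E$ with $\|u(t,\cdot)-\phi\|_{L^\infty(\R^N)}\to0$ as $t\to-\infty$, and the trichotomy: either $\|u(t,\cdot)\|_{L^\infty(\R^N)}\to0$ as $t\to+\infty$, or $u\equiv\phi$ on $\R\times\R^N$, or alternative~(iii) of Theorem~\ref{th1} holds. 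The first two alternatives are already the first two conclusions of Corollary~\ref{cor6}, so nothing further is needed there.

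It remains to unpack alternative~(iii) under the present bistable, cubic-type hypotheses. Here is where I would invoke~\eqref{cond-f4}: since $f<0$ in $(b,+\infty)$, the number $M_\phi$ defined in~\eqref{defMphi} equals $b$, and by Remark~\ref{rem13} the top value $M$ in case~(iii) satisfies $M=M_\phi=b$; moreover $f'(M)=f'(b)<0$ by~\eqref{cond-fbis}, so $M$ is nondegenerate. Thus the nondegeneracy hypothesis of Remark~\ref{remf'M} is fulfilled, and~\eqref{convfront} applies: there are $x_0\in\R^N$ and $\tau\in\R$, depending on $u$, with
$$\sup_{x\in\R^N}\Big|u(t,x)-\varphi\Big(|x-x_0|-ct+\frac{N-1}{c}\ln t+\tau\Big)\Big|\to0\ \hbox{ as }t\to+\infty.$$
Finally I would identify the profile and speed: the function $\varphi$ and speed $c>0$ appearing in Theorem~\ref{th1}(iii) solve~\eqref{eqvarphi} with $M=b$, which is precisely~\eqref{eqvarphi2}; by the classical uniqueness result~\cite{AW,FM} recalled just before the corollary, this $c$ is unique and positive and $\varphi$ is unique up to shift, so the $\varphi$, $c$ in~\eqref{convfront} coincide (up to the already-absorbed shift $\tau$) with those of~\eqref{eqvarphi2}. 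This yields exactly the third conclusion of Corollary~\ref{cor6}, completing the proof.

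Since the corollary is a packaging of already-established statements, there is no genuinely hard step; the only point requiring a little care — and the one I would state explicitly — is the chain $M=M_\phi=b$ and $f'(M)=f'(b)<0$, which is what licenses passing from the raw alternative~(iii) of Theorem~\ref{th1} to the sharp front-convergence formula of Remark~\ref{remf'M}. One should also note, for completeness, that the trivial solution $u\equiv0$ is excluded here since $u$ is assumed positive, so $\mathcal E\neq\emptyset$ and the conclusion~\eqref{conv1} of Theorem~\ref{th1} is non-vacuous; combined with uniqueness up to shifts of solutions of~\eqref{steady} and the radial symmetry of $u$ about a single point, the $\alpha$-limit convergence is to a single $\phi$, as asserted.
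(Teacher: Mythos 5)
Your proposal is correct and follows exactly the route the paper intends: Corollary~\ref{cor6} is stated there as ``an immediate corollary of Corollary~\ref{cor4}, Theorem~\ref{th1} and property~\eqref{convfront} in Remark~\ref{remf'M}'', and your chain $M=M_\phi=b$, $f'(M)=f'(b)<0$ (using the bistable structure~\eqref{cond-fbis} together with~\eqref{cond-f4}, which the subsection assumes throughout) is precisely the observation that licenses passing from alternative~(iii) to the sharpened front-convergence formula~\eqref{convfront}. Nothing to add.
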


Consider finally an important example of functions $f$ satisfying~\eqref{cond-f}, namely the cubic nonlinearities
\be\label{example-f}
f(s)=s(b-s)(s-a)
\ee
with $0<a<b$. Notice that~\eqref{cond-f6} is fulfilled if and only if $a<b/2$. Furthermore, in that case, the solutions of~\eqref{steady} exist and are unique up to shifts, by~\cite{BL2,BLP,ST}. As a consequence, the following corollary holds.

\begin{corollary}\label{cor7}
Let $0<a<b$ and $f$ be of the type~\eqref{example-f}. If $a\ge b/2$, then, in any dimension~$N\ge1$, the only nonnegative bounded solution $u$ of~\eqref{eq:ACN} satisfying~\eqref{localized} is the trivial solution $u\equiv 0$ in $\R\times\R^N$. If $a<b/2$, then~\eqref{steady} admits solutions and, for any positive bounded solution $u$ of~\eqref{eq:ACN} satisfying~\eqref{localized}, there is $\phi\in\mathcal E$ such that $\|u(t,\cdot)-\phi\|_{L^\infty(\R^N)}\to0$ as $t\to-\infty$, and either $\|u(t,\cdot)\|_{L^\infty(\R^N)}\to0$ as $t\to+\infty$, or $u(t,x)\equiv\phi(x)$ in $\R\times\R^N$, or else there are $x_0\in\R^N$ and $\tau\in\R$ such that
$$\sup_{x\in\R^N}\Big|u(t,x)-\varphi\Big(|x-x_0|-ct+\frac{N-1}{c}\ln t+\tau\Big)\Big|\to0\ \hbox{ as }t\to+\infty,$$
where $c>0$ and $\varphi\in C^2(\R^N)$ are given in~\eqref{eqvarphi2}.
\end{corollary}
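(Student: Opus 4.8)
The plan is to obtain Corollary~\ref{cor7} as a direct specialization of Corollary~\ref{cor6}, so the only real work is to verify that the cubic $f(s)=s(b-s)(s-a)$ satisfies the hypotheses of the earlier results and to analyze the threshold $a=b/2$. First I would check that $f$ given by~\eqref{example-f} satisfies~\eqref{cond-fbis} and~\eqref{cond-f4}: one computes $f(0)=f(a)=f(b)=0$, $f'(0)=-ab<0$, $f'(b)=-b(b-a)<0$, $f'(a)=a(b-a)>0$, $f<0$ on $(0,a)$, $f>0$ on $(a,b)$, and $f<0$ on $(b,+\infty)$ since the factors $s>0$, $(b-s)<0$, $(s-a)>0$ there. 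So the structural bistable conditions hold for every $0<a<b$. Next I would compute the sign of $\int_0^b f(s)\,ds$. An explicit integration gives $\int_0^b f(s)\,ds = \frac{b^3}{12}(b-2a)$ (up to a harmless positive factor), so $\int_0^b f(s)\,ds>0$ iff $a<b/2$, $=0$ iff $a=b/2$, and $<0$ iff $a>b/2$; this is exactly the dichotomy~\eqref{cond-f5} versus~\eqref{cond-f6}.

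In the case $a\ge b/2$ I would split into $a>b/2$ and $a=b/2$. If $a>b/2$ then~\eqref{cond-f5} holds with strict inequality and Corollary~\ref{cor5} applies directly, giving $u\equiv0$ in any dimension. If $a=b/2$ then $F(b)=0$ and one checks $F<0$ on $(0,b)\cup(b,+\infty)$: indeed $F(s)=\int_0^s f$ is strictly decreasing near $0$ (since $f<0$ there), has a local min, then increases on $(a,b)$ back up to $F(b)=0$, and decreases again on $(b,+\infty)$ since $f<0$; that $F<0$ strictly on $(0,b)$ follows because $F$ can only vanish where it has already turned around, and $F(b)=0$ is its maximum on $[0,+\infty)$. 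Thus the hypothesis ``$F<0$ in $(0,+\infty)$'' of Corollary~\ref{cor2} (the $N=1$ form uses $F(\beta)=0$, $f(\beta)=0$ with $\beta=b$, noting $f(b)=0$; the $N\ge2$ form uses $F\le0$ on $[0,+\infty)$) is met, so again $u\equiv0$. This handles the first sentence of the corollary in all dimensions.

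In the case $a<b/2$, I would invoke the known existence and uniqueness-up-to-shifts of solutions of~\eqref{steady} for cubic $f$ with $a<b/2$, which the text attributes to~\cite{BL2,BLP,ST} (the relevant structure being $f<0$ on $(0,a)$, $f>0$ on $(a,b)$, $f<0$ on $(b,+\infty)$, $F(b)>0$, hence existence, and the sign/monotonicity structure of $f$ on $(a,b)$ plus $f'(b)<0$ yielding uniqueness). Since the solutions of~\eqref{steady} are unique up to shifts, Corollary~\ref{cor4} (or its refinement Corollary~\ref{cor6}, which additionally uses Remark~\ref{remf'M}) applies. I would also record that here $m_\phi=a$ and $M_\phi=b<+\infty$, with $f'(b)=-b(b-a)<0$, so $M=M_\phi=b$ is nondegenerate, which is precisely the hypothesis needed to upgrade alternative~(iii) of Theorem~\ref{th1} to the sharp front convergence~\eqref{convfront}. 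The unique speed $c>0$ and profile $\varphi$ of~\eqref{eqvarphi2} exist by the classical bistable front theory~\cite{AW,FM}. Assembling these facts, Corollary~\ref{cor6} gives verbatim the three alternatives: uniform decay to $0$, the stationary state $u\equiv\phi$, or the front-like spreading with the stated logarithmic-shift correction.

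The routine obstacle is none too serious: essentially bookkeeping of the sign of $F$ near the threshold $a=b/2$ and correctly citing the existence/uniqueness results. The only point requiring slight care is the borderline $a=b/2$, where $\int_0^b f=0$ forces $F\le 0$ rather than $F<0$ everywhere, so one must confirm that Corollary~\ref{cor2} still covers it (it does, through the ``$F(\beta)=0$ and $f(\beta)=0$'' clause in dimension $1$ and the ``$F\le0$ on $[0,+\infty)$'' clause in dimension $N\ge2$, with $\beta=b$); and, symmetrically, that for $a<b/2$ one genuinely has $F(b)>0$ so that~\eqref{cond-f6} and hence the existence of $\phi$ hold. Once these sign computations are in place the corollary follows with no further argument.
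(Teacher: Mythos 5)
Your proposal is correct and follows essentially the same route as the paper: verify that the cubic satisfies \eqref{cond-fbis}--\eqref{cond-f4}, compute $\int_0^b f(s)\,ds=\tfrac{b^3}{12}(b-2a)$, and then apply Corollary~\ref{cor5} when $a\ge b/2$ and Corollary~\ref{cor6} (with the cited uniqueness up to shifts for cubic bistable nonlinearities) when $a<b/2$. The only mild redundancy is your separate treatment of $a=b/2$ via Corollary~\ref{cor2}: condition~\eqref{cond-f5} is $\int_0^b f\le0$, so Corollary~\ref{cor5} already covers the equality case directly — though this costs nothing, since the paper's own proof of Corollary~\ref{cor5} goes through exactly that clause of Corollary~\ref{cor2}, and your reasoning for the sign of $F$ on $(0,b)\cup(b,+\infty)$ is correct.
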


\noindent{\bf{Outline of the paper.}} Section~\ref{sec3} is concerned with some preliminary results on the existence of planar traveling fronts connecting $0$ and $M$ (these fronts are used in alternative~(iii) of Theorem~\ref{th1}) and on the existence of steady states in large balls under some assumptions on the nonlinearity. Section~\ref{sec4} is devoted to the proof of Theorem~\ref{th1} and Section~\ref{sec5} to the proof of Corollaries~\ref{cor1} and~\ref{cor3} (the other corollaries follow from the other results, as already emphasized).


\section{Some preliminary facts}\label{sec3}

We start with the existence and uniqueness of traveling fronts $(c,\varphi)$ solving~\eqref{eqvarphi} and arising in alternative~(iii) of Theorem~\ref{th1}.

\begin{lemma}\label{lem1}
Let $0<m<M$ and $f$ be a $C^1([0,M])$ function such that $f(0)=f(M)=0$, $f'(0)<0$, $f>0$ in $(m,M)$, $F<0$ in $(0,m]$ and $F(M)>0$. Then there are a unique~$c\in\R$ and $\varphi:\R\to(0,M)$ of class $C^2(\R)$ solving
\be\label{eqvarphi2bis}
\varphi''+c\varphi'+f(\varphi)=0\hbox{ in }\R,\ \varphi(-\infty)=M,\hbox{ and }\varphi(+\infty)=0,
\ee
where the uniqueness of $\varphi$ is understood up to shifts. Furthermore, $c>0$ and $\varphi'<0$ in $\R$.
\end{lemma}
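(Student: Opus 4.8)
The plan is to treat this as a standard phase-plane / ODE shooting problem, since \eqref{eqvarphi2bis} is the traveling-wave ODE for a (possibly degenerate) bistable-type nonlinearity. First I would reduce to a first-order system by setting $p=\varphi'$, so that heteroclinic solutions of \eqref{eqvarphi2bis} correspond to orbits in the $(\varphi,p)$-plane going from the rest point $(M,0)$ to the rest point $(0,0)$ with $p<0$ along the orbit. Existence and uniqueness of a speed $c$ for which such a connecting orbit exists is classical under the stated sign conditions ($f(0)=f(M)=0$, $f'(0)<0$, $f>0$ on $(m,M)$, $F<0$ on $(0,m]$, $F(M)>0$); I would cite \cite{AW,FM} for the bistable prototype and indicate how the argument adapts. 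Concretely, I would introduce the standard first-integral-type functional: along a trajectory with $\varphi'=p$, one has $\frac{d}{dx}\big(\tfrac12 p^2 + F(\varphi)\big) = -c\,p^2$. Evaluating this between $x=-\infty$ (where $\varphi=M$, $p=0$) and $x=+\infty$ (where $\varphi=0$, $p=0$) forces $c\int_{\R} p^2\,dx = F(M) - F(0) = F(M) > 0$, so immediately $c>0$; this is the clean argument for the sign of $c$ and I would present exactly this computation.

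The key steps, in order: (1) For each candidate speed $c\in\R$, linearize at $(M,0)$: the eigenvalues are $\tfrac12(-c\pm\sqrt{c^2-4f'(M)})$; since $f\ge 0$ just to the left of $M$ and $f(M)=0$, one has $f'(M)\le 0$, so $(M,0)$ always has a one-dimensional unstable manifold leaving into the region $\varphi<M$, $p<0$ — note this works even when $f'(M)=0$ (degenerate case), where one branch is a center-type direction but a trajectory still leaves monotonically because $f>0$ on $(m,M)$ pushes $\varphi''<0$ near $M$. (2) Follow the trajectory $\mathcal T_c$ leaving $(M,0)$ into $\{p<0\}$; define the shooting function $\Psi(c)$ as the value of $p$ when $\varphi$ first reaches $0$ (or $\pm\infty$ if $\varphi$ stays bounded away from $0$). (3) Show $\Psi$ is monotone in $c$ (larger $c$ increases the dissipation $-cp^2$, lowering $|p|$ at a given $\varphi$), continuous where finite, with $\Psi(c)<0$ for $c$ small (using $F(M)>0$ so the trajectory overshoots through $p=0$ below $\varphi$-values in $(0,m]$ where $F<0$) and $\Psi(c)>0$ — i.e. the trajectory cannot reach $\varphi=0$ at all, turning back — for $c$ large. (4) The intermediate value theorem then yields a unique $c=c^*$ for which $\mathcal T_{c^*}$ reaches $(0,0)$; uniqueness up to shifts of $\varphi$ is automatic since the unstable manifold of $(M,0)$ is one-dimensional and the orbit is thereby determined. (5) Finally, $\varphi'<0$ throughout: along $\mathcal T_{c^*}$, $p$ starts negative and cannot vanish, since at any point with $p=0$ one has $\varphi'' = -f(\varphi) < 0$ when $\varphi\in(m,M)$ forcing $p$ to decrease (so it could only reach $p=0$ from below, i.e. never), and for $\varphi\in(0,m]$ the energy relation $\tfrac12 p^2 = F(M) - F(\varphi) + \text{(accumulated dissipation)}$ keeps $\tfrac12 p^2 > 0$ because $F(\varphi)<F(M)$ there; hence $p<0$ on all of $\R$, and in particular $\varphi$ is strictly decreasing. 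Monotonicity and the sign conditions also guarantee $\varphi$ cannot hit $\varphi=m$ with $p=0$ prematurely, so the full range $(0,M)$ is traversed.

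The main obstacle is handling the interval $(0,m]$ where $f$ may change sign and $F<0$: one must rule out the trajectory turning back before reaching $\varphi=0$, and rule out spurious connections to intermediate zeros of $f$ in $(0,m)$. The condition $F<0$ on $(0,m]$ together with $F(M)>0$ is precisely what forces the connecting orbit (for the correct $c$) to pass monotonically through this zone with $p$ staying strictly negative — the "energy budget" $F(M)-F(\varphi)>0$ on $(0,m]$ cannot be exhausted by dissipation before $\varphi$ reaches $0$ for the threshold speed — and also guarantees that at $\varphi=0$ we arrive exactly at $p=0$ (not overshooting, by the choice of $c^*$; not undershooting, else $\Psi(c^*)>0$). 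I would also note that when $f'(M)=0$ the approach to $\varphi=M$ at $x\to-\infty$ is no longer exponential but still monotone, requiring a short separate argument (ODE comparison near $\varphi=M$ using $f>0$ on $(m,M)$) to confirm $\varphi(-\infty)=M$ rather than the trajectory stalling at some $\varphi<M$; this is where I expect to spend the most care. The uniqueness of $c$ then follows from the strict monotonicity of $\Psi$ established in step (3), which in turn rests on a standard comparison argument for the two trajectories $\mathcal T_{c_1}$, $\mathcal T_{c_2}$ with $c_1<c_2$ in the $(\varphi,p)$-plane.
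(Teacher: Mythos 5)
Your direct phase-plane shooting argument is a genuinely different route from the paper's. The paper regularizes $f$ into a sequence of nonlinearities $\overline{f}_n\ge f$ with only nondegenerate zeros, invokes the propagating-terrace theorem of \cite[Theorem~2.8]{FM} to obtain for each $\overline{f}_n$ a stacked family of fronts with nonincreasing speeds, and uses the sign conditions ($F<0$ on $(0,m]$, $F(M)>0$) to show this stack collapses to a single front with speed $\overline{c}_n>0$; it then establishes monotonicity of $(\overline{c}_n)_n$ by a sliding/maximum-principle comparison of the profiles, passes to the limit $n\to\infty$, and deploys the energy identity $c\int_\R(\varphi')^2=F(M)$ (the same computation you use for the sign of $c$) only at the very end, to pin down $\varphi(-\infty)=M$. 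Uniqueness and $\varphi'<0$ are quoted from \cite[Corollary~2.3]{FM} and \cite[Lemma~2.1]{FM}, as you also note. The key difference is where the technicalities land: the approximation route delegates precisely the issues you flag as needing the most care --- the non-hyperbolic rest point $(M,0)$ when $f'(M)=0$, and the possible discontinuity of your shooting function $\Psi$ at speeds for which the trajectory from $(M,0)$ asymptotes to an intermediate zero of $f$ in $(0,m)$ --- to a black-box theorem applicable to \emph{nondegenerate} nonlinearities, whereas your route must handle them directly. To close your sketch you would still need (a) a center-manifold-type construction of the unique trajectory leaving $(M,0)$ into $\{\varphi<M,\ p<0\}$ when $f'(M)=0$, and (b) an argument that any speed at which the trajectory asymptotes to an intermediate zero in $(0,m)$ strictly exceeds the connecting speed $c^*$, so that $\Psi$ is genuinely continuous on $(0,c^*]$ and the intermediate value theorem applies; (b) does follow from strict monotone dependence of the phase-plane trajectories on $c$ combined with the energy inequality, but it is left implicit in your sketch. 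In short: a valid and arguably more elementary alternative in outline, at the cost of confronting head-on the ODE subtleties the paper's approximation scheme was designed to sidestep.
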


The result is expected since it has been well known under some additional assumptions on~$f$. However we are not aware of a suitable reference for its proof, which is therefore sketched here for the sake of completeness.

\begin{proof}
First of all, the uniqueness of a pair $(c,\varphi)$ is a direct consequence of \cite[Corollary~2.3]{FM} and the property $\varphi'<0$ in $\R$ follows from~\cite[Lemma~2.1]{FM}. Furthermore, $c>0$ by integra\-ting~\eqref{eqvarphi2bis} against $\varphi'$ over $\R$ and using the assumption $F(M)>0$.

Let us now show the existence of a pair $(c,\varphi)$ solving~\eqref{eqvarphi2bis}. From the assumptions made on $f$, it is easy to check that there are a decreasing sequence $(\epsilon_n)_{n\in\N}$ in $(0,m)$ converging to~$0$, and a sequence $(\overline{f}_n)_{n\in\N}$ such that each function
$$\overline{f}_n:[\epsilon_n,M+\epsilon_n]\to\R$$
is of class $C^1([\epsilon_n,M+\epsilon_n])$, the sequence $(\|\overline{f}_n\|_{C^1([\epsilon_n,M+\epsilon_n])})_{n\in\N}$ is bounded, and for each $n\in\N$, there holds: $\overline{f}_n(\epsilon_n)=\overline{f}_n(M+\epsilon_n)=0$, $\overline{f}'_n(\epsilon_n)<0$, $\overline{f}'_n(M\!+\!\epsilon_n)<0$, the zeroes of~$\overline{f}_n$ are all non-degenerate (that is, $\big\{s\in[\epsilon_n,M+\epsilon_n]:\overline{f}_n(s)=\overline{f}'_n(s)=0\big\}=\emptyset$), together with~$\overline{f}_n\ge\overline{f}_{n'}$ in $[\epsilon_n,M+\epsilon_{n'}]$ if $n\le n'$, $\overline{f}_n\ge f$ in $[\epsilon_n,M]$, and $\max_{[\epsilon_n,M]}|\overline{f}_n-f|\to0$ as~$n\to+\infty$. Furthermore, even if it means considering a subsequence, one can always assume that, for each $n\in\N$,
\be\label{Fn}
\overline{f}_n>0\hbox{ in }(m,M+\epsilon_n),\ \int_{\epsilon_n}^s\!\overline{f}_n(\sigma)\,d\sigma<0\hbox{ for all }s\in(\epsilon_n,m],\hbox{ and}\int_{\epsilon_n}^{M+\epsilon_n}\!\overline{f}_n(\sigma)\,d\sigma>0.
\ee

For each $n\in\N$, it then follows from~\cite[Theorem~2.8]{FM} that there are $p\in\N$, some real numbers $\epsilon_n=a_0<a_1<\cdots<a_p=M+\epsilon_n$ and
\be\label{c1p}
c_1\ge\cdots\ge c_p
\ee
such that, for each $j\in\{1,\cdots,p\}$, there is a $C^2(\R)$ function $\varphi_j:\R\to(a_{j-1},a_j)$ satisfying
\be\label{varphij}
\varphi_j''+c_j\varphi_j'+\overline{f}_n(\varphi_j)=0\hbox{ in }\R,\ \varphi_j'<0\hbox{ in }\R,\ \varphi_j(-\infty)=a_j,\hbox{ and }\varphi_j(+\infty)=a_{j-1}.
\ee
Notice that the quantities $p$, $a_j$, $c_j$ and $\varphi_j$ actually depend on $n$, that $\overline{f}_n(a_j)=0$ for all $0\le j\le p$, and that the family $(c_j,\varphi_j)_{1\le j\le p}$ is then a stacked combination of traveling fronts with non-increasing speeds for the reaction term $\overline{f}_n$. Since $a_{p-1}\le m$ and $\int_{a_{p-1}}^{M+\epsilon_n}\overline{f}_n(\sigma)d\sigma>0$ by~\eqref{Fn}, integrating~\eqref{varphij} with $j=p$ against $\varphi'_p$ over $\R$ implies that $c_p>0$. Furthermore, if~$p\ge2$, one would have $a_1\le m$ and then $c_1<0$ by using again~\eqref{Fn} and~\eqref{varphij} with $j=1$, contradicting~\eqref{c1p}. Therefore, $p=1$ and there is then a solution $(\overline{c}_n,\overline{\varphi}_n)$ of
$$\overline{\varphi}_n''+\overline{c}_n\overline{\varphi}_n'+\overline{f}_n(\overline{\varphi}_n)=0\hbox{ in }\R,\ \overline{\varphi}_n'<0\hbox{ in }\R,\ \overline{\varphi}_n(-\infty)=M+\epsilon_n,\hbox{ and }\overline{\varphi}_n(+\infty)=\epsilon_n,$$
with $\overline{c}_n>0$.

Now, if $n<n'$, then $M+\epsilon_n>M+\epsilon_{n'}>\epsilon_n>\epsilon_{n'}$ and, up to shifts, one has~$\overline{\varphi}_n\ge\overline{\varphi}_{n'}$ in~$\R$ with equality at a point $\xi$ such that $\overline{\varphi}_n(\xi)=\overline{\varphi}_{n'}(\xi)\in(\epsilon_n,M+\epsilon_{n'})$. On the other hand, if $\overline{c}_n\le\overline{c}_{n'}$, then one would have
$$\overline{\varphi}_n''+\overline{c}_{n'}\overline{\varphi}_n'+\overline{f}_{n'}(\overline{\varphi}_n)\le\overline{\varphi}_n''+\overline{c}_n\overline{\varphi}_n'+\overline{f}_n(\overline{\varphi}_n)=0=\overline{\varphi}_{n'}''+\overline{c}_{n'}\overline{\varphi}_{n'}'+\overline{f}_{n'}(\overline{\varphi}_{n'})$$
in the open interval $I=\{x\in\R:\epsilon_n<\overline{\varphi}_n(x)<M+\epsilon_{n'}\}$. In other words, the functions $\overline{\varphi}_n$ and $\overline{\varphi}_{n'}$ are respectively a super-solution and a solution of the same elliptic equation in $I$, with $\overline{\varphi}_n\ge\overline{\varphi}_{n'}$ in $\R\supset I$. Since $\xi\in I$, it then follows from the strong maximum principle that $\overline{\varphi}_n(x)=\overline{\varphi}_{n'}(x)$ for all $x\in I$. Since $I$ is of the type $I=(\zeta,+\infty)$ for some $\zeta\in\R$, one gets a contradiction by passing to the limit in $\overline{\varphi}_n(x)=\overline{\varphi}_{n'}(x)$ as $x\to+\infty$. As a consequence, $\overline{c}_n>\overline{c}_{n'}$ and the sequence $(\overline{c}_n)_{n\in\N}$ is decreasing, with $\overline{c}_n>0$ for every $n\in\N$.

Finally, there is $c\in\R$ such that $\overline{c}_n\to c\ge0$ as $n\to+\infty$. Let $\eta$ be any real number in~$(0,m)$ such that $f<0$ in $(0,\eta]$. Up to shifts, one can assume without loss of generality that $\overline{\varphi}_n(0)=\eta$ for all $n$ large enough (such that $\epsilon_n<\eta$). From standard elliptic estimates, the functions $\overline{\varphi}_n$ converge in $C^2_{loc}(\R)$ to a $C^2(\R)$ function $\varphi$ such that $\varphi(0)=\eta$, $0\le\varphi\le M$ in~$\R$, $\varphi'\le0$ in $\R$, and
\be\label{eqvarphi5}
\varphi''+c\varphi'+f(\varphi)=0\hbox{ in }\R.
\ee
From standard elliptic estimates, $\varphi'(\pm\infty)=\varphi''(\pm\infty)=f(\varphi(\pm\infty))=0$ and the choice of $\eta$ yields $\varphi(+\infty)=0$, hence $\varphi'<0$ in $\R$ from the strong maximum principle and $0<\varphi(-\infty)\le M$. Integrating~\eqref{eqvarphi5} against $\varphi'$ over $\R$ implies that $F(\varphi(-\infty))=c\int_{\R}(\varphi')^2\ge0$. Therefore, the assumptions on $f$ imply that $\varphi(-\infty)=M$. In other words, the pair $(c,\varphi)$ solves~\eqref{eqvarphi2bis}, and the proof of Lemma~\ref{lem1} is thereby complete.
\end{proof}

\begin{remark}\label{rem32}{\rm
The arguments used in the proof of Lemma~\ref{lem1} also lead to the approximation of the unique speed $c$ from below. Namely, as above, there are a decreasing sequence $(\epsilon_n)_{n\in\N}$ in $(0,m)$ converging to $0$, and a sequence $(\underline{f}_n)_{n\in\N}$ such that each function
$$\underline{f}_n:[-\epsilon_n,M-\epsilon_n]\to\R$$
is of class $C^1([-\epsilon_n,M-\epsilon_n])$, the sequence $(\|\underline{f}_n\|_{C^1([-\epsilon_n,M-\epsilon_n])})_{n\in\N}$ is bounded, and for each~$n\in\N$, there holds: $\underline{f}_n(-\epsilon_n)=\underline{f}_n(M-\epsilon_n)=0$, $\underline{f}'_n(-\epsilon_n)<0$, $\underline{f}'_n(M-\epsilon_n)<0$, the zeroes of $\underline{f}_n$ are all non-degenerate, together with $\underline{f}_n\le\underline{f}_{n'}$ in $[-\epsilon_{n'},M-\epsilon_n]$ if $n\le n'$, $\underline{f}_n\le f$ in~$[0,M-\epsilon_n]$, and $\max_{[0,M-\epsilon_n]}|\underline{f}_n-f|\to0$ as $n\to+\infty$. Lastly, after fixing a real number $m'\in(m,M)$ such that $F<0$ in $(0,m']$, one can assume without loss of generality that, for each $n\in\N$,
$$\underline{f}_n>0\hbox{ in }[m',M-\epsilon_n),\ \int_{-\epsilon_n}^s\!\underline{f}_n(\sigma)\,d\sigma<0\hbox{ for all }s\in(-\epsilon_n,m'],\hbox{ and }\int_{-\epsilon_n}^{M-\epsilon_n}\!\underline{f}_n(\sigma)\,d\sigma>0.$$
As in the proof of Lemma~\ref{lem1}, one can then show that, for each $n\in\N$, there is a solution $(\underline{c}_n,\underline{\varphi}_n)$ of $\underline{\varphi}_n''+\underline{c}_n\underline{\varphi}_n'+\underline{f}_n(\underline{\varphi}_n)=0$ in $\R$, $\underline{\varphi}_n'<0$ in $\R$, $\underline{\varphi}_n(-\infty)=M-\epsilon_n$, $\underline{\varphi}_n(+\infty)=-\epsilon_n$, and $\underline{c}_n>0$. Furthermore, the sequence $(\underline{c}_n)_{n\in\N}$ is increasing, and $\underline{c}_n<c$ for all $n\in\N$, with the same arguments as in the proof of Lemma~\ref{lem1}. Finally, there is $\underline{c}\le c$ such that~$\underline{c}_n\to\underline{c}$ as~$n\to+\infty$, and there is a $C^2(\R)$ solution $\underline{\varphi}$ of~\eqref{eqvarphi2bis} with speed $\underline{c}$ instead of $c$. The uniqueness of $(c,\varphi)$ then yields $\underline{c}=c$, hence $\underline{c}_n\to c$ as~$n\to+\infty$.}
\end{remark}

The second preliminary result is concerned with the existence of solutions of some semilinear elliptic equations in large balls.

\begin{lemma}\label{lem2}
Let $\alpha<\beta$ be two real numbers and $g:[\alpha,\beta]\to\R$ be a $C^1([\alpha,\beta])$ function such that $g(\alpha)=g(\beta)=0$ and
\be\label{hypg}
G(\beta):=\int_\alpha^\beta g(\sigma)d\sigma>\int_\alpha^sg(\sigma)d\sigma=:G(s)\hbox{ for all }s\in[\alpha,\beta).
\ee
Then, for each $\nu\in(\alpha,\beta)$, there are $R>0$ and a $C^2(\overline{B_R})$ function $\psi$ such that
\be\label{eqpsi}\left\{\baa{rcll}
\Delta\psi+g(\psi) & \!=\! & 0 & \hbox{in }\overline{B_{R}},\vspace{3pt}\\
\alpha\ \le\ \psi & \!<\! & \beta & \hbox{in }\overline{B_{R}},\vspace{3pt}\\
\psi & \!=\! & \alpha & \hbox{on }\partial B_{R},\vspace{3pt}\\
\displaystyle\mathop{\max}_{\overline{B_{R}}}\psi\ =\ \psi(0) & \!>\! & \nu.& \eaa\right.
\ee
\end{lemma}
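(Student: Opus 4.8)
The plan is to build the solution $\psi$ as an energy minimizer, after reducing to a nice auxiliary nonlinearity, and then to enlarge the domain. First I would replace $g$ on $[\alpha,\beta]$ by a modified $C^1$ function $\tilde g$ agreeing with $g$ on $[\alpha,\beta]$ but extended to all of $\R$ so that $\tilde g(s)=g'(\alpha)(s-\alpha)$ for $s<\alpha$ (so $\tilde g<0$ just below $\alpha$) and $\tilde g(s)<0$ for $s>\beta$, with $\tilde G(s):=\int_\alpha^s\tilde g$ satisfying $\tilde G(s)<\tilde G(\beta)=G(\beta)$ for all $s\neq\beta$ and $\tilde G(s)\to-\infty$ as $|s|\to\infty$. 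The point of~\eqref{hypg} is precisely that $G$ attains a strict global maximum over $[\alpha,\beta]$ at $s=\beta$; after this modification the same holds over all of $\R$. Any solution of $\Delta\psi+\tilde g(\psi)=0$ in $B_R$ with $\psi=\alpha$ on $\partial B_R$ and $\psi\le\beta$ automatically satisfies $\alpha\le\psi$ by the maximum principle (since $\tilde g(s)<0$ for $s<\alpha$ and $\tilde g(\alpha)=0$) and $\psi<\beta$ by the strong maximum principle (since $\tilde g(\beta)=0$), so a solution of the modified problem is a solution of~\eqref{eqpsi} provided we can also force $\max\psi>\nu$.

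Next I would fix a large ball $B_R$ and consider the functional $J_R(v)=\int_{B_R}\big(\tfrac12|\nabla v|^2-\tilde G(v)\big)\,dx$ on the affine space $\alpha+H^1_0(B_R)$. Because $\tilde G(v)\le \tilde G(\beta)$ pointwise, $J_R$ is bounded below, coercive on $\alpha+H^1_0(B_R)$ and weakly lower semicontinuous, so it admits a minimizer $\psi=\psi_R$, which is a classical $C^2(\overline{B_R})$ solution by elliptic regularity. The crucial quantitative step is the energy comparison: I would test $J_R$ against the competitor equal to $\beta$ on $B_{R-1}$, interpolating linearly from $\beta$ to $\alpha$ on the shell $B_R\setminus B_{R-1}$. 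Its energy is $-\tilde G(\beta)\,|B_{R-1}| + O(R^{N-1})$, i.e. it behaves like $-c_N\tilde G(\beta)R^N + O(R^{N-1})$ with $\tilde G(\beta)=G(\beta)>0$, so $J_R(\psi_R)\le -c_N G(\beta)R^N + O(R^{N-1})\to-\infty$. On the other hand, if we had $\psi_R\le\nu$ throughout $B_R$, then $-\tilde G(\psi_R)\ge -\max_{[\alpha,\nu]}\tilde G =: -\kappa$ with $\kappa=\max_{[\alpha,\nu]}G<G(\beta)$ (strict by~\eqref{hypg}), which would give $J_R(\psi_R)\ge -\kappa\,|B_R| = -c_N\kappa R^N$; since $\kappa<G(\beta)$ this contradicts the upper bound once $R$ is large enough. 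Hence for $R$ large the minimizer satisfies $\max_{\overline{B_R}}\psi_R>\nu$.

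Finally, since $\tilde g$ is radial-data compatible, the minimizer may be taken radially symmetric and radially decreasing — either by a Schwarz symmetrization argument (which does not increase $J_R$ because the Dirichlet energy decreases under symmetrization and $\int \tilde G(v)$ is invariant) or by noting the ODE structure and the moving-plane principle applied to the minimizer, which is positive relative to $\alpha$ inside $B_R$. Radial monotonicity gives $\max_{\overline{B_R}}\psi=\psi(0)$, completing~\eqref{eqpsi}. The main obstacle is the energy-comparison step: one must be sure that the boundary-layer cost on the shell $B_R\setminus B_{R-1}$ is genuinely lower order ($O(R^{N-1})$) while the bulk gain is of order $R^N$, and that the strict inequality $\max_{[\alpha,\nu]}G<G(\beta)$ from~\eqref{hypg} is what lets the two competing $R^N$-terms be separated; everything else is standard calculus of variations, elliptic regularity, and maximum-principle bookkeeping.
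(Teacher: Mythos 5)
Your overall strategy coincides with the paper's: minimize the energy functional $J_R$ over $\alpha+H^1_0(B_R)$, get a classical radially decreasing solution, and force $\psi_R(0)\to\beta$ by testing against a competitor that equals $\beta$ on $B_{R-1}$ and interpolates to $\alpha$ on the outer shell, so that the $O(R^N)$ bulk gain from $G(\beta)$ eventually overwhelms the $O(R^{N-1})$ boundary-layer cost and the $O(R^N)$ lower bound that would hold if $\max\psi_R\le\nu$ (using $\max_{[\alpha,\nu]}G<G(\beta)$, a consequence of~\eqref{hypg}). That comparison step is correct and is exactly the paper's argument.

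However, the way you set up the extension $\tilde g$ contains a genuine sign error that breaks the preliminary reductions. You take $\tilde g(s)=g'(\alpha)(s-\alpha)$ for $s<\alpha$ and assert this makes $\tilde g<0$ just below $\alpha$; but then $\tilde G'(s)=\tilde g(s)<0$ on $(-\infty,\alpha)$, so $\tilde G$ is \emph{decreasing} there and $\tilde G(s)>\tilde G(\alpha)=0$ for $s<\alpha$, tending to $+\infty$ (not $-\infty$) as $s\to-\infty$. Thus the claim ``$\tilde G$ attains a strict global maximum at $\beta$ over $\R$'' is false with your extension, and $J_R$ is not bounded below by the argument you describe. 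Worse, the maximum-principle claim that $\alpha\le\psi$ ``since $\tilde g(s)<0$ for $s<\alpha$'' is not valid: if $\psi<\alpha$ on a set, then there $\Delta\psi=-\tilde g(\psi)>0$, so $\psi$ is \emph{subharmonic}, and subharmonicity does not prevent an interior dip below the boundary value $\alpha$ (take $N=1$, $\tilde g\equiv-1$ on $(-\infty,\alpha]$, $\psi(x)=\alpha-(R^2-x^2)/2$). For the maximum-principle route to work you would need $\tilde g>0$ below $\alpha$, which then also gives $\tilde G<0$ there; but then your formula $\tilde g(s)=g'(\alpha)(s-\alpha)$ only produces this sign when $g'(\alpha)<0$, which is not guaranteed by the hypotheses. (A side remark: you also do not need $\tilde G\to-\infty$ for coercivity — the Dirichlet term and Poincaré already give coercivity on $\alpha+H^1_0(B_R)$, since $-\int G(v)$ is bounded below once $G\le G(\beta)$.)

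The paper sidesteps all of this cleanly by extending $g$ by $0$ outside $[\alpha,\beta]$, so $G$ is constant (equal to $0$) on $(-\infty,\alpha]$ and constant (equal to $G(\beta)$) on $[\beta,\infty)$; then the pointwise truncation $\psi\mapsto\max(\alpha,\min(\beta,\psi))$ does not increase $J_R$, and one may assume $\psi_R$ takes values in $[\alpha,\beta]$ from the start, after which elliptic regularity gives $C^2$ and \cite{GNN1} gives radial monotonicity. If you correct the sign of your extension (or simply adopt the extension by $0$ and truncate) the rest of your proof goes through and matches the paper's.
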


\begin{proof}
The proof is standard, based on~\cite{BL1}, so we just sketch it. Let $\overline{g}:\R\to\R$ be the function defined by $\overline{g}(s)=g(s)$ for $s\in[\alpha,\beta]$, and $\overline{g}(s)=0$ for $s\in\R\!\setminus\![\alpha,\beta]$, and let us still call $G$ the primitive of $\overline{g}$ vanishing at $\alpha$. For each $r>0$, there is a minimizer~$\psi_r\in \alpha+H^1_0(B_{r})$ of the Lagrangian $J_r$ defined in $\alpha+H^1_0(B_{r})$ by
$$J_r(\varphi)=\frac{1}{2}\int_{B_{r}}|\nabla\varphi(x)|^2\,dx-\int_{B_{r}}G(\varphi(x))\,dx,\ \ J_r(\psi_r)=\min_{\varphi\in \alpha+H^1_0(B_{r})}J_r(\varphi),$$
Owing to the definitions of $\overline{g}$ and $G$, one can assume without loss of generality that $\psi_r$ ranges in $[\alpha,\beta]$, hence $\psi_r$ is of class $C^2(\overline{B_r})$ from elliptic estimates and  it solves~$\Delta\psi_r+g(\psi_r)=0$ in $\overline{B_r}$ with $\psi_r=\alpha$ on $\partial B_r$. The strong elliptic maximum principle also yields $\psi_r<\beta$ in~$\overline{B_r}$ and, either $\psi_r\equiv\alpha$ in $\overline{B_r}$, or $\psi_r>\alpha$ in $B_{r}$. In both cases,~$\psi_r$ is a radially symmetric and nonincreasing function of $|x|$ (from~\cite{GNN1} in the latter). In particular, $\max_{\overline{B_{r}}}\psi_r\!=\!\psi_r(0)\!\in\![\alpha,\beta)$.

Let us now show that
\be\label{limitbeta}
\max_{\overline{B_{r}}}\psi_r=\psi_r(0)\to\beta\ \hbox{ as }r\to+\infty,
\ee
which will then provide $R>0$ and a solution $\psi$ of~\eqref{eqpsi}, given a fixed real number $\nu\in(\alpha,\beta)$. Assume by way of contradiction that there are $\theta\in(\alpha,\beta)$, a sequence $(r_k)_{k\in\N}\to+\infty$ and a sequence~$(\psi_{r_k})_{k\in\N}$ of $C^2(\overline{B_{r_k}})$ functions such that each $\psi_{r_k}:\overline{B_{r_k}}\to[\alpha,\beta)$ minimizes $J_{r_k}$ in $\alpha+H^1_0(B_{r_k})$ and $\max_{\overline{B_{r_k}}}\psi_{r_k}=\psi_{r_k}(0)\le\theta<\beta$. From the assumptions made on $g$, there is $\delta>0$ such that $G(s)\le G(\beta)-\delta$ for all $s\in[\alpha,\theta]$. Hence, $J_{r_k}(\psi_{r_k})\ge(\delta-G(\beta))\,\alpha_Nr_k^N$ for all $k\in\N$, where $\alpha_N>0$ denotes the Lebesgue measure of the $N$-dimensional unit ball~$B_1$. On the other hand, after assuming without loss of generality that $r_k>1$ for every $k\in\N$, consider the function~$\varphi_k\in\alpha+H^1_0(B_{r_k})$ defined by $\varphi_k(x)=\beta$ for $x\in B_{r_k-1}$ and~$\varphi_k(x)=\alpha+(\beta-\alpha)(r_k-|x|)$ for $x\in\overline{B_{r_k}}\!\setminus\!B_{r_k-1}$. For each~$k\in\N$, one has
$$\baa{rcl}
J_{r_k}(\psi_{r_k}) & \le & \displaystyle J_{r_k}(\varphi_k)=\frac{\alpha_N}{2}\,(r_k^N-(r_k-1)^N)-G(\beta)\,\alpha_N\,(r_k-1)^N-\int_{B_{r_k}\setminus B_{r_k-1}}G(\varphi_k(x))\,dx\vspace{3pt}\\
& \le & \displaystyle\alpha_N\,\Big(\frac{1}{2}+\max_{[\alpha,\beta]}|G|\Big)\,(r_k^N-(r_k-1)^N)-G(\beta)\alpha_N(r_k-1)^N.\eaa$$
This implies that
\[
\delta\,r_k^N\le \left(\dfrac 12+\max_{[\alpha,\beta]}|G|+G(\beta)\right)(r_k^N-(r_k-1)^N).
\]
It contradicts $\lim_{k\to+\infty}r_k=+\infty$, since $\delta>0$. As a consequence,~\eqref{limitbeta} holds and the proof of Lemma~\ref{lem2} is thereby complete.
\end{proof}


\section{Proof of Theorem~\ref{th1}}\label{sec4}

This section is devoted to the proof of Theorem~\ref{th1}. Throughout it, one assumes that $f$ satisfies~\eqref{hypf} and~\eqref{hypf2}, and $u$ denotes a positive bounded solution of~\eqref{eq:ACN} satisfying~\eqref{localized}. Sections~\ref{sec41} and~\ref{sec42} are concerned with the behaviors of $u$ as $t\to-\infty$ and~$t\to+\infty$, respectively.


\subsection{The behavior of $u$ as $t\to-\infty$}\label{sec41}

We here establish~\eqref{conv1} and further integral pro\-perties of the solution $u$. First of all, it follows from~\cite[Theorem~1.1]{P1} that there is a point~$x_0\in\R^N$ such that
\be\label{defx0}\left\{\baa{ll}
u(t,x)=u(t,y) & \hbox{for all }(t,x,y)\in\R\times\R^N\times\R^N\hbox{ with }|x-x_0|=|y-x_0|,\vspace{3pt}\\
\nabla u(t,x)\cdot(x-x_0)<0 & \hbox{for all }(t,x)\in\R\times\R^N\hbox{ with }x\neq x_0.\eaa\right.
\ee
Furthermore,~\cite[Corollary~2.5]{P1} implies that there are some positive constants $C$ and $\nu$ such that
\be\label{inequ-nux}
0<u(t,x)\le C\,e^{-\nu|x|}\ \hbox{ for all }(t,x)\in(-\infty,0]\times\R^N.
\ee
Denote
$$M_0=\|u\|_{L^\infty(\R\times\R^N)}>0$$
and $L=\nu^2+\max_{[0,M_0]}|f'|$. For any unit vector $e$ of $\R^N$, the function $\overline{u}(t,x)=C\,e^{-\nu x\cdot e+Lt}$ satisfies $\overline{u}_t(t,x)-\Delta\overline{u}(t,x)-f(\overline{u}(t,x))\ge0$ for any $(t,x)\in\R\times\R^N$ such that $\overline{u}(t,x)\le M_0$. Therefore, the maximum principle implies that $u(t,x)\le\overline{u}(t,x)$ for all $(t,x)\in[0,+\infty)\times\R^N$ and for any unit vector $e$, hence $u(t,x)\le C\,e^{-\nu|x|+Lt}$ for all $(t,x)\in[0,+\infty)\times\R^N$. By combining this inequality with \eqref{inequ-nux}, for every $T\in\R$, there is a real number $C_T>0$ such that
\be\label{expdecay}
0<u(t,x)\le C_T\,e^{-\nu|x|}\ \hbox{ for all }(t,x)\in(-\infty,T]\times\R^N.
\ee
From standard parabolic estimates, one also infers that, for every $T\in\R$, there is a real number~$C'_T>0$ such that
\be\label{exp}
u(t,x)\!+\!|u_t(t,x)|\!+\!|\nabla u(t,x)|\!+\!\!\sum_{1\le i,j\le N}\!\!\!\!|u_{x_ix_j}(t,x)|\le C'_Te^{-\nu|x|}\hbox{ for all }(t,x)\!\in\!(-\infty,T]\!\times\!\R^N.
\ee

In particular, the action
\be\label{defE}
E[u(t,\cdot)]=\int_{\R^N}\Big(\frac{|\nabla u(t,x)|^2}{2}-F(u(t,x)\Big)\,dx
\ee
is well defined at each time $t\in\R$, and Lebesgue's dominated convergence theorem implies that the function $t\mapsto E[u(t,\cdot)]$ is of class $C^1(\R)$ with
\be\label{lyapounov}
\frac{d}{dt}E[u(t,\cdot)]=-\int_{\R^N}(u_t(t,x))^2\,dx\le0
\ee
for every $t\in\R$. Furthermore,~\eqref{exp} yields $\sup_{t\le T}|E[u(t,\cdot)]|<+\infty$ for every $T\in\R$, and there is then $\ell\in\R$ such that
\be\label{Eutl}
E[u(t,\cdot)]\to\ell\ \hbox{ as }t\to-\infty.
\ee

Consider now any sequence $(t_n)_{n\in\N}$ converging to $-\infty$, and denote
$$u_n(t,x)=u(t+t_n,x)$$
for $(t,x)\in\R\times\R^N$. From~\eqref{exp} and further standard parabolic estimates, there is a classical nonnegative bounded solution $u_\infty$ of~\eqref{eq:ACN} such that, up to extraction of a subsequence, $u_n\to u_\infty$ in $C^{1,2}_{t,x}$ locally in $\R\times\R^N$, together with
$$\|u_n(t,\cdot)-u_\infty(t,\cdot)\|_{L^\infty(\R^N)}\to0\hbox{ and }E[u_n(t,\cdot)]\to E[u_\infty(t,\cdot)]\ \hbox{ as }n\to+\infty,$$
for every $t\in\R$. Notice also that $u_\infty$ satisfies~\eqref{exp} with the constant $C'_0$ in the whole set~$\R\!\times\!\R^N$. Since $E[u_n(t,\cdot)]=E[u(t+t_n,\cdot)]\to\ell$ as $n\to+\infty$, for every $t\in\R$, one infers that $E[u_\infty(t,\cdot)]=\ell$ for every $t\in\R$, hence $(u_\infty)_t\equiv 0$ in $\R\times\R^N$ from~\eqref{lyapounov} applied to $u_\infty$. As a consequence, $u_\infty$ is a bounded nonnegative steady state solving
$$\left\{\baa{l}
\Delta u_\infty+f(u_\infty)=0\hbox{ and }u_\infty\ge0\hbox{ in }\R^N,\vspace{3pt}\\
u_\infty(x)\to0\hbox{ as }|x|\to+\infty.\eaa\right.$$
From the elliptic maximum principle, it follows that either $u_\infty\equiv 0$ in $\R^N$, or $u_\infty>0$ in~$\R^N$. In the former case, one has $u(t_n,\cdot)=u_n(0,\cdot)\to0$ as $n\to+\infty$ uniformly in $\R^N$, hence~$0<u(t_n,\cdot)\le\eta$ in $\R^N$ for all $n$ large enough, where $\eta$ is a positive real number such that
\be\label{defeta}
f<0\hbox{ in }(0,\eta].
\ee
Thus, for all $t\in\R$, one gets that $0<u(t,\cdot)=u(t-t_n+t_n,\cdot)\le\zeta(t-t_n)$ in $\R^N$ for all~$n$ large enough, where~$\zeta$ obeys $\zeta(0)=\eta$ and $\zeta'(t)=f(\zeta(t))$ for all $t\ge0$. Since $\zeta(+\infty)=0$ and $\lim_{n\to+\infty}t_n=-\infty$, it follows that $u(t,\cdot)\le0$ in $\R^N$ for all $t\in\R$, a contradiction. Therefore,~$u_\infty>0$ is a steady state solving~\eqref{steady}, namely $u_\infty\in\mathcal{E}$. In particular, $\mathcal E$ is not empty.

Since in the previous paragraph the sequence $(t_n)_{n\in\N}$ converging to $-\infty$ was arbitrary, one  concludes that
$$\inf_{\phi\in\mathcal{E}}\|u(t,\cdot)-\phi\|_{L^\infty(\R^N)}\to0\ \hbox{ as }t\to-\infty,$$
namely~\eqref{conv1} has been proven. The observations of the previous paragraph also imply that
\be\label{Ephil}
E[\phi]=\ell
\ee
for every $\phi\in\mathcal{E}$ belonging to the $\alpha$-limit set of $u$.

\begin{remark}
Remember that, from assumption~\eqref{hypf2}, the map $(\emptyset\neq)\,\mathcal E\ni\phi\mapsto m_\phi$ takes a constant value, that is, there is $m>0$ such that
\be\label{mmphi}
m=m_\phi>0
\ee
for all $\phi\in\mathcal E$. The quantity $M\in(m,+\infty]$ defined in~\eqref{defMphi} and~\eqref{MMphi}-\eqref{defM} is such that
$$f>0\hbox{ in }(m,M)$$
from~\eqref{mMphi}. In the present remark, we claim that
\be\label{0uM}
0<u(t,x)<M\ \hbox{ for all }(t,x)\in\R\times\R^N.
\ee
By assumption, $u$ is positive. So there is nothing to show if $M=+\infty$. Assume now that $M<+\infty$. From the above proof of~\eqref{conv1}, there is $\phi\in\mathcal E$ and a sequence $(t_n)_{n\in\N}$ converging to $-\infty$ such that $\|u(t_n,\cdot)-\phi\|_{L^\infty(\R^N)}\to0$ as $n\to+\infty$. Since $\max_{\R^N}\!\phi<M$ by~\eqref{defMphi} and~\eqref{MMphi}, one has $u(t_n,\cdot)<M$ in $\R^N$ for all $n$ large enough. Since $f(M)=0$, it then follows from the maximum principle (applied for $n$ large enough) that, for each $t\in\R$, $u(t,\cdot)=u(t-t_n+t_n,\cdot)<M$ in $\R^N$, that is,~\eqref{0uM} holds. 
\end{remark}


\subsection{The behavior of $u$ as $t\to+\infty$}\label{sec42}

In the section, we consider the behavior of the entire solution $u$ as $t\to+\infty$. The proof is divided into five main steps.\hfill\break

{\noindent{\it{Step 1: two key-lemmas.}}} The proof of the dichotomy as $t\to+\infty$ between the uniformly localized solutions and the spreading solutions is based on two key-lemmas. The first one gives a sufficient condition for the finiteness and attractiveness of the quantity $M\in(m,+\infty]$ defined in~\eqref{defMphi} and~\eqref{MMphi}-\eqref{defM}.

\begin{lemma}\label{lem3}
For every $\epsilon>0$, there is a real number $\rho_\epsilon>0$ such that, if
\be\label{large}
u(t_0,\cdot)\ge m+\epsilon\hbox{ in }\overline{B(y_0,\rho_\epsilon)}
\ee
for some $(t_0,y_0)\in\R\times\R^N$, then
$$M<+\infty$$
and
\be\label{spreading2}
\max_{|x|\le\gamma t}|u(t,x)-M|\to0\hbox{ as }t\to+\infty
\ee
for some $\gamma>0$.\footnote{We point out that, when $\ep> M_0-m$, $\rho_\ep$ can be arbitrary because~\eqref{large} is not fulfilled in that case.}
\end{lemma}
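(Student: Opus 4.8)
The plan is to build, under hypothesis~\eqref{large}, a compactly supported subsolution that forces the solution to exceed a value strictly between $m$ and the range of $f$'s positivity, and then to use a planar-front argument to propagate this information outward at a positive speed. First I would note that since $f'(0)<0$, $f(m)=0$ and $f>0$ in $(m,M)$ (with $M$ as in~\eqref{defmphi}--\eqref{mMphi}), for every $\epsilon>0$ one can pick $\nu$ with $m<\nu<\min(m+\epsilon,M)$ and then invoke Lemma~\ref{lem2} --- applied with $\alpha=m$ (or a value slightly below $m$, to get a genuine subsolution) and $\beta$ chosen just below $M$ if $M<+\infty$, or any value in $(\nu,\infty)$ with $G(\beta)>G(s)$ for $s<\beta$ if we first truncate $f$ --- to obtain a radius $R>0$ and a stationary subsolution $\psi\le\alpha$ on $\partial B_R$ with $\max\psi=\psi(0)>\nu$. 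Setting $\rho_\epsilon:=R$, condition~\eqref{large} gives $u(t_0,\cdot)\ge m+\epsilon\ge\psi$ on $\overline{B(y_0,\rho_\epsilon)}$ (and $\ge m=\alpha\ge\psi$ trivially on the boundary after a shift of $\psi$), so the parabolic maximum principle yields $u(t,\cdot)\ge\underline u(t,\cdot)$ where $\underline u$ solves~\eqref{eq:ACN} with initial datum $\widetilde\psi$ (equal to $\psi$ on $B(y_0,R)$, extended by $\alpha$ outside). Because $\widetilde\psi$ is a generalized strict subsolution of the steady equation, $\underline u$ is increasing in $t$ and converges monotonically, locally uniformly, to a stationary solution $\phi_\infty\ge\widetilde\psi$ with $\phi_\infty(y_0)>\nu>m$.

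The key point is then that $\phi_\infty$ cannot be bounded. If $M=+\infty$ this already forces a contradiction with boundedness unless the limit is a genuine steady state of~\eqref{steady}, but such a steady state $\phi$ would have $m_\phi=m$ by~\eqref{mmphi}, and a short comparison argument shows $\phi_\infty$ cannot stop at any value $\le\max_{\R^N}\phi$ for all $\phi\in\mathcal E$; more robustly, since $f>0$ on $(m,M)$ and $\phi_\infty\ge\nu>m$ somewhere, $\phi_\infty\ge\nu>m$ on a ball, and iterating the subsolution construction on larger and larger balls (as in Lemma~\ref{lem2}, where $\max_{\overline{B_r}}\psi_r\to\beta$) shows $\phi_\infty$ must reach up to $M$; in particular $M<+\infty$ (otherwise $\phi_\infty$ is unbounded, contradicting $0<u<M_0$) and $u(t,x_0')\to M$ for $x_0'$ near $y_0$. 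Actually the cleanest route is: assume $M=+\infty$ for contradiction; then the subsolution $\underline u$ blows up or grows unboundedly (by the ODE comparison $\underline u(t,y_0)\ge$ solution of $z'=f(z)$, $z(0)=\nu$, which escapes to $+\infty$ since $f>0$ on $(m,\infty)$), contradicting boundedness of $u$. Hence $M<+\infty$.

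With $M<+\infty$ established, I would prove~\eqref{spreading2} by a standard planar-front sweeping argument. By Lemma~\ref{lem1} applied to (a truncation of) $f$ on $[0,M]$ --- noting $f(0)=f(M)=0$, $f'(0)<0$, $f>0$ in $(m,M)$, $F<0$ in $(0,m]$ by~\eqref{hypf2}, and $F(M)>0$ by~\eqref{defMphi}--\eqref{defM} --- there is a planar front $\varphi$ with speed $c>0$ connecting $M$ to $0$. Take $\gamma\in(0,c)$. On the one hand, the known lower bound $u(t,\cdot)\ge\underline u(t,\cdot)$ and the fact that $\underline u(t,\cdot)\to\phi_\infty$ with $\phi_\infty$ bounded below by $\nu$ on a fixed ball let me, after waiting a time so that $u\ge M-\delta$ on a large enough ball, slide shifted radial super-level sets of $\varphi$ (or better, one-dimensional fronts $\varphi(\pm e\cdot x - ct+\text{const})$ in every direction $e$, using the radial symmetry~\eqref{defx0} of $u$) to conclude $\liminf_{t\to\infty}\inf_{|x|\le\gamma t}u(t,x)\ge M$. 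On the other hand $u\le M$ by~\eqref{0uM} (or directly from the maximum principle once $u(t_n,\cdot)<M$), giving $\limsup\le M$ and hence~\eqref{spreading2}. The main obstacle I anticipate is the rigorous passage from the local-in-space, monotone-in-time lower bound to a genuinely moving front lower bound at speed close to $c$ --- i.e. turning "$u$ is eventually large on a big fixed ball" into "$u$ is large on an expanding ball of radius $\sim ct$"; this requires comparing with compactly supported subsolutions of the Fisher/KPP--type built from $\varphi$ (the classical Aronson--Weinberger / Fife--McLeod construction), and making sure the truncation of $f$ outside $[0,M]$ does not interfere, using $0<u<M$ from~\eqref{0uM} to stay in the region where the truncated and original nonlinearities agree.
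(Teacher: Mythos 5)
Your overall plan --- build a compactly supported stationary subsolution via Lemma~\ref{lem2}, slip it under $u$, let the parabolic flow push it up, and then propagate the high values outward --- is exactly the right spirit and matches the paper. But several of the key steps as you have written them do not go through, and the paper has to work around each one.

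First, the comparison ``$u(t_0,\cdot)\ge m+\epsilon\ge\psi$ on $\overline{B(y_0,\rho_\epsilon)}$'' is not justified. Lemma~\ref{lem2} only gives a \emph{lower} bound $\max\psi=\psi(0)>\nu$; it says nothing about an upper bound for $\psi(0)$, which in general is close to $\beta\approx M$ and far above $m+\epsilon$. The paper resolves this differently: it applies Lemma~\ref{lem2} with $\nu=m$ (so just $\psi(0)>m$), passes to a limit $v_\infty$ along a sequence of larger and larger balls so that $v_\infty(0,\cdot)\ge m+\epsilon$ everywhere, waits a time $T$ so that $v_\infty(T,\cdot)\ge\varsigma(T)>\psi(0)$ via the ODE $\varsigma'=f(\varsigma)$, and only \emph{then} inserts $\psi$ under $u$ at time $T+\tau_{n_0}$.

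Second, extending $\psi$ by $\alpha=m$ (or slightly below) outside $B_R$ gives a function that cannot be compared to $u(t_0,\cdot)$, because $u(t_0,\cdot)$ tends to $0$ at spatial infinity (see~\eqref{localized} and~\eqref{expdecay}), and thus lies \emph{below} $m$ outside some ball. The paper applies Lemma~\ref{lem2} with $\alpha=0$, $\beta=M$, so that $\psi=0$ on $\partial B_R$ and the extension by $0$ is trivially below $u>0$; this Dirichlet-zero condition is also what makes the sliding argument for $w_\infty$ close (the touching point $x^*$ is forced to be interior because $w_\infty>0=\psi$ on $\partial B_R$).

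Third, your ``cleanest route'' for $M<+\infty$ --- $\underline u(t,y_0)\ge z(t)$ with $z'=f(z)$, $z(0)=\nu$ --- is not correct: a pointwise initial bound at $y_0$ is not enough for the ODE to dominate the PDE, since the Laplacian can pull $\underline u(t,y_0)$ below $z(t)$. The paper's finiteness argument is again a limit-along-bigger-balls argument: if $M=+\infty$ and for every $n$ one could find $(t_n,y_n)$ with $u(t_n,\cdot)\ge m+\epsilon$ on $\overline{B(y_n,n)}$, then the limit $u_\infty$ satisfies $u_\infty(0,\cdot)\ge m+\epsilon$ on all of $\R^N$, and \emph{then} the ODE comparison forces $u_\infty$ unbounded. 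This contradiction defines $\varrho_\epsilon$.

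Fourth, for the spreading property~\eqref{spreading2}, the paper does \emph{not} invoke the planar front of Lemma~\ref{lem1} at this stage. It instead iterates the compactly supported subsolution: since $w(\tau,\cdot)\to M$ locally uniformly and $\max\psi<M$, one can choose $\tau>0$ with $w(\tau,\cdot)\ge w(0,\cdot-se)$ for all unit $e$ and $s\in[0,1]$, and an induction then gives $w(k\tau,\cdot)\ge w(0,\cdot-se)$ for $s\in[0,k]$, which is a linear-in-time lower bound on the super-level set $\{u\ge\psi(0)\}$; a compactness argument upgrades this to~\eqref{spreading2}. The Fife--McLeod/Aronson--Weinberger front-sweeping you sketch could in principle also give some positive $\gamma$, but it brings in Lemma~\ref{lem1} (and the traveling-wave machinery) prematurely: in the paper, Lemma~\ref{lem1} is used only in Step~5 to pin down the \emph{precise} rate $c$ in~\eqref{spreading}, whereas Lemma~\ref{lem3} is kept self-contained and elementary.
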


\begin{remark}
For a function $f$ such that $M$ is a priori assumed to be finite, the conclusion~\eqref{spreading2} can also be viewed as a consequence of~\cite[Lemma~2.4]{DP}, which is based on the existence of approximated planar fronts defined in bounded intervals (see also~\cite[Theorem~3.2]{FM} and~\cite[Theorem~6.2]{AW} for related results with more specific nonlinearities $f$ in the one- and multi-dimensional cases). We here both show~\eqref{spreading2} and the finiteness of $M$ under assumption~\eqref{large}. Moreover, the proof of~\eqref{spreading2} given below differs from that of~\cite[Lemma~2.4]{DP} as it is based on Lemma~\ref{lem2} and on the existence of compactly supported steady states.
\end{remark}

\begin{proof}[Proof of Lemma~$\ref{lem3}$] Let $\epsilon>0$ be fixed throughout the proof. Assume first, by way of contradiction, that $M=+\infty$, and that there exists a sequence $(t_n,y_n)_{n\in\N}$ in $\R\times\R^N$ such that $u(t_n,\cdot)\ge m+\epsilon$ in $\overline{B(y_n,n)}$. From~\eqref{mMphi}, it then follows that $f>0$ in $(m,+\infty)$. From standard parabolic estimates, the functions
$$u_n:(t,x)\mapsto u_n(t,x)=u(t+t_n,x+y_n)$$
converge in $C^{1,2}_{t,x}$ locally in $\R\times\R^N$, up to extraction of a subsequence, to a nonnegative bounded solution $u_\infty$ of~\eqref{eq:ACN} such that $u_\infty(0,\cdot)\ge m+\epsilon$ in~$\R^N$. Hence, $u_\infty(t,\cdot)\ge\varsigma(t)$ in~$\R^N$ for all $t\ge0$, where $\varsigma$ obeys
\be\label{defvarsigma}\left\{\baa{l}
\varsigma'(t)=f(\varsigma(t))\ \hbox{ for }t\ge0,\vspace{3pt}\\
\varsigma(0)=m+\epsilon.\eaa\right.
\ee
This is impossible since $u_\infty$ is bounded while $f>0$ in $[m+\epsilon,+\infty)$. As a consequence, there is $\varrho_\epsilon>0$ such that if
$$u(t_0,\cdot)\ge m+\epsilon\hbox{ in $\overline{B(y_0,\varrho_\epsilon)}$}$$
for some $(t_0,y_0)\in\R\times\R^N$, then
$$M<+\infty.$$

We now claim that there is $\rho_\epsilon\in[\varrho_\epsilon,+\infty)$ such that, if condition~\eqref{large} is fulfilled for some~$(t_0,y_0)\in\R\times\R^N$, then $M<+\infty$ (from the previous paragraph) and
\be\label{spreading4}
u(t,\cdot)\to M\hbox{ as }t\to+\infty\hbox{ locally uniformly in $\R^N$.}
\ee
Assume not. Then there is a sequence $(\tau_n,z_n)_{n\in\N,\,n\ge\varrho_\epsilon}$ in $\R\times\R^N$ such that $u(\tau_n,\cdot)\ge m+\epsilon$ in $\overline{B(z_n,n)}$ (hence, $M<+\infty$) and
\be\label{contra}
u(t,\cdot)\not\to M\hbox{ as $t\to+\infty$ locally uniformly in $\R^N$}.
\ee
Notice that~\eqref{0uM} then implies that
$$m<m+\epsilon<M.$$
On the other hand, since $F<0$ in $(0,m]$ by~\eqref{hypf2} and~\eqref{mmphi}, since $F(M)>0$ by~\eqref{MMphi}-\eqref{defM} and since $f>0$ in $(m,M)$ by~\eqref{mMphi}, one infers that $F(s)<F(M)$ for all $s\in[0,M)$. Lemma~\ref{lem2} applied with $\alpha=0$, $\beta=M$, $\gamma=m$ and $g=f$ yields the existence of $R>0$ and a~$C^2(\overline{B_R})$ function $\psi$ such that
\be\label{eqpsi2}\left\{\baa{rcll}
\Delta\psi+f(\psi) & \!=\! & 0 & \hbox{in }\overline{B_R},\vspace{3pt}\\
0\ \le\ \psi & \!<\! & M & \hbox{in }\overline{B_R},\vspace{3pt}\\
\psi & \!=\! & 0 & \hbox{on }\partial B_R,\vspace{3pt}\\
\displaystyle M>\mathop{\max}_{\overline{B_R}}\psi\ =\ \psi(0) & \!>\! & m>0.& \eaa\right.
\ee
Let $\varsigma$ be the solution of~\eqref{defvarsigma}. Since~$m+\epsilon<M$ and $f>0$ in $(m,M)$ with $f(M)=0$, one has $\varsigma(t)\to M$ as $t\to+\infty$. Hence, there is a positive real number $T>0$ such that
\be\label{defT}
\varsigma(T)>\psi(0).
\ee
Up to extraction of a subsequence, the functions
$$v_n:(t,x)\mapsto v_n(t,x)=u(t+\tau_n,x+z_n)$$
converge in $C^{1,2}_{t,x}$ locally in $\R\times\R^N$ to a nonnegative bounded solution $v_\infty$ of~\eqref{eq:ACN} such that $v_\infty(0,\cdot)\ge m+\epsilon$ in $\R^N$. Hence, $v_\infty(T,\cdot)\ge\varsigma(T)>\psi(0)$. It then follows from the last line in~\eqref{eqpsi2} that there is $n_0\in\N$ (with $n_0\ge\varrho_\epsilon$) such that
\be\label{uTw}
u(T+\tau_{n_0},\cdot+z_{n_0})>\psi\ \hbox{ in }\overline{B_R}.
\ee

Let then $w$ be the solution of the equation $w_t=\Delta w+f(w)$ in $(0,+\infty)\times\R^N$ with initial condition given by
\be\label{defw0}
w(0,x)=\left\{\baa{ll}\psi(x) & \hbox{if }x\in\overline{B_R},\vspace{3pt}\\
0 & \hbox{if }x\in\R^N\setminus\overline{B_R}.\eaa\right.
\ee
Since $\psi$ satisfies~\eqref{eqpsi2} and $f(M)=f(0)=0$, the parabolic maximum principle implies that~$0<w<M$ in $(0,+\infty)\times\R^N$ and $w$ is increasing with respect to $t$ in $[0,+\infty)\times\R^N$. From standard parabolic estimates and uniqueness of the limit, there is a $C^2(\R^N)$ solution~$w_\infty$ of~$\Delta w_\infty+f(w_\infty)=0$ in $\R^N$ with $0<w_\infty\le M$ in $\R^N$ and $w_\infty>\psi$ in $\overline{B_R}$. Let then $e$ be any unit vector of $\R^N$. By continuity, there is $s_0>0$ such that $w_\infty>\psi(\cdot-se)$ in $\overline{B(se,R)}$ for all $s\in[0,s_0]$. Calling
$$s^*=\sup\big\{s>0:w_\infty>\psi(\cdot-s'e)\hbox{ in }\overline{B(s'e,R)}\hbox{ for all }s'\in[0,s]\big\}\in[s_0,+\infty]$$
and assuming that $s^*<+\infty$, one would have $w_\infty\ge\psi(\cdot-s^*e)$ in $\overline{B(s^*e,R)}$ with equality somewhere at a point $x^*\in\overline{B(s^*e,R)}$. Since $w_\infty>0$ in $\R^N$ and $\psi=0$ on $\partial B_R$, the point~$x^*$ would be an interior point in $B(s^*e,R)$, hence $w_\infty\equiv\psi(\cdot-s^*e)$ in $\overline{B(s^*e,R)}$ from the strong maximum principle. This is impossible on $\partial B(s^*,R)$. Thus $s^*=+\infty$ and, since the unit vector $e$ was arbitrary, one gets that $(M\ge)\,w_\infty>\psi(0)>m$ in $\R^N$. The positivity of~$f$ in $(m,M)$ then implies that $w_\infty\equiv M$ in $\R^N$, hence
\be\label{wM}
w(t,\cdot)\to M\hbox{ as }t\to+\infty\hbox{ locally uniformly in }\R^N.
\ee
Together with~\eqref{uTw}-\eqref{defw0} and the maximum principle, one infers that
$$\liminf_{t\to+\infty}\Big(\min_Ku(t,\cdot)\Big)\ge M$$
for any compact set $K\subset\R^N$, and finally $u(t,\cdot)\to M$ as $t\to+\infty$ locally uniformly in $\R^N$ from~\eqref{0uM}. This contradicts~\eqref{contra}.

As a conclusion of the previous paragraph, there is a positive real number $\rho_\epsilon(\ge\varrho_\epsilon)$ such that if condition~\eqref{large} is fulfilled for some $(t_0,y_0)\in\R\times\R^N$, then $M<+\infty$ and~\eqref{spreading4} holds. Let us finally show that this implies the stronger property~\eqref{spreading2}: $\max_{|x|\le\gamma t}|u(t,x)-M|\to0$ as $t\to+\infty$, for some~$\gamma>0$. To do so, observe on the one hand that, since $w(t,\cdot)\to M$ as~$t\to+\infty$ locally uniformly in $\R^N$ by~\eqref{wM} and since $\max_{\overline{B_R}}\psi<M$, there is a time $\tau>0$ such that
$$w(\tau,\cdot)\ge\psi(\cdot-se)\ \hbox{ in }\overline{B(se,R)}\ \hbox{ for every unit vector $e$ and every $s\in[0,1]$}.$$
In other words, $w(\tau,\cdot)\ge w(0,\cdot-se)$ in $\R^N$ for every unit vector $e$ and every $s\in[0,1]$. From the maximum principle, one gets that $w(2\tau,\cdot)\ge w(\tau,\cdot-se)\ge w(0,\cdot-2se)$ in $\R^N$ for every unit vector $e$ and every $s\in[0,1]$. Hence, by an immediate induction,
$$w(k\tau,\cdot)\ge w(0,\cdot-se)\hbox{ in }\R^N\hbox{ for every $k\in\N$, every unit vector $e$, and every $s\in[0,k]$}.$$
On the other hand,~\eqref{spreading4} and~\eqref{eqpsi2} yield the existence of a time $\tau_*>0$ such that
$$u(\tau_*,\cdot)\ge\psi\ \hbox{ in }\overline{B_R},$$
that is, $u(\tau_*,\cdot)\ge w(0,\cdot)$ in $\R^N$. Therefore,
$$u(\tau_*+k\tau,\cdot)\ge w(0,\cdot-se)\hbox{ in }\R^N$$
for every $k\in\N$, every unit vector $e$, and every $s\in[0,k]$. In particular,
\be\label{ineqk}
\min_{\overline{B_k}}u(\tau_*+k\tau,\cdot)\ge w(0,0)=\psi(0)\ \hbox{ for all }k\in\N.
\ee

We finally claim that
\be\label{claimspreading}
\max_{|x|\le t/(2\tau)}|u(t,x)-M|\to0\ \hbox{ as }t\to+\infty,
\ee
which will give the desired conclusion~\eqref{spreading2} with $\gamma=1/(2\tau)$. Assume by way of contradiction that~\eqref{claimspreading} does not hold. Since $0<u<M$ in $\R\times\R^N$ by~\eqref{0uM}, there are then a real number~$\theta\in[0,M)$ and a sequence $(s_n,\xi_n)_{n\in\N}$ in $(0,+\infty)\times\R^N$ such that
\be\label{deftheta}
\lim_{n\to+\infty}s_n=+\infty,\ \ \lim_{n\to+\infty}u(s_n,\xi_n)=\theta,\ \hbox{ and }\ |\xi_n|\le\frac{s_n}{2\tau}\hbox{ for all }n\in\N.
\ee
Consider any integer $j\in\N$. For all $n$ large enough, write
\be\label{snkn}
s_n=\tau_*+k_n\tau+s'_n,\ \hbox{ with $k_n\in\N$ and $s'_n\in[j\tau,(j+1)\tau)$}
\ee
(the quantities $k_n$ and $s'_n$ depend on $j$ as well, but this does not matter). Up to extraction of a subsequence, there is $s'_\infty\in[j\tau,(j+1)\tau]$ such that $s'_n\to s'_\infty$ as $n\to+\infty$. Up to extraction of another subsequence, the functions
$$U_n:(t,x)\mapsto U_n(t,x)=u(t+\tau_*+k_n\tau,x+\xi_n)$$
converge in $C^{1,2}_{t,x}$ locally in $\R\times\R^N$ to a solution $U_\infty$ of~\eqref{eq:ACN} such that $0\le U_\infty\le M$ in $\R\times\R^N$. For each $x\in\R^N$, one has $|x+\xi_n|\le k_n$ for all $n$ large enough, since $|\xi_n|\le s_n/(2\tau)$ for all $n$ and $s_n\sim k_n\tau$ as $n\to+\infty$ from~\eqref{snkn} and $\lim_{n\to+\infty}s_n=+\infty$. It then follows from~\eqref{ineqk} that $U_\infty(0,\cdot)\ge\psi(0)$ in $\R^N$, hence
$$U_\infty(t,\cdot)\ge\omega(t)\ \hbox{ in $\R^N$ for all $t\ge0$},$$
where $\omega$ obeys $\omega'(t)=f(\omega(t))$ and $\omega(0)=\psi(0)$. Since $m<\psi(0)<M$ and $f>0$ in $(m,M)$ with $f(M)=0$, one has $\omega(t)\to M$ as $t\to+\infty$ (notice that $U_\infty$ and $s'_\infty\in[j\tau,(j+1)\tau]$ depend on $j\in\N$, but $\psi(0)$ and $\omega$ do not). It also follows from~\eqref{deftheta}-\eqref{snkn} that $U_\infty(s'_\infty,0)=\theta<M$, hence $M>\theta\ge\omega(s'_\infty)$. Since $s'_\infty\in[j\tau,(j+1)\tau]$ and $\omega(+\infty)=M$, the passage to the limit as~$j\to+\infty$ in the inequality $M>\theta\ge\omega(s'_\infty)$ leads to a contradiction. As a conclusion,~\eqref{claimspreading} has been shown and the proof of Lemma~\ref{lem3} is thereby complete.
\end{proof}

The second key-lemma gives a quantitative estimate of the time the solution takes to go from $m+\epsilon$ to any value $\lambda$ less than $M$ in large balls.

\begin{lemma}\label{lem4}
Under the notations of Lemma~$\ref{lem3}$, for every $\epsilon>0$, $\lambda<M$ and $r\ge0$, there are some real numbers $\rho_{\epsilon,\lambda,r}\ge\rho_\epsilon>0$ and $T_{\epsilon,\lambda,r}>0$ such that, if
$$u(t_0,\cdot)\ge m+\epsilon\hbox{ in }\overline{B(y_0,R)}$$
for some $(t_0,y_0)\in\R\times\R^N$ and $R\ge\rho_{\epsilon,\lambda,r}$, then
$$u(t,\cdot)\ge\lambda\hbox{ in }\overline{B(y_0,R+r)}\ \hbox{ for all }t\ge t_0+T_{\epsilon,\lambda,r}.$$
\end{lemma}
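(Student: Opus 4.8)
\emph{Plan.} By translation in $(t,x)$ we may take $t_0=0$ and $y_0=0$, and we may assume $m+\epsilon<M$ (otherwise, by \eqref{0uM}, the hypothesis $u(0,\cdot)\ge m+\epsilon$ in $\overline{B_R}$ is never satisfied and the statement is vacuous). Since we shall require $R\ge\rho_{\epsilon,\lambda,r}\ge\rho_\epsilon$, Lemma~\ref{lem3} already gives $M<+\infty$. Fix $\nu\in(\max(m,\lambda),M)$. Using that $F(s)<F(M)$ for all $s\in[0,M)$ (established in the proof of Lemma~\ref{lem3}), Lemma~\ref{lem2} applied with $\alpha=0$, $\beta=M$, $g=f$ produces $R_*>0$ and $\psi\in C^2(\overline{B_{R_*}})$ with $\Delta\psi+f(\psi)=0$ in $\overline{B_{R_*}}$, $0\le\psi<M$, $\psi=0$ on $\partial B_{R_*}$ and $\max_{\overline{B_{R_*}}}\psi=\psi(0)>\nu>\max(m,\lambda)$. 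Let $\widetilde\psi=\psi$ on $\overline{B_{R_*}}$ and $\widetilde\psi=0$ elsewhere, and let $w$ solve \eqref{eq:ACN} with $w(0,\cdot)=\widetilde\psi$. Exactly as in the proof of Lemma~\ref{lem3}: $0<w<M$, $w$ is nondecreasing in $t$, $w(t,\cdot)\to M$ locally uniformly as $t\to+\infty$, and there is $\tau>0$ with $w(k\tau,\cdot)\ge\widetilde\psi(\cdot-se)$ in $\R^N$ for all $k\in\N$, all unit vectors $e$ and all $s\in[0,k]$. Choosing $e=x/|x|$, $s=|x|$ and using monotonicity in $t$ yields the invasion estimate
\[
w(t,x)\ge\psi(0)>\lambda\qquad\text{whenever }t\ge\tau\text{ and }|x|\le t/\tau-1.
\]

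The first, and main, step is a quantitative \emph{ignition estimate}: there are $T_1>0$ and $\rho_1\ge R_*$, depending only on $\epsilon$, $\lambda$ (and $f$, $N$), such that if $u(0,\cdot)\ge m+\epsilon$ in $\overline{B_R}$ with $R\ge\rho_1$, then $u(T_1,\cdot)\ge\widetilde\psi(\cdot-y)$ in $\R^N$ for every $y$ with $|y|\le R-\rho_1$. Let $\varsigma$ solve $\varsigma'=f(\varsigma)$, $\varsigma(0)=m+\epsilon$; since $m+\epsilon\in(m,M)$, $f>0$ in $(m,M)$ and $f(M)=0$, one has $\varsigma(t)\uparrow M$, so we fix $T_1$ with $\varsigma(T_1)>\psi(0)$. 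For $\rho>0$ let $\underline u_\rho$ be the solution of \eqref{eq:ACN} on $\R^N$ with initial datum $(m+\epsilon)\,\mathbf{1}_{\overline{B_\rho}}$; by the comparison principle $\underline u_\rho$ is nondecreasing in $\rho$ and bounded above by $\varsigma$, and since its datum converges pointwise and monotonically to the constant $m+\epsilon$ its limit is the bounded solution of \eqref{eq:ACN} with constant datum $m+\epsilon$, i.e. $\underline u_\rho\uparrow\varsigma$ (in particular $\underline u_\rho(T_1,0)\to\varsigma(T_1)$). Fix $\rho_0>0$ with $\underline u_{\rho_0}(T_1,0)\ge\psi(0)$, and set $\rho_1:=\rho_0+R_*$. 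Now assume $u(0,\cdot)\ge m+\epsilon$ in $\overline{B_R}$ with $R\ge\rho_1$. For each $x$ with $|x|\le R-\rho_0$ one has $\overline{B(x,\rho_0)}\subset\overline{B_R}$, so $u(0,\cdot)\ge m+\epsilon\ge(m+\epsilon)\,\mathbf{1}_{\overline{B(x,\rho_0)}}=\underline u_{\rho_0}(0,\cdot-x)$, whence $u(T_1,\cdot)\ge\underline u_{\rho_0}(T_1,\cdot-x)$ and in particular $u(T_1,x)\ge\underline u_{\rho_0}(T_1,0)\ge\psi(0)$. Thus $u(T_1,\cdot)\ge\psi(0)$ on $\{|x|\le R-\rho_0\}$, and since $u>0$ everywhere and $\psi\le\psi(0)$ this gives $u(T_1,\cdot)\ge\widetilde\psi(\cdot-y)$ in $\R^N$ for all $y$ with $|y|\le R-\rho_0-R_*=R-\rho_1$, as claimed.

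With the ignition estimate in hand, the conclusion follows by propagation of $w$. By the parabolic comparison principle, $u(T_1,\cdot)\ge\widetilde\psi(\cdot-y)=w(0,\cdot-y)$ implies $u(T_1+t,\cdot)\ge w(t,\cdot-y)$ for all $t\ge0$, hence $u(T_1+t,x)\ge\sup_{|y|\le R-\rho_1}w(t,x-y)$. Given $x$ with $|x|\le R+r$, choose $y_x$ on the segment $[0,x]$ with $|y_x|=\min(|x|,R-\rho_1)$, so that $|y_x|\le R-\rho_1$ and $|x-y_x|=\max\big(0,|x|-(R-\rho_1)\big)\le r+\rho_1$; the invasion estimate then gives $w(t,x-y_x)\ge\psi(0)>\lambda$ as soon as $t\ge(r+\rho_1+1)\tau$. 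Therefore $u(t,x)\ge\lambda$ for all $|x|\le R+r$ and all $t\ge T_1+(r+\rho_1+1)\tau$, so the statement holds with $\rho_{\epsilon,\lambda,r}:=\max(\rho_1,\rho_\epsilon)$ and $T_{\epsilon,\lambda,r}:=T_1+(r+\rho_1+1)\tau$; the general $(t_0,y_0)$ case follows by translating back.

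The step I expect to be the real obstacle is the ignition estimate, i.e. obtaining a loss of radius $\rho_1$ that is \emph{independent of $R$}: one must show that a solution lying above the threshold $m$ on a huge ball rises above a prescribed level $\psi(0)<M$ on a concentric ball whose radius is smaller only by a fixed amount. A naive radial comparison in $N\ge2$ with a one-dimensional profile in terms of $\dist(\cdot,\partial B_R)$ only yields an \emph{upper} bound, because the curvature term $\tfrac{N-1}{r}\partial_r$ has the wrong sign for a radially decreasing profile; the device above circumvents this by comparing from below, around each interior point, with the (spatially homogeneous in the limit) Cauchy solutions $\underline u_\rho$ on a fixed large inscribed ball. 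Everything else — the properties of $w$ taken from the proof of Lemma~\ref{lem3}, and the geometric covering argument — is routine and uses only that $w$ invades at a positive speed.
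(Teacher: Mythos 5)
Your proof is correct and rests on the same key ingredients as the paper's: the ODE $\varsigma$ and the finite-radius Cauchy solutions $\underline u_\rho$ (the paper's $v_\varrho$) to boost $u$ above $\psi(0)$ after a fixed time, the compactly supported function $\psi$ from Lemma~\ref{lem2}, and the spreading solution $w$. The only structural difference is that the paper argues by contradiction, so it need only control a single bad point $z_n$ by shifting $v_{\varrho_0}$ to a suitable $y'_n$ and using the local uniform convergence $w\to M$, whereas you argue directly and handle the ``loss of radius independent of $R$'' with an explicit covering plus the iterated invasion estimate for $w$; this is an equivalent and valid reorganization of the same idea.
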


\begin{proof}
Let us fix $\epsilon>0$, $\lambda<M$ and $r\ge0$, and let $\rho_\epsilon>0$ be given by Lemma~\ref{lem3}. Assume by way of contradiction that the conclusion of Lemma~\ref{lem4} does not hold. Then there exist two sequences $(R_n)_{n\in\N}$ and $(T_n)_{n\in\N}$ of positive real numbers converging to $+\infty$, and a sequence~$(t_n,y_n,z_n)_{n\in\N}$ in $\R\times\R^N\times\R^N$ such that
\be\label{defynzn}
u(t_n,\cdot)\ge m+\epsilon\hbox{ in }\overline{B(y_n,R_n)},\ z_n\in\overline{B(y_n,R_n+r)},\hbox{ and }u(t_n+T_n,z_n)<\lambda,
\ee
for all $n\in\N$. Notice that Lemma~\ref{lem3} then implies that $M<+\infty$, and that $m+\epsilon<M$ by~\eqref{0uM} and~\eqref{defynzn}.

Let now $R>0$ and $\psi\in C^2(\overline{B_R})$ be as in~\eqref{eqpsi2}, let $w$ be the solution of the Cauchy problem $w_t=\Delta w+f(w)$ in~$(0,+\infty)\times\R^N$ with initial condition $w(0,\cdot)$ given by~\eqref{defw0}, and let $\varsigma\in C^1([0,+\infty))$ and $T>0$ be defined as in~\eqref{defvarsigma} and \eqref{defT}. For any $\varrho>0$, call $v_\varrho$ the solution of $(v_\varrho)_t=\Delta v_\varrho+f(v_\varrho)$ in~$(0,+\infty)\times\R^N$ with initial condition $v_\varrho(0,\cdot)$ defined by: 
$$v_\varrho(0,x)=\left\{\begin{array}{ll}
m+\epsilon & \hbox{if $x\in B_\varrho$},\vspace{3pt}\\
0 & \hbox{if $x\in\R^N\setminus B_\varrho$.}\end{array}\right.$$
From standard parabolic estimates, there holds $v_\varrho(T,\cdot)\to\varsigma(T)\ (>\psi(0))$ as~$\varrho\to+\infty$ locally uniformly in $\R^N$ (e.g. see \cite[Theorem 4.1]{GNSY}). Hence, there is $\varrho_0>0$ such that~$v_{\varrho_0}(T,\cdot)>\psi(0)$ in~$\overline{B_R}$, and then
$$v_{\varrho_0}(T,\cdot)>w(0,\cdot)\hbox{ in $\R^N$}.$$
Since $\lambda<M$ by assumption and since $w(t,\cdot)\to M$ as $t\to+\infty$ locally uniformly in $\R^N$ by~\eqref{wM}, there is $T'>0$ such that
\be\label{defT'}
w(t,\cdot)\ge\lambda\hbox{ in $\overline{B_{r+\varrho_0}}$ for all $t\ge T'$}.
\ee 

Notice that the parameters and functions introduced in the previous paragraph do not depend on $n$. Coming back to~\eqref{defynzn}, one can assume without loss of generality that $R_n\ge\varrho_0$ for all $n\in\N$. Hence, by~\eqref{defynzn}, for each $n\in\N$, there is a point $y'_n$ such that
\be\label{defy'n}
|z_n-y'_n|\le r+\varrho_0\hbox{ and $B(y'_n,\varrho_0)\subset B(y_n,R_n)$},
\ee
and thus $u(t_n,\cdot)\ge v_{\varrho_0}(0,\cdot-y'_n)$ in $\R^N$. The maximum principle then yields
$$u(t_n+T,\cdot)\ge v_{\varrho_0}(T,\cdot-y'_n)>w(0,\cdot-y'_n)\hbox{ in $\R^N$}$$
and $u(t_n+t,\cdot)>w(t-T,\cdot-y'_n)$ in $\R^N$ for all $t\ge T$. For all $n$ large enough so that $T_n\ge T+T'$, it then follows that $u(t_n+T_n,z_n)>w(T_n-T,z_n-y'_n)\ge\lambda$ by~\eqref{defT'}-\eqref{defy'n}, contradicting the last property of~\eqref{defynzn}.

To sum up, the existence of the sequences $(R_n)_{n\in\N}$, $(T_n)_{n\in\N}$ and~$(t_n,y_n,z_n)_{n\in\N}$ is ruled out and the proof of Lemma~\ref{lem4} is thereby complete.
\end{proof}

\noindent{\it{Step 2: a dichotomy.}} Fix a real number $\eta>0$ such that~\eqref{defeta} holds. Since $f(m)=f(m_\phi)=0$ for all $\phi\in\mathcal E$, with $m_\phi>0$, one has
$$0<\eta<m.$$
Define $g(s)=-f(-s)$ for all $s\in[-m,0]$. From assumption~\eqref{hypf2}, the $C^1([-m,0])$ function~$g$ satis\-fies~\eqref{hypg} with $\alpha=-m$ and $\beta=0$. Lemma~\ref{lem2} applied with $\nu=-\eta\in(-m,0)$ then provides the existence of $R_1>0$ and of a function $\psi\in C^2(\overline{B_{R_1}})$ solving $\Delta\psi+g(\psi)=0$ and $-m\le\psi<0$ in~$\overline{B_{R_1}}$ with $\psi=-m$ on $\partial B_{R_1}$ and $\max_{\overline{B_{R_1}}}\psi=\psi(0)>-\eta$. In other words, the function $\varphi=-\psi\in C^2(\overline{B_{R_1}})$ solves
\be\label{eqvarphi3}\left\{\baa{rcll}
\Delta\varphi+f(\varphi) & \!=\! & 0 & \hbox{in }\overline{B_{R_1}},\vspace{3pt}\\
0\ <\ \varphi & \!\le\! & m & \hbox{in }\overline{B_{R_1}},\vspace{3pt}\\
\varphi & \!=\! & m & \hbox{on }\partial B_{R_1},\vspace{3pt}\\
\displaystyle\mathop{\min}_{\overline{B_{R_1}}}\varphi\ =\ \varphi(0) & \!<\! & \eta. &\eaa\right.
\ee
Furthermore, it follows from~\cite{GNN1} that $\varphi$ is radially symmetric, namely there is a $C^2([0,R_1])$ function $\widetilde\varphi$ such that
\be\label{deftildevarphi}
\varphi(x)=\widetilde\varphi(|x|)\hbox{ for all $x\in\overline{B_{R_1}}$}
\ee
and the Hopf lemma (or, here, the Cauchy-Lipschitz theorem) implies that
\be\label{defdelta}
\delta:=\widetilde\varphi'(R_1)>0.
\ee

Since $u$ is bounded by assumption, it follows from standard parabolic estimates that there is a positive constant $M_2$ such that
\be\label{defM2}
|u_{x_i,x_j}(t,x)|\le M_2\ \hbox{ for all }(t,x)\in\R\times\R^N\hbox{ and }1\le i,j\le N.
\ee
From Lemma~\ref{lem3} applied with $\epsilon=\delta^2/(4M_2)>0$, there is a real number
\be\label{defR2}
R_2=\rho_\epsilon=\rho_{\delta^2/(4M_2)}>0
\ee
such that, if $u(t_0,\cdot)\ge m+\delta^2/(4M_2)$ in $\overline{B(y_0,R_2)}$ for some $(t_0,y_0)\in\R\times\R^N$, then $M<+\infty$ and $\max_{|x|\le\gamma t}|u(t,x)-M|\to0$ as $t\to+\infty$ for some $\gamma>0$. Here, by choosing $M_2$ large if necessary, we may assume that $\ep=\delta^2/(4M_2)<M-m$.

Remember now that $x_0\in\R^N$ is a center of symmetry given by~\eqref{defx0} and that $u$ is localized for $t\le0$, in the sense of~\eqref{localized}. There is then a point $x_1\in\R^N$ such that
$$|x_1-x_0|\ge R_2+\frac{\delta}{2M_2}+R_1\ \hbox{ and }\ u(t,\cdot)<\varphi(0)\hbox{ in }\overline{B(x_1,R_1)}\hbox{ for all }t\le 0.$$
We shall then compare $u$ with $\varphi(\cdot-x_1)$ in $\overline{B(x_1,R_1)}$. First of all, owing to~\eqref{eqvarphi3}, one has $u(t,\cdot)<\varphi(\cdot-x_1)$ in $\overline{B(x_1,R_1)}$ for all $t\le 0$. Two cases may then occur:
\be\label{case1}
\hbox{either }u(t,\cdot)<\varphi(\cdot-x_1)\hbox{ in $\overline{B(x_1,R_1)}$ for all $t\in\R$},
\ee
\be\label{case2}\baa{l}
\hbox{or there is }t_0\in\R\hbox{ such that }u(t,\cdot)<\varphi(\cdot-x_1)\hbox{ in $\overline{B(x_1,R_1)}$ for all $t<t_0$}\vspace{3pt}\\
\qquad\hbox{and $u(t_0,\cdot)\le\varphi(\cdot-x_1)$ in $\overline{B(x_1,R_1)}$ with equality somewhere in $\overline{B(x_1,R_1)}$}.\eaa
\ee
It will turn out that~\eqref{case1} will lead to the conclusions~(i) or~(ii) of Theorem~\ref{th1}, whereas~\eqref{case2} will lead to the spreading case~(iii). We consider in Step~3 the alternative~\eqref{case1}, while~\eqref{case2} will be dealt with in Steps~4 and~5.\hfill\break

\noindent{\it{Step 3: convergence at large times if $u$ is uniformly localized.}} We assume here that~\eqref{case1} holds. Thus, $u(t,x_1)<\varphi(0)<\eta$ for all $t\in\R$ and property~\eqref{defx0} implies that $u(t,x)<\varphi(0)<\eta$ for all $(t,x)\in\R\times\R^N$ with $|x-x_0|\ge|x_1-x_0|$. The arguments used in Remark~\ref{rem15} then yield
$$u(t,x)\to0\hbox{ as $|x|\to+\infty$ uniformly in $t\in\R$},$$
that is, $u$ is uniformly localized. From~\cite[Theorem~1.1]{BJP} (see also~\cite{FP}) and standard para\-bolic estimates, it follows that either $u(t,\cdot)\to0$ as $t\to+\infty$ in $H^1(\R^N)\cap C^2(\R^N)$ (that is, the alternative~(i) holds in Theorem~\ref{th1}), or there is positive steady state $\phi\in\mathcal{E}$ solving~\eqref{steady} such that $u(t,\cdot)\to\phi$ as $t\to+\infty$ in $H^1(\R^N)\cap C^2(\R^N)$ (that is, the alternative~(ii) holds in Theorem~\ref{th1}). Notice that, in the former case, the action $E[u(t,\cdot)]$ defined by~\eqref{defE} satisfies~$E[u(t,\cdot)]\to E[0]=0$ as $t\to+\infty$, while in the latter case,
\be\label{Ephi}
E[u(t,\cdot)]\to E[\phi]\hbox{ as~$t\to+\infty$}.
\ee
In all cases, the function $t\mapsto E[u(t,\cdot)]$ is then bounded in $\R$.\hfill\break

\noindent{\it{Step 4: the transition is radially bounded if $u$ spreads, proof of~\eqref{defxit}.}} We assume in the sequel that~\eqref{case2} holds. We shall see that this case leads to the alternative~(iii) of the conclusion of Theorem~\ref{th1}. We prove the property~\eqref{defxit} in the present Step~4, and property~\eqref{spreading} in Step~5. The proof of~\eqref{defxit} is based on the maximum principle and on suitable estimates on the oscillations of the radial positions of the level sets of $u$ at large time, as well as on the key-lemmas of Step~1.

First of all, since $\varphi$ solves~\eqref{eqvarphi3}, the alternative~\eqref{case2} and the parabolic strong maximum principle imply that there is a point $x_2\in\partial B(x_1,R_1)$ such that
$$u(t_0,x_2)=\varphi(x_2-x_1)=m.$$
Because of~\eqref{defx0},~\eqref{case2} and of the inequality $|x_1-x_0|\ge R_2+\delta/(2M_2)+R_1>R_1$, together with the fact that $\varphi<m$ in $B_{R_1}$ (from the elliptic strong maximum principle), it turns out that~$x_2$ is the unique point lying at the intersection of the sphere $\partial B(x_1,R_1)$ and the segment~$[x_0,x_1]$. In particular,
$$|x_2-x_0|=|x_1-x_0|-R_1\ge R_2+\frac{\delta}{2M_2}.$$
Furthermore, from ~\eqref{case2} and the definitions of $\widetilde{\varphi}$ and $\delta$ satisfying \eqref{deftildevarphi}-\eqref{defdelta}, it follows that
$$-|\nabla u(t_0,x_2)|=\nabla u(t_0,x_2)\cdot\frac{x_2-x_0}{|x_2-x_0|}\le-\delta<0.$$
Together with~\eqref{defM2} and~\eqref{defx0} again, one infers that
$$\nabla u(t_0,x)\cdot\frac{x-x_0}{|x-x_0|}\le-\frac{\delta}{2}<0\ \hbox{ for all }x\hbox{ such that }|x_2-x_0|-\frac{\delta}{2M_2}\le|x-x_0|\le|x_2-x_0|.$$
Since $u(t_0,\cdot)=m$ at the point $x_2$ and then on $\partial B(x_0,|x_2-x_0|)$, it follows that
$$u(t_0,\cdot)\ge m+\frac{\delta^2}{4M_2}\hbox{ on }\partial B\big(x_0,|x_2-x_0|-\frac{\delta}{2M_2}\big),\hbox{ with }|x_2-x_0|-\frac{\delta}{2M_2}\ge R_2,$$
hence
\be\label{inequmeps}
u(t_0,\cdot)\ge m+\frac{\delta^2}{4M_2}\hbox{ in }\overline{B(x_0,R_2)}
\ee
by~\eqref{defx0} again. Lemma~\ref{lem3} and the definition~\eqref{defR2} of $R_2$ then imply that
$$M<+\infty$$
and
\be\label{spreading3}
\max_{|x|\le\gamma t}|u(t,x)-M|\to0\hbox{ as $t\to+\infty$}
\ee
for some $\gamma>0$.

Secondly,~\eqref{defx0} together with~\eqref{expdecay} and~\eqref{spreading3} yield the existence of a real number $\tau_1$ such that
\be\label{deftau1}
\hbox{for each }t\ge\tau_1,\left\{\baa{l}
\displaystyle\max_{\R^N}u(t,\cdot)=u(t,x_0)>m\vspace{3pt}\\
\hbox{there is a unique }\xi(t)>0\hbox{ such that $u(t,\cdot)=m\hbox{ on }\partial B(x_0,\xi(t))$},\eaa\right.
\ee
and $\liminf_{t\to+\infty}\xi(t)/t\ge\gamma>0$. In particular,
\be\label{xiinfty}
\lim_{t\to+\infty}\xi(t)=+\infty.
\ee
The implicit function theorem with~\eqref{defx0} implies that the function $t\mapsto\xi(t)$ is of class $C^1([\tau_1,+\infty))$. Define also $\xi(t)=\xi(\tau_1)$ for all $t<\tau_1$. The function $\xi$ is then continuous in $\R$. We shall show in this Step~4 that~\eqref{defxit} holds with this function $\xi$. To do so, we first prove in the following two lemmas some key-properties on the local oscillations of the function $\xi$.

\begin{lemma}\label{lem5}
There is a positive constant $\tau_2$ such that $\xi(t+s)>\xi(t)$ for all $t\ge\tau_1$ and $s\ge\tau_2$.
\end{lemma}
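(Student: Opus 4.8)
The plan is to argue by contradiction. If no such $\tau_2$ existed, there would be sequences $t_n\ge\tau_1$ and $s_n\to+\infty$ with $\xi(t_n+s_n)\le\xi(t_n)$. Since $\xi$ is continuous on $\R$ and $\xi(t)\to+\infty$ by~\eqref{xiinfty}, the sequence $(t_n)_n$ cannot be bounded (otherwise $(\xi(t_n))_n$ would be bounded while $\xi(t_n+s_n)\to+\infty$), so, up to extraction, $t_n\to+\infty$.

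The core of the argument is to extract a limiting one-dimensional profile at the level set and to show that this profile forces the level set to move outward. Fix a unit vector $e$, set $p_n=x_0+\xi(t_n)e$, and let $u_n(\sigma,y)=u(t_n+\sigma,p_n+y)$. By the parabolic estimates of Section~\ref{sec41}, along a subsequence $u_n\to U$ in $C^{1,2}_{t,x}$ locally in $\R\times\R^N$, with $U$ a bounded entire solution of~\eqref{eq:ACN}. Because of the radial symmetry and strict radial monotonicity~\eqref{defx0} of $u$ and of $\xi(t_n)\to+\infty$, the dependence of $u_n$ on the transverse variables flattens out, so $U(\sigma,y)=V(\sigma,y\cdot e)$ for a bounded planar entire solution $V$ of $V_\sigma=V_{zz}+f(V)$; moreover $V(0,0)=m$, $V(0,\cdot)$ is nonincreasing, and $V(0,z)\ge m$ for $z\le 0$. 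The crucial — and most delicate — point is that $V$ is \emph{nondegenerate} at its $m$-level, namely $V_z(0,0)\le-\delta<0$, where $\delta$ is the constant of~\eqref{defdelta}. To obtain this one re-examines the dichotomy~\eqref{case1}--\eqref{case2} of Step~2 with the crater $\varphi$ of~\eqref{eqvarphi3} recentred at $x_0+(\xi(t)+R_1)e$: since $u$ is spreading, the alternative~\eqref{case1} for such a recentred crater would (as in Remark~\ref{rem15}) make $u$ uniformly localized, contradicting~\eqref{spreading3}; hence~\eqref{case2} occurs and, exactly as in the derivation of~\eqref{inequmeps}, one gets $\nabla u(t,\cdot)\cdot\frac{x-x_0}{|x-x_0|}\le-\delta$ on $\partial B(x_0,\xi(t))$ along a cofinal family of times $t$, which may be taken to contain the $t_n$. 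Together with the uniform bound~\eqref{defM2}, integrating this gradient estimate yields $V(0,z)\ge m+\delta^2/(4M_2)$ for all $z\le-\delta/(2M_2)$.

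It remains to make the level set of $V$ invade. Using Lemma~\ref{lem2} (with $\alpha=0$, $\beta=M$) one chooses $R>0$ and a radially symmetric, compactly supported (extended by $0$) subsolution $\widehat\psi$ of $\Delta\psi+f(\psi)=0$ whose maximum $\widehat\psi(0)$ lies in $(0,M)$ and is $<m+\delta^2/(4M_2)$; by the previous step $V(0,\cdot)\ge\widehat\psi(\cdot-y_*)$ for $y_*$ negative enough, so, by comparison, $V(\sigma,\cdot)\ge w_1(\sigma,\cdot-y_*)$ for $\sigma\ge0$, where $w_1$ is the increasing solution issued from $\widehat\psi$, which converges to $M$ locally uniformly as $\sigma\to+\infty$. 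Since $V(0,\cdot)$ (hence each $V(\sigma,\cdot)$) is nonincreasing, it follows that there are $\sigma^*>0$ and $\epsilon'\in(0,2(M-m))$, independent of $n$ and of $e$, with $V(\sigma^*,z)\ge m+\epsilon'$ for all $z\le 1$. Translating back and invoking the radial monotonicity of $u$ gives $u(t_n+\sigma^*,\cdot)\ge m+\epsilon'/2$ on $\overline{B(x_0,\xi(t_n)-1)}$ for all $n$ large; then Lemma~\ref{lem4} (with $\epsilon=\epsilon'/2$, $\lambda=m+\epsilon'/2$, $r=1$) provides a time $T'>0$, independent of $n$, such that $u(t,\cdot)\ge m+\epsilon'/2>m$ on $\overline{B(x_0,\xi(t_n))}$ for every $t\ge t_n+\sigma^*+T'$ (for $n$ large, so that $\xi(t_n)-1\ge\rho_{\epsilon'/2,m+\epsilon'/2,1}$). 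Choosing $n$ so large that $s_n\ge\sigma^*+T'$, the definition~\eqref{deftau1} of $\xi$ and the strict radial monotonicity of $u$ force $\xi(t_n+s_n)>\xi(t_n)$, contradicting $\xi(t_n+s_n)\le\xi(t_n)$ and completing the proof.

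The step I expect to be the main obstacle is the nondegeneracy claim $V_z(0,0)\le-\delta$: one must be sure that the one-time gradient estimate underlying~\eqref{inequmeps} can be reproduced at a cofinal family of large times, which calls for a careful handling of the comparison with recentred craters and of the (a priori possible) non-monotonicity of $\xi$. Once this is secured, the remainder is a routine combination of the one-dimensional spreading of $w_1$, radial monotonicity, and the key-lemmas~\ref{lem3}--\ref{lem4}.
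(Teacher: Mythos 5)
Your proof takes a genuinely different route from the paper's, passing to a limiting one-dimensional profile $V$ at the $m$-level set rather than working directly with $u$. The overall plan is sound in spirit, but the step you yourself flag as the "most delicate" — the nondegeneracy claim $V_z(0,0)\le-\delta$ — has a genuine gap. The crater comparison that produces the one-time gradient estimate $\nabla u(\cdot)\cdot\frac{x-x_0}{|x-x_0|}\le-\delta$ at the $m$-level requires that $u(t',\cdot)\le m=\varphi(\cdot-x_3)$ on $\partial B(x_3,R_1)$ throughout the comparison time interval $[\tau_1,t]$; by the radial monotonicity~\eqref{defx0} this reduces to the requirement $\xi(t')\le\xi(t)$ for all $t'\in[\tau_1,t]$. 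Since $\xi$ is not known to be nondecreasing a priori — indeed Lemma~\ref{lem5} is precisely the first step towards controlling its oscillations — the function $\xi$ may exceed $\xi(t_n)$ at some earlier time, and the comparison then breaks before reaching time $t_n$. Your assertion that the cofinal family of times at which the estimate holds "may be taken to contain the $t_n$" is exactly the point that fails: the $t_n$ are given to you by the contradiction hypothesis, not chosen, and they need not be "record-setting" times for $\xi$.

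The paper circumvents this by replacing $t$ (your $t_n$) with $t^*:=\min\{t'\le t:\xi(t')=\xi(t)\}$, which by construction is the first time $\xi$ reaches the value $\xi(t)$, so the crater comparison is valid on $[\tau_1,t^*]$ and the gradient estimate holds at $t^*$. Lemma~\ref{lem4} then carries this information forward, and the inequality $t+s\ge t^*+\tau_2$ holds automatically because $t\ge t^*$ and $s\ge\tau_2$. Your proof can be repaired along these lines — replace $t_n$ by $t^*_n$ in the limiting construction — but once this is done the limit is no longer needed: applying Lemma~\ref{lem4} directly at $t^*_n$ already yields $\xi(t_n+s_n)>\xi(t_n)$, so the planar profile $V$, the sub-solution $\widehat\psi$ from Lemma~\ref{lem2} (whose maximum you moreover cannot force to lie below $m+\delta^2/(4M_2)$ without additional work, since Lemma~\ref{lem2} only gives a lower bound on $\psi(0)$), and the auxiliary solution $w_1$ are all superfluous. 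The paper's direct argument is shorter and avoids both of these issues.
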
  

\begin{proof}
By~\eqref{localized} and~\eqref{expdecay}, there is a real number $R_3>0$ such that
\be\label{defR3}
u(t,x)<\varphi(0)\hbox{ for all $t\le\tau_1$ and $x$ such that $|x-x_0|\ge R_3$}.
\ee
With $\epsilon=\delta^2/(4M_2)>0$, $\lambda=(m+M)/2<M$ and $r=\delta/(2M_2)\ge0$, denote,
using the notations of Lemma~\ref{lem4} and the definition~\eqref{defR2} of $R_2$,
\be\label{defR4}
R_4=\max\Big(\rho_{\epsilon,\lambda,r}+\frac{\delta}{2M_2},R_3,\xi(\tau_1)+1\Big)>0
\ee
and
$$\tau_2=T_{\epsilon,\lambda,r}>0.$$
Let also $\tau_3\in\R$ be such that
$$\xi(t)\ge R_4\ \hbox{ for all $t\ge\tau_3$}$$
(hence, $\tau_3>\tau_1$, since $R_4>\xi(\tau_1)$).

Consider now any $t\ge\tau_3$ and $s\ge\tau_2$ and let us show that $\xi(t+s)>\xi(t)$. Let~$x_3\in\R^N$ be such that
$$|x_3-x_0|=\xi(t)+R_1,$$
where $R_1>0$ is given in~\eqref{eqvarphi3}. Thus, $|x_3-x_0|\ge R_4+R_1\ge R_3+R_1$, hence $u(t',\cdot)<\varphi(0)\le\varphi(\cdot-x_3)$ in $\overline{B(x_3,R_1)}$ for all $t'\le\tau_1$ by~\eqref{defR3}. Observe that $\xi(t')=\xi(\tau_1)<R_4\le\xi(t)$ for all $t'\le\tau_1$, and, by continuity of $\xi$, denote
$$t^*=\min\big\{t'\in(-\infty,t]: \xi(t')=\xi(t)\big\}\in(\tau_1,t].$$
Let $x_4$ be the intersection point of the segment $[x_0,x_3]$ with $\partial B(x_3,R_1)$. One has
$$|x_4-x_0|=|x_3-x_0|-R_1=\xi(t)=\xi(t^*), $$
hence $u(t^*,x_4)=m=\varphi(x_4-x_3)$. Furthermore, $u(t',\cdot)<\varphi(0)\le\varphi(\cdot-x_3)$ in $\overline{B(x_3,R_1)}$ for all $t'\le\tau_1$ by~\eqref{eqvarphi3} and~\eqref{defR3}, while
$$u(t',\cdot)\le m=\varphi(\cdot-x_3)\hbox{ on $\partial B(x_3,R_1)$ for all~$t'\in[\tau_1,t^*]$}$$
by~\eqref{defx0} and the definition of~$t^*$ (and even $u(t',\cdot)<m$ on $\partial B(x_3,R_1)$ for all~$t'\in[\tau_1,t^*)$). It then follows from the maximum principle that
$$u(t',\cdot)\le\varphi(\cdot-x_3)\hbox{ in $\overline{B(x_3,R_1)}$ for all $t'\in[\tau_1,t^*]$}$$
(actually with strict inequality for $t'\in[\tau_1,t^*)$ and even for $t'\in(-\infty,t^*)$). In particular, $u(t^*,\cdot)\le\varphi(\cdot-x_3)$ in $\overline{B(x_3,R_1)}$ and since $x_4\in\partial B(x_3,R_1)$ with $|x_4-x_0|=\xi(t^*)$ and $t^*\ge\tau_1$, one has $u(t^*,x_4)=m=\varphi(x_4-x_3)$. Therefore,
$$-|\nabla u(t^*,x_4)|=\nabla u(t^*,x_4)\cdot\frac{x_4-x_0}{|x_4-x_0|}\le-\delta$$
owing to the definition of $\delta$ in \eqref{deftildevarphi}-\eqref{defdelta}. Hence, as in the proof of~\eqref{inequmeps}, one infers that
\be\label{ut*}
u(t^*,\cdot)\ge m+\frac{\delta^2}{4M_2}=m+\epsilon\hbox{ in }\overline{B\big(x_0,|x_4-x_0|-\delta/(2M_2)\big)},
\ee
with $|x_4-x_0|-\delta/(2M_2)=\xi(t)-\delta/(2M_2)\ge R_4-\delta/(2M_2)\ge\rho_{\epsilon,\lambda,r}$ by~\eqref{defR4}. Lemma~\ref{lem4} then yields
$$u(t',\cdot)\ge\lambda=\frac{m+M}{2}\hbox{ in }\overline{B\big(x_0,|x_4-x_0|-\delta/(2M_2)+r\big)}=\overline{B(x_0,|x_4-x_0|)}=\overline{B(x_0,\xi(t))}$$
for all $t'\ge t^*+T_{\epsilon,\lambda,r}=t^*+\tau_2$. Since $t+s\ge t^*+\tau_2$, one has $u(t+s,\cdot)\ge(m+M)/2>m$ in $\overline{B(x_0,\xi(t))}$ and the definition of $\xi(t+s)$ together with~\eqref{defx0} and $t+s>t^*>\tau_1$ finally yields $\xi(t+s)>\xi(t)$.

As a consequence, $\xi(t+s)>\xi(t)$ for all $t\ge\tau_3$ and $s\ge\tau_2$. Since $\xi$ is continuous in~$\R$ and~$\xi(t)\to+\infty$ as $t\to+\infty$, the conclusion of Lemma~\ref{lem5} follows, even if it means increa\-sing~$\tau_2$ if necessary.
\end{proof}

\begin{lemma}\label{lem6}
For each $\tau>0$, there is a positive constant $A_\tau$ such that $\xi(t+s)\le\xi(t)+A_\tau$ for all $t\in\R$  and $s\in[0,\tau]$.
\end{lemma}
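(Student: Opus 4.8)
The plan is to reduce, via Lemma~\ref{lem5}, the statement to a claim about the first time the interface reaches a given radius, exploit the gradient lower bound $|\nabla u|\ge\delta$ that is available at such times (it is proved in the proof of Lemma~\ref{lem5}), and then run a comparison with an explicit supersolution on the exterior domain bounded by the interface sphere. First I would dispose of the small values of $t$: since $\xi$ is continuous on $\R$ and constant on $(-\infty,\tau_1]$, the bound $\xi(t+s)\le\xi(t)+A_\tau$ for $s\in[0,\tau]$ follows for $t$ in any given compact set from the boundedness of $\xi$ there (and $\xi\ge\xi(\tau_1)>0$), so it suffices to treat the $t$ for which $\xi(t)\ge R_4$. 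Fix such a $t$ and set $t^*:=\min\{t'\in\R:\xi(t')=\xi(t)\}$. As for the quantity $t^*$ in the proof of Lemma~\ref{lem5} one gets $t^*\in(\tau_1,t]$, and if $t-t^*\ge\tau_2$ then Lemma~\ref{lem5} applied at time $t^*$ with $s=t-t^*$ would give $\xi(t)=\xi\big(t^*+(t-t^*)\big)>\xi(t^*)=\xi(t)$, a contradiction; hence $t^*>t-\tau_2$. Running the $\varphi(\cdot-x_3)$--comparison of the proof of Lemma~\ref{lem5} (legitimate because $\xi(t^*)=\xi(t)\ge R_4$) yields $|\nabla u(t^*,\cdot)|\ge\delta$ on $\partial B(x_0,\xi(t))$; combining this with the uniform bound~\eqref{defM2} on $D^2u$ and the radial monotonicity~\eqref{defx0}, exactly as in the derivation of~\eqref{inequmeps}--\eqref{ut*} but now going outward, gives
\[
u(t^*,x)\le m-\epsilon_1\ \hbox{ for all }x\hbox{ with }|x-x_0|\ge\xi(t)+R_0,
\]
where $R_0:=\delta/(2M_2)$ and $\epsilon_1:=\delta^2/(4M_2)>0$.

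The core step is a comparison on the exterior domain $\Omega:=\{|x-x_0|>\xi(t)+R_0\}$ over the time interval $[t^*,t^*+\tau']$ with $\tau':=\tau+\tau_2$. Let $\tilde f$ be a globally Lipschitz extension of $f|_{[0,M]}$ (constant outside $[0,M]$), with Lipschitz constant $\tilde\Lambda$, let $\omega$ solve $\omega'=f(\omega)$ with $\omega(0)=m-\epsilon_1$, and let $\Psi$ solve the linear problem $\Psi_{t'}=\Psi_{\rho\rho}+\tilde\Lambda\Psi$ on $(0,+\infty)^2$ with $\Psi(\cdot,0)\equiv1$ and $\Psi(0,\cdot)\equiv0$; then $\Psi\ge0$, $\Psi_\rho\le0$, and reducing to the heat equation and bounding the complementary error function by a Gaussian gives $\Psi(T,\rho)\le e^{\tilde\Lambda\tau'}e^{-\rho^2/(4\tau')}$ for $T\in[0,\tau']$. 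I claim that
\[
\overline u(t',x):=\omega(t'-t^*)+M\,\Psi\big(t'-t^*,\,|x-x_0|-\xi(t)-R_0\big)
\]
is a supersolution of $w_t=\Delta w+\tilde f(w)$ on $\Omega\times(t^*,t^*+\tau']$: this uses $\omega'=f(\omega)$, the Lipschitz inequality $\tilde f(\omega+M\Psi)\le f(\omega)+\tilde\Lambda M\Psi$, and the fact that the radial (curvature) term $-\frac{N-1}{|x-x_0|}\Psi_\rho$ is nonnegative. On the parabolic boundary, $\overline u=\omega(t'-t^*)+M\ge M>u$ on $\partial\Omega$, $\overline u(t^*,\cdot)=m-\epsilon_1\ge u(t^*,\cdot)$ on $\Omega$, and $\overline u\ge\omega>0$ while $u(t',\cdot)\to0$ at infinity for each fixed $t'$. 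Since $0<u<M$ by~\eqref{0uM}, $u$ solves $u_t=\Delta u+\tilde f(u)$, so the parabolic maximum principle gives $u\le\overline u$ on $\Omega\times[t^*,t^*+\tau']$.

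Finally, because $f(m)=0$ the ODE solution $\omega$ cannot reach $m$ in finite time, so $m':=\max_{[0,\tau']}\omega<m$; inserting the Gaussian bound on $\Psi$ shows $u(t',x)<m$ whenever $|x-x_0|>\xi(t)+A$ and $t'\in[t^*,t^*+\tau']$, with $A:=R_0+2\sqrt{\tau'\log\!\big(Me^{\tilde\Lambda\tau'}/(m-m')\big)}$. By the radial monotonicity~\eqref{defx0} this means $\xi(t')\le\xi(t)+A$ for every such $t'$, and since $t^*\le t\le t+s\le t+\tau\le t^*+\tau_2+\tau=t^*+\tau'$ for $s\in[0,\tau]$, we conclude $\xi(t+s)\le\xi(t)+A$; taking $A_\tau$ to be $A$ enlarged so as to also cover the remaining range of small $t$ finishes the proof. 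The main obstacle is the supersolution construction: a priori $u$ is only known to be $\le m-\epsilon_1$ ahead of the interface (it need not decay there), and one must nevertheless keep the comparison function strictly below $m$ throughout the whole window $[t^*,t^*+\tau']$ — this is precisely where the impossibility for $\omega$ to approach the equilibrium $m$ in finite time, and the choice of the linear parabolic equation governing the diffusive correction $M\Psi$, are used. The conceptual key is the reduction to the first‑arrival time $t^*$ (via Lemma~\ref{lem5}), which is what makes the pointwise estimate ``$u\le m-\epsilon_1$ ahead of the interface'' available.
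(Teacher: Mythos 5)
Your proposal shares its first half with the paper's proof: both reduce via Lemma~\ref{lem5} to the first-arrival time $t^*$ (with $t-t^*<\tau_2$), use the $\varphi(\cdot-x_3)$-comparison to get a gradient lower bound at the interface sphere, and deduce a pointwise smallness of $u(t^*,\cdot)$ ahead of the interface. From there, however, the two arguments diverge. The paper argues by contradiction and compactness: from a hypothetical sequence $(t_n,s_n)$ with $\xi(t_n+s_n)>\xi(t_n)+n$, it translates to $z_n$, uses that the bound $u(t^*_n,\cdot)\le\varphi(0)<\eta$ holds on balls of radius growing like~$n$ around $z_n$, and passes to a limit $u_\infty$; the ODE comparison $\zeta'=f(\zeta)$, $\zeta(0)=\varphi(0)$ (using $f<0$ on $(0,\eta]$) then forbids $u_\infty$ from reaching the value $m>\eta$ a bounded time later. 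You instead build an explicit supersolution on the exterior domain $\Omega$ over $[t^*,t^*+\tau']$, combining the ODE part $\omega$ (which cannot reach the equilibrium $m$ in finite time) with a diffusive correction $M\Psi$ controlling the influx from $\partial\Omega$; a Gaussian bound on $\Psi$ then produces an explicit $A_\tau$. Your route avoids compactness and yields a quantitative constant; the paper's is shorter. (Incidentally, the paper's stronger pointwise bound $u(t^*,\cdot)\le\varphi(0)<\eta$ ahead of the interface, rather than your $u(t^*,\cdot)\le m-\epsilon_1$, would work equally well as the initial data for your supersolution and would save you the gradient-based derivation of Step~4, though your version is also correct.)

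There is one technical flaw in the supersolution construction. You define $\Psi$ by $\Psi_T=\Psi_{\rho\rho}+\tilde\Lambda\Psi$ with $\Psi(T,0)=1$, $\Psi(0,\rho)=0$, and assert $\Psi_\rho\le 0$. This is not justified: because of the zeroth-order term $+\tilde\Lambda\Psi$, the solution is not bounded above by its lateral boundary value; indeed $\Psi\le e^{\tilde\Lambda T}$ is the natural a~priori bound and one should expect $\Psi$ to exceed $1$ in the interior once $\tilde\Lambda T$ is of order one (there is no bounded nonnegative steady supersolution with value $1$ at $\rho=0$ for $-\partial_{\rho\rho}-\tilde\Lambda$). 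Once $\Psi(T,h)>1$ for some $T,h$, the shifted difference $\Psi(T,\rho)-\Psi(T,\rho+h)$ has negative lateral boundary data and the comparison argument for $\Psi_\rho\le0$ breaks, and with it the sign of the curvature term $-\frac{N-1}{|x-x_0|}\Psi_\rho$ on which your supersolution inequality rests. The fix is easy and preserves the rest of your argument verbatim: take $\Psi(T,\rho)=e^{\tilde\Lambda T}\,\mathrm{erfc}\big(\rho/(2\sqrt T)\big)$ (extended by $0$ at $T=0$). This solves $\Psi_T=\Psi_{\rho\rho}+\tilde\Lambda\Psi$, has $\Psi(0,\cdot)=0$, $\Psi(T,0)=e^{\tilde\Lambda T}\ge1$ (which is all the lateral boundary comparison needs, since then $\overline u\ge M$ on $\partial\Omega$), $\Psi_\rho=-\big(e^{\tilde\Lambda T}/\sqrt{\pi T}\big)e^{-\rho^2/(4T)}\le0$ by direct computation, and the same Gaussian bound $\Psi(T,\rho)\le e^{\tilde\Lambda T}e^{-\rho^2/(4T)}\le e^{\tilde\Lambda\tau'}e^{-\rho^2/(4\tau')}$ for $T\in(0,\tau']$. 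With this modification your argument goes through.
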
  

\begin{proof}
Assume that the conclusion does not hold. Then there are $\tau>0$ and some sequences~$(t_n)_{n\in\N}$ in $\R$ and $(s_n)_{n\in\N}$ in $[0,\tau]$ such that $\xi(t_n+s_n)>\xi(t_n)+n$. Since $\xi$ is conti\-nuous in $\R$ and constant in $(-\infty,\tau_1]$, it follows that $t_n\to+\infty$ as $n\to+\infty$, hence~$\xi(t_n)\to+\infty$ as~$n\to+\infty$ by~\eqref{xiinfty}. Without loss of generality, one can assume that, for every $n\in\N$,
$$t_n\ge\tau_1+\tau_2\ \hbox{ and }\ \xi(t_n)\ge\max(R_3,\xi(\tau_1)+1),$$
where $\tau_1\in\R$ and $\tau_2>0$ are given in~\eqref{deftau1} and in Lemma~\ref{lem5}, and $R_3>0$ is given in~\eqref{defR3}. 

Now, for every $n\in\N$, Lemma~\ref{lem5} yields the existence of $t^*_n\in(t_n-\tau_2,t_n]\,(\subset(\tau_1,t_n])$ such that $\xi(t^*_n)=\xi(t_n)$ and $\xi(t)<\xi(t^*_n)=\xi(t_n)$ for all $t<t^*_n$. Let $y_n\in\R^N$ be such that
$$|y_n-x_0|=\xi(t^*_n)+R_1=\xi(t_n)+R_1\,(\ge R_3+R_1).$$
Since $u(t,\cdot)<\varphi(0)\le\varphi(\cdot-y_n)$ in $\overline{B(y_n,R_1)}$ for all $t\le\tau_1$ by~\eqref{defR3}, and since $u(t,\cdot)<m=\varphi(\cdot-y_n)$ on $\partial B(y_n,R_1)$ for all $t\in[\tau_1,t^*_n)$ by~\eqref{defx0} and definition of $t^*_n$, the maximum principle implies that
\be\label{t*nyn}
u(t^*_n,\cdot)\le\varphi(\cdot-y_n)\hbox{ in $\overline{B(y_n,R_1)}$}.
\ee
In particular, $u(t^*_n,y_n)\le\varphi(0)$ and
\be\label{t*n}
u(t^*_n,x)\le\varphi(0)\hbox{ for all $x$ such that $|x-x_0|\ge|y_n-x_0|=\xi(t_n)+R_1$},
\ee
by~\eqref{defx0}. 

On the other hand, for every $n\in\N$, one has $t_n+s_n\ge t_n\ge\tau_1+\tau_2>\tau_1$, and there is a point $z_n$ such that $|z_n-x_0|=\xi(t_n+s_n)$, hence $u(t_n+s_n,z_n)=m$. Notice also that~$t_n-t^*_n+s_n\in[0,\tau_2+\tau)$ for each $n\in\N$. Up to extraction of a subsequence, one can assume without loss of generality that~$t_n-t^*_n+s_n\to s_\infty\in[0,\tau_2+\tau]$ as $n\to+\infty$ and that the functions
$$u_n:(t,x)\mapsto u_n(t,x)=u(t+t^*_n,x+z_n)$$
converge in $C^{1,2}_{t,x}$ locally in $\R\times\R^N$ to a bounded nonnegative solution $u_\infty$ of~\eqref{eq:ACN} such that $u_\infty(s_\infty,0)=m$. Furthermore, for each $x\in\R^N$, there holds
$$|x+z_n-x_0|\ge|z_n-x_0|-|x|=\xi(t_n+s_n)-|x|>\xi(t_n)+n-|x|,$$
hence $|x+z_n-x_0|\ge\xi(t_n)+R_1$ for all $n$ large enough and $u_n(0,x)=u(t^*_n,x+z_n)\le\varphi(0)$ by~\eqref{t*n}. As a consequence, $u_\infty(0,x)\le\varphi(0)$ for all $x\in\R^N$. Since $0<\varphi(0)<\eta$ by~\eqref{eqvarphi3} and $f<0$ in $(0,\eta]$ by~\eqref{defeta}, it follows from the maximum principle that $u_\infty\le\varphi(0)<\eta$ in~$[0,+\infty)\times\R^N$. In particular, $u_\infty(s_\infty,0)<\eta$, which is impossible since $u_\infty(s_\infty,0)=m$ and~$m>\eta$ (remember that $f(m)=0$ and $m>0$). One has then reached a contradiction, and the proof of Lemma~\ref{lem6} is thereby complete.
\end{proof}

With Lemmas~\ref{lem5} and~\ref{lem6} in hand, we can now complete the proof of~\eqref{defxit}. Let us begin with the first statement in~\eqref{defxit}. Assume by way of contradiction that it does not hold. Then, thanks to~\eqref{0uM}, there are $M'\in(0,M)$ and some sequences $(t_n)_{n\in\N}$ converging to $+\infty$ and~$(x_n)_{n\in\N}$ in $\R^N$ such that
\be\label{defM'}
0<u(t_n,x_n)\le M'<M\hbox{ for all }n\in\N,\ \hbox{ and }|x_n|-\xi(t_n)\to-\infty\hbox{ as }n\to+\infty.
\ee

Let $\tau>0$ be an arbitrary positive real number, and let $R_1>0$ be given as in~\eqref{eqvarphi3}. Consider in this paragraph the indices $n$ large enough so that $t_n-\tau\ge\tau_1+\tau_2$ for every~$n\in\N$, where $\tau_1\in\R$ and $\tau_2>0$ are given in~\eqref{deftau1} and Lemma~\ref{lem5}, and
$$\xi(t_n-\tau)\ge\max\big(R_3,\xi(\tau_1)+1,\frac{\delta}{2M_2}+1\big),$$
where $R_3>0$ is given in~\eqref{defR3}, $\delta>0$ in~\eqref{deftildevarphi}-\eqref{defdelta} and $M_2>0$ in~\eqref{defM2}. Notice that the quantities $\tau$, $\tau_1$, $\tau_2$, $R_1$, $R_3$, $\delta$ and $M_2$ are independent of $n$. Now, for each $n$ large enough, Lemma~\ref{lem5} yields the existence of $t^*_n\in(t_n-\tau-\tau_2,t_n-\tau]\,(\subset(\tau_1,t_n-\tau])$ such that~$\xi(t^*_n)=\xi(t_n-\tau)$ and $\xi(t)<\xi(t^*_n)=\xi(t_n-\tau)$ for all $t<t^*_n$. Let $y_n\in\R^N$ be such that
$$|y_n-x_0|=\xi(t^*_n)+R_1=\xi(t_n-\tau)+R_1\,(\ge R_3+R_1)$$
and $z_n$ be the intersection point of $[x_0,y_n]$ with $\partial B(y_n,R_1)$ such that
$$|z_n-x_0|=|y_n-x_0|-R_1=\xi(t_n-\tau)=\xi(t^*_n),\quad u(t^*_n,z_n)=m$$
by~\eqref{deftau1}. As in the proof of~\eqref{t*nyn}, there holds $u(t^*_n,\cdot)\le\varphi(\cdot-y_n)$ in~$\overline{B(y_n,R_1)}$ with $u(t^*_n,z_n)=m=\varphi(z_n-y_n)$. Therefore,
$$-|\nabla u(t^*_n,z_n)|=\nabla u(t^*_n,z_n)\cdot\frac{z_n-x_0}{|z_n-x_0|}\le-\delta,$$
with $\delta>0$ given by~\eqref{deftildevarphi}-\eqref{defdelta}. Hence, as in the proof of~\eqref{ut*}, one infers that
\be\label{ut*bis}
u(t^*_n,\cdot)\ge m+\frac{\delta^2}{4M_2}\ \hbox{ in }\overline{B\big(x_0,|z_n-x_0|-\delta/(2M_2)\big)}=\overline{B\big(x_0,\xi(t_n-\tau)-\delta/(2M_2)\big)},
\ee
with $|z_n-x_0|-\delta/(2M_2)=\xi(t_n-\tau)-\delta/(2M_2)>0$. Together with~\eqref{0uM}, this implies in particular that $m<m+\delta^2/(4M_2)<M$. Notice also that $\tau\le t_n-t^*_n<\tau+\tau_2$ for each~$n$ (large enough). Up to extraction of a subsequence, one has $t_n-t^*_n\to s_\infty\in[\tau,\tau+\tau_2]$ as~$n\to+\infty$ and the functions
$$u_n:(t,x)\mapsto u_n(t,x)=u(t+t^*_n,x+x_n)$$
converge in $C^{1,2}_{t,x}$ locally in $\R\times\R^N$ to a bounded nonnegative solution $u_\infty$ of~\eqref{eq:ACN} such that
$$u_\infty(s_\infty,0)\le M'<M$$
by~\eqref{defM'}. Furthermore, for each $x\in\R^N$, one has
$$|x+x_n-x_0|\le\xi(t_n)+|x_n|-\xi(t_n)+|x-x_0|\le\xi(t_n-\tau)+A_\tau+|x_n|-\xi(t_n)+|x-x_0|$$
from Lemma~\ref{lem6}, where $A_\tau>0$ is given in Lemma~\ref{lem6}, hence $|x+x_n-x_0|\le\xi(t_n-\tau)-\delta/(2M_2)$ for all $n$ large enough, from the second statement of~\eqref{defM'}. As a consequence, $u_n(0,x)=u(t^*_n,x+x_n)\ge m+\delta^2/(4M_2)$ for all $n$ large enough, by~\eqref{ut*bis}. Thus, $u_\infty(0,\cdot)\ge m+\delta^2/(4M_2)$ in $\R^N$ and $u_\infty(t,\cdot)\ge\varpi(t)$ in $\R^N$ for all $t\ge0$, where $\varpi$ obeys
$$\left\{\baa{ll}
\varpi'(t)=f(\varpi(t)) & \hbox{ for $t\ge0$},\vspace{3pt}\\
\displaystyle\varpi(0)=m+\frac{\delta^2}{4M_2}>m. & \eaa\right.$$
Since $m<m+\delta^2/(4M_2)<M$ and $f>0$ in $(m,M)$ with $f(M)=0$, the function $\varpi$ is increasing in $[0,+\infty)$ and $\varpi(+\infty)=M$. The inequality $u_\infty(t,\cdot)\ge\varpi(t)$ applied at $t=s_\infty\ge\tau>0$ and $x=0$ yields $u_\infty(s_\infty,0)\ge\varpi(s_\infty)\ge\varpi(\tau)$, hence $M>M'\ge u_\infty(s_\infty,0)\ge\varpi(\tau)$. Since~$M'$ is given in~\eqref{defM'} independently of $\tau>0$ and $\tau>0$ can be arbitrarily large, one infers that $M>M'\ge\varpi(+\infty)=M$, a contradiction. As a consequence, the first line in~\eqref{defxit} has been proved.

Let us now show the second statement in~\eqref{defxit}. Assume by way of contradiction that it does not hold. Then, thanks to~\eqref{0uM}, there are $\kappa\in(0,M)$ and some sequences $(t_n)_{n\in\N}$ converging to $+\infty$ and~$(x_n)_{n\in\N}$ in $\R^N$ such that
\be\label{defkappa}
0<\kappa\le u(t_n,x_n)<M\hbox{ for all }n\in\N,\ \hbox{ and }|x_n|-\xi(t_n)\to+\infty\hbox{ as }n\to+\infty.
\ee
Let $\sigma>0$ be an arbitrary positive real number such that
$$\sigma\ge\tau_2,$$
where $\tau_2>0$ is given in Lemma~\ref{lem5}, and let $R_1>0$ be given as in~\eqref{eqvarphi3}. Consider in this paragraph the indices $n$ large enough so that $t_n-\sigma\ge\tau_1+\tau_2$ for every~$n\in\N$, where $\tau_1\in\R$ is given in~\eqref{deftau1}, and
$$\xi(t_n-\sigma)\ge\max(R_3,\xi(\tau_1)+1),$$
where $R_3>0$ is given in~\eqref{defR3}. For each $n$ large enough, Lemma~\ref{lem5} yields the existence of~$t^*_n\in(t_n-\sigma-\tau_2,t_n-\sigma]\,(\subset(\tau_1,t_n-\sigma])$ such that~$\xi(t^*_n)=\xi(t_n-\sigma)$ and $\xi(t)<\xi(t^*_n)=\xi(t_n-\sigma)$ for all $t<t^*_n$. Let $y_n\in\R^N$ satisfy
$$|y_n-x_0|=\xi(t^*_n)+R_1=\xi(t_n-\sigma)+R_1\,(\ge R_3+R_1).$$
As for~\eqref{t*nyn}, one then has $u(t^*_n,\cdot)\le\varphi(\cdot-y_n)$ in~$\overline{B(y_n,R_1)}$. In particular,~$u(t^*_n,y_n)\le\varphi(0)$ and, from~\eqref{defx0},
\be\label{t*nbis}
u(t^*_n,x)\le\varphi(0)\ \hbox{ for all $x$ such that $|x-x_0|\ge|y_n-x_0|=\xi(t_n-\sigma)+R_1$}.
\ee
Notice also that $\sigma\le t_n-t^*_n<\sigma+\tau_2$ for each~$n$ (large enough). Up to extraction of a subsequence, one has $t_n-t^*_n\to t_\infty\in[\sigma,\sigma+\tau_2]$ as~$n\to+\infty$ and the functions
$$u_n:(t,x)\mapsto u_n(t,x)=u(t+t^*_n,x+x_n)$$
converge in $C^{1,2}_{t,x}$ locally in $\R\times\R^N$ to a bounded nonnegative solution $u_\infty$ of~\eqref{eq:ACN} such that
$$0<\kappa\le u_\infty(t_\infty,0)$$
by~\eqref{defkappa}. Furthermore, for each $x\in\R^N$, one has
$$|x+x_n-x_0|\ge|x_n|-\xi(t_n)+\xi(t_n)-|x-x_0|>|x_n|-\xi(t_n)+\xi(t_n-\sigma)-|x-x_0|$$
from Lemma~\ref{lem5}, since $t_n-\sigma\ge\tau_1$ and $\sigma\ge\tau_2$. Hence $|x+x_n-x_0|\ge\xi(t_n-\sigma)+R_1$ for all $n$ large enough, from~\eqref{defkappa}. As a consequence, $u_n(0,x)=u(t^*_n,x+x_n)\le\varphi(0)$ for all $n$ large enough, by~\eqref{t*nbis}. Thus, $u_\infty(0,\cdot)\le\varphi(0)$ in $\R^N$ and $u_\infty(t,\cdot)\le\vartheta(t)$ in $\R^N$ for all $t\ge0$, where $\vartheta$ obeys
$$\left\{\baa{ll}
\vartheta'(t)=f(\vartheta(t)) & \hbox{ for $t\ge0$},\vspace{3pt}\\
\displaystyle\vartheta(0)=\varphi(0)\in(0,\eta). & \eaa\right.$$
Since $f<0$ in $(0,\eta)$ with $f(0)=0$, the function $\vartheta$ is decreasing in $[0,+\infty)$ and $\vartheta(+\infty)=0$. The inequality $u_\infty(t,\cdot)\le\vartheta(t)$ applied at $t=t_\infty\ge\sigma>0$ and $x=0$ yields $u_\infty(t_\infty,0)\le\vartheta(t_\infty)\le\vartheta(\sigma)$, hence $0<\kappa\le\vartheta(\sigma)$. Since~$\kappa$ is given in~\eqref{defkappa} independently of $\sigma$ and since~$\sigma\ge\tau_2$ can be arbitrarily large, one infers that $0<\kappa\le\vartheta(+\infty)=0$, a contradiction. As a conclusion, the proof of~\eqref{defxit} is thereby complete.

\begin{remark}
The quantities~$\xi(t)$ given in~\eqref{deftau1} (for $t\ge\tau_1$) are the radial positions (with respect to the point $x_0$) of the level sets with level $m$. Now, for any level $\lambda$ in $(0,M)$, there is a unique real number $\xi_\lambda(t)>0$ such that $u(t,x)=\lambda$ if and only if $|x-x_0|=\xi_\lambda(t)$, for all~$t$ large enough. One then infers from~\eqref{defxit} that
$$\limsup_{t\to+\infty}|\xi_\lambda(t)-\xi(t)|<+\infty.$$
Furthermore, it also follows from~\eqref{defx0} that, for any unit vector $e$ and any sequence $(t_n)_{n\in\N}$ converging to $+\infty$, the functions
$$u_n:(t,x)\mapsto u_n(t,x)=u(t+t_n,x+\xi_\lambda(t_n)e)$$
converge in $C^{1,2}_{t,x}$ locally in $\R\times\R^N$, up to extraction of a subsequence, to a bounded nonnegative solution $u_\infty$ of~\eqref{eq:ACN} which only depends on $t$ and the variable $x\cdot e$ and is nonincreasing in the direction $e$. Moreover,~\eqref{defxit} implies that $u_\infty(0,x)\to M$ as $x\cdot e\to-\infty$ and $u_\infty(0,x)\to 0$ as $x\cdot e\to+\infty$. In particular, the nonpositive function $e\cdot\nabla u_\infty$ can not be identically~$0$ in~$(-\infty,0]\times\R^N$ and the strong parabolic maximum principle then yields $e\cdot\nabla u_\infty(0,0)<0$. From the arbitrariness of the sequence $(t_n)_{n\in\N}$ converging to $+\infty$ and from~\eqref{defx0}, we conclude that
$$\liminf_{t\to+\infty,\,u(t,x)=\lambda}|\nabla u(t,x)|>0,\hbox{ that is, }\limsup_{t\to+\infty,\,u(t,x)=\lambda}\nabla u(t,x)\cdot\frac{x-x_0}{|x-x_0|}<0,$$
for any level $\lambda\in(0,M)$. In other words, the radial derivatives of the function $u$ do not degenerate at large times along any level set of $u$.
\end{remark}

\noindent{\it{Step 5: asymptotic position of the level sets if $u$ spreads, proof of~\eqref{spreading}.}} We still assume that~\eqref{case2} holds, hence $M<+\infty$ and~\eqref{defxit} holds, together with~\eqref{spreading3}. We shall show here that $\xi(t)/t$ has a well determined limit as $t\to+\infty$. Such a property has been well known since the seminal paper~\cite{AW}, under some additional assumptions on the function~$f$. The proof given in~\cite{AW} was based on some comparison arguments and on the existence of approximated fronts defined in bounded intervals or in half-lines. The proof of~\eqref{spreading} given here is still based on comparison arguments with suitable sub- and supersolutions, but the approximated fronts moving at speeds arbitrarily close to $c$ that are here used are defined in the whole real line and are given in Lemma~\ref{lem1}.

First of all, as already emphasized, one has $0<m<M$, $f(0)=f(M)=0$, $f'(0)<0$, $f>0$ in $(m,M)$, $F<0$ in $(0,m]$ and $F(M)>0$. Lemma~\ref{lem1} then yields the existence and uniqueness of a pair $(c,\varphi)$ solving~\eqref{eqvarphi}, with $c>0$.

Consider now any $c'\in(0,c)$, and let us show that $\liminf_{t\to+\infty}\xi(t)/t\ge c'$. From Remark~\ref{rem32}, there is a sequence $(\epsilon_n)_{n\in\N}$ in $(0,m)$ converging to $0$ such that, for each $n\in\N$, there is a $C^1([-\epsilon_n,M-\epsilon_n])$ function $\underline{f}_n$ such that $\underline{f}_n(-\epsilon_n)=\underline{f}_n(M-\epsilon_n)=0$, $\underline{f}_n\le f$ in~$[0,M-\epsilon_n]$, and there is a unique pair $(c_n,\varphi_n)\in\R\times C^2(\R)$ solving
\be\label{varphinbis}
\varphi_n''+c_n\varphi_n'+\underline{f}_n(\varphi_n)=0\hbox{ in }\R,\ \varphi_n'<0\hbox{ in }\R,\ \varphi_n(-\infty)=M-\epsilon_n,\ \varphi_n(+\infty)=-\epsilon_n.
\ee
Furthermore, $c_n<c$ and $c_n\to c$ as $n\to+\infty$. Fix $\epsilon>0$ arbitrary, and then $n$ large enough such that
$$0<\epsilon_n\le\epsilon\hbox{ and }c'<c_n<c.$$
Let then $\rho>0$ large enough such that
$$\frac{N-1}{\rho}<\frac{c_n-c'}{2}$$
and denote $c'_n=(c'+c_n)/2\in(c',c_n)\subset(c',c)$. Since $u(t,\cdot)\to M$ as $t\to+\infty$ locally uniformly in $\R^N$ by~\eqref{spreading3}, there is a time $T>0$ such that
\be\label{uMepsn}
u(t,x)\ge M-\epsilon_n\ \hbox{ for all }t\ge T\hbox{ and }|x-x_0|\le\rho.
\ee
Let then $A>0$ be such that $\varphi_n(r-c'_nT+A)<0$ for all $r\ge\rho$ (that is possible since $\varphi_n(+\infty)=-\epsilon_n<0$). Let us finally define
$$\underline{u}(t,x):=\max\big(\varphi_n(|x-x_0|-c'_nt+A),0\big)$$
and show that this function is a generalized subsolution of~\eqref{eq:ACN} for $t\ge T$ and $|x-x_0|\ge\rho$. First of all, at time $t=T$, for all $|x-x_0|\ge\rho$, one has $\varphi_n(|x-x_0|-c'_nT+A)<0$, hence $\underline{u}(T,x)=0<u(T,x)$. Furthermore, for all $t\ge T$ and $|x-x_0|=\rho$, one has $\underline{u}(t,x)<M-\epsilon_n\le u(t,x)$. Since $f(0)=0$, it just remains to show that, for any $(t,x)$ such that $t>T$ and $|x-x_0|>\rho$ with $\underline{u}(t,x)>0$, then $\underline{u}_t(t,x)\le\Delta\underline{u}(t,x)+f(\underline{u}(t,x))$. Pick any such $(t,x)$ and notice that $\underline{u}(t,x)=\varphi_n(|x-x_0|-c'_nt+A)\in(0,M-\epsilon_n)$ in a neighborhood of $(t,x)$. Hence, having~\eqref{varphinbis} in mind, it follows that
$$\baa{l}
\underline{u}_t(t,x)-\Delta\underline{u}(t,x)-f(\underline{u}(t,x))\vspace{3pt}\\
\qquad\qquad\qquad\displaystyle=-c'_n\varphi_n'(|x-x_0|-c'_nt+A)-\varphi_n''(|x-x_0|-c'_nt+A)\vspace{3pt}\\
\qquad\qquad\qquad\displaystyle\ \ -\frac{N-1}{|x-x_0|}\varphi_n'(|x-x_0|-c'_nt+A)-f(\varphi_n(|x-x_0|-c'_nt+A))\vspace{3pt}\\
\qquad\qquad\qquad\displaystyle\le\Big(c_n-c'_n-\frac{N-1}{|x-x_0|}\Big)\varphi_n'(|x-x_0|-c'_nt+A)\vspace{3pt}\\
\qquad\qquad\qquad<0\eaa$$
since $\underline{f}_n\le f$ in $[0,M-\epsilon_n]$, $(N-1)/|x-x_0|\le(N-1)/\rho<(c_n-c')/2=c_n-c'_n$, and $\varphi'_n<0$ in $\R$. The maximum principle then implies that
$$u(t,x)\ge\underline{u}(t,x)\ge\varphi_n(|x-x_0|-c'_nt+A)\ \hbox{ for all }t\ge T\hbox{ and }|x-x_0|\ge\rho.$$
Therefore, together with~\eqref{uMepsn}, one gets that, for all $t\ge T$,
$$\min_{|x-x_0|\le c't}u(t,x)\ge\varphi_n(c't-c'_nt+A)\to M-\epsilon_n\hbox{ as $t\to+\infty$},$$
since $c'<c'_n$ and~$\varphi_n(-\infty)=M-\epsilon_n$. Together with the inequality $0<u<M$ in $\R\times\R^N$, one infers that~$\limsup_{t\to+\infty}\max_{|x-x_0|\le c't}|u(t,x)-M|\le\epsilon_n\le\epsilon$. Since $\epsilon>0$ and $c'\in(0,c)$ were arbitrary, one concludes from the definition~\eqref{deftau1} of $\xi(t)$ that
\be\label{liminf}
\liminf_{t\to+\infty}\frac{\xi(t)}{t}\ge c.
\ee

For the converse inequality, consider any $c''\!>\!c$ and let us show that $\limsup_{t\to+\infty}\xi(t)/t\!\le\! c''$. From the proof of Lemma~\ref{lem1}, there is a sequence $(\eta_k)_{k\in\N}$ in $(0,m)$ converging to $0$ such that, for each $k\in\N$, there is a $C^1([\eta_k,M+\eta_k])$ function $\overline{f}_k$ such that $\overline{f}_k(\eta_k)=\overline{f}_k(M+\eta_k)=0$, $\overline{f}_k\ge f$ in $[\eta_k,M]$, and there is a unique pair $(\gamma_k,\phi_k)\in\R\times C^2(\R)$ solving
\be\label{phik}
\phi_k''+\gamma_k\phi_k'+\overline{f}_k(\phi_k)=0\hbox{ in }\R,\ \phi_k'<0\hbox{ in }\R,\ \phi_k(-\infty)=M+\eta_k,\ \phi_k(+\infty)=\eta_k.
\ee
Furthermore, $\gamma_k>c$ and $\gamma_k\to c$ as $k\to+\infty$. Fix $\epsilon>0$ arbitrary, and then $k$ large enough such that
$$0<\eta_k\le\epsilon\hbox{ and }c<\gamma_k<c''.$$
Since $0<u<M$ in $\R\times\R^N$ and $u(0,x)\to0$ as $|x|\to+\infty$ by~\eqref{localized}, and since $\phi_k(-\infty)=M+\eta_k>M$ and $\phi_k>\phi_k(+\infty)=\eta_k>0$, there is $A'>0$ such that $\phi_k(1-A')>M$ and~$u(0,x)<\phi_k(|x-x_0|-A')$ for all $x\in\R^N$. Let us finally define
$$\overline{u}(t,x):=\min\big(\phi_k(|x-x_0|-\gamma_kt-A'),M\big)$$
and show that this function is a generalized supersolution of~\eqref{eq:ACN} for $t\ge 0$ and $|x-x_0|\ge1$. First of all, at time $t=0$, one has $u(0,x)<\overline{u}(0,x)$ for all $|x-x_0|\ge1$ (and even for all~$x\in\R^N$, by construction). Furthermore, for all~$t\ge0$ and $|x-x_0|=1$, one has $\gamma_kt\ge0$ and $\phi_k(|x-x_0|-\gamma_kt-A')\ge\phi_k(1-A')>M$, hence $\overline{u}(t,x)=M>u(t,x)$. Since $f(M)=0$, it just remains to show that, for any $(t,x)$ such that $t>0$ and $|x-x_0|>1$ with $\overline{u}(t,x)<M$, then $\overline{u}_t(t,x)\ge\Delta\overline{u}(t,x)+f(\overline{u}(t,x))$. Pick any such $(t,x)$ and notice that $\overline{u}(t,x)=\phi_k(|x-x_0|-\gamma_kt-A')\in(\eta_k,M)$ in a neighborhood of $(t,x)$. Hence, having~\eqref{phik} in mind, it follows that
$$\baa{l}
\overline{u}_t(t,x)-\Delta\overline{u}(t,x)-f(\overline{u}(t,x))\vspace{3pt}\\
\qquad\qquad\qquad\displaystyle=-\gamma_k\phi_k'(|x-x_0|-\gamma_kt-A')-\phi_k''(|x-x_0|-\gamma_kt-A')\vspace{3pt}\\
\qquad\qquad\qquad\displaystyle\ \ -\frac{N-1}{|x-x_0|}\phi_k'(|x-x_0|-\gamma_kt-A')-f(\phi_k(|x-x_0|-\gamma_kt-A'))\vspace{3pt}\\
\qquad\qquad\qquad\displaystyle\ge-\frac{N-1}{|x-x_0|}\phi_k'(|x-x_0|-\gamma_kt-A')\vspace{3pt}\\
\qquad\qquad\qquad\ge0\eaa$$
since $\overline{f}_k\ge f$ in $[\eta_k,M]$ and $\phi'_k<0$ in $\R$. The maximum principle then implies that
$$u(t,x)\le\overline{u}(t,x)\le\phi_k(|x-x_0|-\gamma_kt-A')\ \hbox{ for all }t\ge 0\hbox{ and }|x-x_0|\ge1.$$
Therefore, for all $t\ge 1/c''$, one has $\max_{|x-x_0|\ge c''t}u(t,x)\le\phi_k(c''t-\gamma_kt-A')\to\eta_k$ as $t\to+\infty$, since $c''>\gamma_k$ and~$\phi_k(+\infty)=\eta_k$. One then infers that
$$\limsup_{t\to+\infty}\max_{|x-x_0|\ge c''t}u(t,x)\le\eta_k\le\epsilon.$$
Since~$\epsilon>0$ and~$c''>c$ were arbitrary, one concludes from the definition~\eqref{deftau1} of $\xi(t)$ that $\limsup_{t\to+\infty}\xi(t)/t\le c$. Together with~\eqref{liminf}, the inequality~\eqref{spreading} follows. The proof of Theorem~\ref{th1} is thereby complete.\hfill$\Box$


\section{Proof of Corollaries~\ref{cor1} and~\ref{cor3}}\label{sec5}

As already emphasized in Section~\ref{sec:2}, Corollary~\ref{cor2} follows directly from Theorem~\ref{th1}, while Corollaries~\ref{cor4}-\ref{cor7} follow from Corollaries~\ref{cor2} and~\ref{cor3} and the results of Section~\ref{sec:1}. It just remains to complete the proof of Corollaries~\ref{cor1} and~\ref{cor3}.

\begin{proof}[Proof of Corollary $\ref{cor1}$]
From the observations in the paragraph before Corollary~\ref{cor1} and from the assumptions made in Corollary~\ref{cor1}, there is a unique solution $\phi_0$ of~\eqref{steady} such that $\max_\R\phi_0=\phi_0(0)=\beta$ (and $\phi_0$ is then even and decreasing in $|x|$). Furthermore, $0<m_{\phi_0}<\beta$ and $F<0$ in $(0,m_{\phi_0}]$. All conditions of Theorem~\ref{th1} are then fulfilled.

Let now $u$ be a positive bounded solution of~\eqref{eq:ACN} satisfying~\eqref{localized}. As in~\eqref{defx0}, there is~$x_0\in\R$ such that, for every $t\in\R$, the function $x\mapsto u(t,x+x_0)$ is even in $x$ and decreasing in~$|x|$. Since $\phi:=\phi_0(\cdot-x_0)$ is the only solution of~\eqref{steady} which is symmetric with respect to~$x_0$, it follows from property~\eqref{conv1} of Theorem~\ref{th1} that
$$\|u(t,\cdot)-\phi\|_{L^\infty(\R)}\to0\hbox{ as $t\to-\infty$}.$$
Lastly, if alternative~(ii) holds in the conclusion of Theorem~\ref{th1}, then again by the uniqueness of the symmetric (with respect to $x_0$) solution $\phi$ of~\eqref{steady}, one has $\|u(t,\cdot)-\phi\|_{L^\infty(\R)}\to0$ as~$t\to+\infty$. It then follows from~\eqref{Eutl},~\eqref{Ephil} and~\eqref{Ephi} that the action $E[u(t,\cdot)]$ defined by~\eqref{defE} has the same limit $E[\phi]$ as $t\to\pm\infty$. From~\eqref{lyapounov}, one concludes that $u_t\equiv0$ in~$\R\times\R$, that is,~$u(t,x)\equiv\phi(x)$ in~$\R\times\R$. The proof of Corollary~\ref{cor1} is thereby complete.
\end{proof}

\begin{proof}[Proof of Corollary $\ref{cor3}$]
As in~\eqref{defx0}, there is a point $x_0\in\R^N$ such that, for every $t\in\R$, the function $x\mapsto u(t,x+x_0)$ is radially symmetric, and decreasing in $|x|$. From the assumptions made in Corollary~\ref{cor3}, the set of solutions of~\eqref{steady} which are radially symmetric with respect to the point $x_0$ is discrete. Since the $\alpha$-limit set of $u$ is non-empty, connected and made of solutions of~\eqref{steady} which are radially symmetric with respect to $x_0$ (from the proof of~\eqref{conv1} in Section~\ref{sec41}), it follows that there is $\phi\in\mathcal{E}$ such that
$$\|u(t,\cdot)-\phi\|_{L^\infty(\R^N)}\to0\hbox{ as $t\to-\infty$}.$$
In case alternative~(ii) of the conclusion of Theorem~\ref{th1} occurs, then there is $\phi'\in\mathcal{E}$ such that~$\|u(t,\cdot)-\phi'\|_{L^\infty(\R^N)}\to0$ as $t\to+\infty$. Furthermore, if $\phi=\phi'$, then, as in the above proof of Corollary~\ref{cor1}, one has $E[u(t,\cdot)]\to E[\phi]=E[\phi']$ as $t\to\pm\infty$, hence $u_t\equiv0$ and~$u(t,x)\equiv\phi(x)\equiv\phi'(x)$ in $\R\times\R^N$. 
\end{proof}


\section*{Acknowledgements}

This work has been carried out in the framework of Archim\`ede Labex of Aix-Marseille University. The project leading to this publication has received funding from Excellence Initiative of Aix-Marseille University~-~A*MIDEX, a French ``Investissements d'Avenir'' programme, and from the ANR NONLOCAL project (ANR-14-CE25-0013). The second author was partially supported by JSPS KAKENHI Grant Numbers JP16KT0022 and JP20H01816 and also thanks Aix-Marseille University for the warm hospitality during the visit. The authors are grateful to Thomas Giletti, Louis Jeanjean, Peter Pol\'a{\v{c}}ik, and Luca Rossi for helpful discussions and useful references.


\end{document}